\newcommand{\aquatre}{
\stockaiv\pageaiv
\settypeblocksize{22cm}{13cm}{*}
\setlrmargins{4cm}{*}{1}
\setmarginnotes{0pt}{0pt}{0pt}
\setulmargins{3.5cm}{*}{1}
\setheadfoot{3\baselineskip}{3\baselineskip}
\setheaderspaces{2\baselineskip}{*}{1}
\checkandfixthelayout
}
\newcommand{\addpoint}[1]{#1\ ---\ }
\newtheoremstyle{thm}
{1.5ex plus .3ex minus .1ex}
{1ex plus .3ex minus .1ex}
{\itshape}
{}
{\sffamily}
{---}
{0em}
{$\bullet$\hbox{\ }#1\hbox{\ }#2}
\theoremstyle{thm}
\newtheorem{definition}{Definition}[section]
\newtheorem{theorem}[definition]{Theorem}
\newtheorem{lemma}[definition]{Lemma}
\newtheorem{proposition}[definition]{Proposition}
\newtheorem{corollary}[definition]{Corollary}
\newtheoremstyle{note}
{1ex plus .3ex minus .1ex}
{1ex plus .3ex minus .1ex}
{}
{}
{\itshape}
{.}
{1em}
{}
\theoremstyle{note}
\newtheorem{example}[definition]{Example}
\newtheorem{remark}[definition]{Remark}
\newtheorem{discussion}[definition]{Discussion}
\newtheoremstyle{q}
{1ex plus .3ex minus .1ex}
{1ex plus .3ex minus .1ex}
{}
{}
{\bfseries}
{:}
{1em}
{}
\theoremstyle{q}
\newcommand{\M}{\mathcal {M}}
\newcommand{\R}{\mathcal {R}}
\newcommand{\W}{\mathcal {W}}
\newcommand{\I}{\mathcal {I}}
\newcommand{\len}[1]{|#1|}
\newcommand{\up}[1]{\uparrow\! #1}
\newcommand{\Up}[1]{\Uparrow\! #1}
\newcommand{\RR}{\mathbb{R}}
\newcommand{\F}{\mathcal{F}}
\newcommand{\G}{\mathcal{G}}
\newcommand{\T}{\mathcal{T}}
\renewcommand{\H}{\mathcal{H}}
\newcommand{\Fbar}{\overline{\F}}
\newcommand{\Cbar}{\overline{C}}
\renewcommand{\SS}{\mathcal{S}}
\newcommand{\ftilde}{\widetilde{f}}
\newcommand{\gtilde}{\widetilde{g}}
\newcommand{\htilde}{\widetilde{h}}
\newcommand{\ve}{\varepsilon}
\newcommand{\tq}{\,:\,}
\newcommand{\un}[1]{\mathbf{1}_{\{#1\}}}
\newcommand{\Mbar}{\overline\M}
\newcommand{\BM}{\partial\M}
\newcommand{\DSC}{\text{\normalfont\textsf{DSC}}}
\newcommand{\DSCP}{\ensuremath{\DSC^+}}
\newcommand{\C}{\mathcal{C}}
\newcommand{\Cstar}{\mathfrak{C}}
\DeclareMathOperator{\Id}{Id}
\newcommandx*{\seq}[3][3=0]{(#1_{#2})_{#2\geqslant#3}}
\newcommand{\slgb}{$\sigma$-algebra}
\newcommand{\myrho}[2]{\rho[#1,#2]}
\newcommand{\mymu}[2]{\mu[#1,#2]}
\newcommand{\Mob}{\mathsf{M}}
\newcommand{\iid}{\textsl{i.i.d.}}
\newcommand{\as}{\text{a.s.}}
\newcommand{\esp}{\mathbb{E}}
\newcommand{\height}[1]{\tau(#1)}
\newcommand{\ie}{\textsl{i.e.}}
\begin{document}

\begin{center}
{\huge\bfseries\sffamily
Ergodic properties of\\ concurrent systems}
\par\medskip
  \begin{minipage}[t]{.35\linewidth}
    \begin{center}
     {\Large Samy Abbes}\\[.5em]
    {\small Université Paris Cité\\
IRIF CNRS UMR 8243\\
8 place Aurélie Nemours\\
F-75013 Paris, France\\
\texttt{abbes@irif.fr}\\
ORCID: 0003-3382-3647}
\end{center}
\end{minipage}
\qquad
\begin{minipage}[t]{.35\linewidth}
  \begin{center}
{\Large Vincent Jugé}\\[.5em]
\small Université Gustave Eiffel\\
CNRS, LIGM\\
F-77454 Marne-la-Vallée, France\\
\texttt{vincent.juge@univ-eiffel.fr}\\
ORCID: 0003-0834-9082
  \end{center}
\end{minipage}
\end{center}

\bigskip
\begin{abstract}
A concurrent system is defined as a monoid action of a trace monoid on a finite set of states.
Concurrent systems represent state models where the state is distributed and where state changes are local.

Starting from a spectral property on the combinatorics of concurrent systems, we prove the existence and uniqueness of a Markov measure on the space of infinite trajectories relatively to any weight distributions.
In turn, we obtain a combinatorial result by proving that the kernel of the associated Möbius matrix has dimension~$1$;
the Möbius matrix extends in this context the Möbius polynomial of a trace monoid.

We study ergodic properties of irreducible concurrent systems and we prove a Strong law of large numbers.
It allows us to introduce the speedup as a measurement of the average amount of concurrency within infinite trajectories.
Examples are studied.

\medskip
\noindent\textbf{Keywords:} concurrent systems; Markov measures; ergodicity; partial order semantics
\end{abstract}

\section{Introduction}
\label{sec:introduction}

In this paper we build on previous results to study the theory of probabilistic concurrent systems.
Our main goal is to obtain ergodicity results, in particular a Strong law of large numbers with an adequate notion of convergence.
The latter point is central since ``probabilistic concurrent processes'' are not given as standard probabilistic processes.
Indeed, the very nature of concurrent systems is the absence of a global clock at the scale of the system.

We first recall some background on the theory of trace monoids, on concurrent systems and on their probabilistic counterparts.
Then we establish a spectral property which will play a central role.
We study the structure of probabilistic valuations; probabilistic valuations describe Markov measures for concurrent systems.
Then we apply these results to the study of ergodic properties, deriving in particular a Strong law of large numbers.
Examples illustrating some specific aspects of the theory are given in a separate section.

Some of the constructions and results of this paper are generalizations of statements proved in earlier works~\cite{abbes2019,abbes21}.

\paragraph{Motivations and framework.}

Markov chains are, among other things, a powerful and versatile model for describing the evolution of state systems through time.
Today, a number of important real-life systems have their state distributed over space.
As a result, the evolution of their state through time is most adequately captured by a \emph{partially ordered} collection of time instants, rather than by a \emph{sequence} of time instants.
This renders the concurrency of local changes of state, but does not comply well with Markov chains models.

Before we come to their probabilistic part, let us first quickly describe the underlying dynamics of the models that we consider.
The basic dynamics is handled by a \emph{trace monoid}, alternatively called in the literature a \emph{heap monoid} or a \emph{free partially commutative monoid}~\cite{diekert95,diekert90,cartier69,viennot86}; it also corresponds to the monoid counterpart of a \emph{right-angled Artin-Tits group}.
Its elements, called \emph{traces}, are sequences of actions modulo commutation of concurrent actions;
the concurrency feature of the model is obtained through the commutativity of some generators of the monoid.
Given a trace monoid~$\M$ and a set of states~$X$, we assume given a right monoid action~$X\times\M\to X$, which may be only partially defined.
The whole data defines a \emph{concurrent system}, a model that combines the intrinsic concurrency features of trace monoids with a natural notion of state.

\paragraph{Combinatorial encoding.}

A key point is the existence of a normal form for traces~\cite{cartier69}.
As a result, the traces identify with the paths of some finite digraph (directed graph).
This combinatorial encoding of elements is a feature shared by all monoids with general Artin-Tits presentations~\cite{dehornoy14}.
The interplay between the intrinsic properties of the concurrent system and the properties of its combinatorial encoding is one of the most interesting aspects of this theory.
We insist on that point on several occasions throughout the paper.

The probabilistic dynamics of a concurrent system is defined through a probability measure on the space of infinite traces.
Specifically, we study the notion of \emph{Markov measure}, which must satisfy an intrinsic chain rule;
``intrinsic'' meaning ``without reference to the combinatorial encoding of traces''.
The corresponding dynamics of the system is \emph{not} sequential: it is not the result of a sequence of random actions.

The mere existence of Markov measures is not obvious;
in particular, they admit natural probabilistic parameters subject to polynomial normalization conditions.

It is known that a Markov measure induces on the combinatorial encoding of traces a Markov chain dynamics, namely the \emph{Markov chain of state-and-cliques}~\cite{abbes2019}.
Yet, even for an irreducible concurrent system, the digraph that supports the representation of its trajectories may very well be non-strongly connected, and in particular it may have several final components.
This is intriguing, and it is also an obstacle to transferring in a straightforward way ergodicity results from the Markov chain of state-and-cliques (which is not ergodic in general), to the concurrent process itself (which is expected to be ergodic in some sense).

\paragraph{Contributions and outline of the paper.}

We build on previous results on probabilistic concurrent systems; hence, the characterization of a Markov measure, the existence of the Markov chain of state-and-cliques and its characteristics are recalled without proof.

After the background on trace monoids and concurrent systems has been recalled in Section~\ref{sec:trace-monoids}, we prove in Section~\ref{sec:spectr-prop-irred} a spectral property for irreducible concurrent systems (Theorem~\ref{thr:1}).
It is combined in Section~\ref{sec:constr-prob-valu} with some properties of \emph{reducible} non-negative matrices to derive information on the combinatorial encoding.
In particular, we show that the Markov chain of state-and-cliques actually lives on a sub-digraph of the encoding digraph.
Hence, some of the state-and-cliques are immaterial from the probabilistic point of view---they are called \emph{non-stable}.
Interestingly, these non-stable state-and-cliques do not depend on the particular Markov measure considered (Corollary~\ref{cor:3}).

An existence and uniqueness result on Markov measures for concurrent systems is proved (Theorem~\ref{thr:2}).
Any Markov measure is shown to be the limit of Boltzmann-like probability distributions.
With the existence and uniqueness result come a series of corollaries; in particular, the Möbius matrix of the concurrent system (which extends to concurrent systems the classical notion of Möbius polynomial) has a one-dimensional kernel generated by a positive vector (Theorem~\ref{thr:3}).

Building on the previous results, Section~\ref{sec:ergod-prop-irred} is devoted to the ergodic properties of irreducible concurrent systems.
We proceed in two steps:
we first prove that shift-invariant functions are almost surely constant (Theorem~\ref{thr:4});
then we derive a Strong law of large numbers (Theorem~\ref{thr:6}).
Again, a challenge is to formulate the ergodic properties in an intrinsic manner, without reference to the combinatorial representation of trajectories.
A key tool is a custom notion of stopping time, well suited for concurrent systems.

The Strong law of large numbers that we obtain is formulated through a smooth notion of convergence.
We consider \emph{test functions}, that is, functions which are either non-decreasing and sub-additive along finite traces, or simply additive along finite traces.
The ratios with the length of traces define the ergodic means of our test functions.
We show that, for almost surely every trajectory, the ergodic means converge toward a constant, whatever the ``way'' of convergence of finite traces toward their limit infinite trajectory.

As an application, we introduce the notion of \emph{speedup} for an irreducible concurrent system.
It  characterizes the amount of parallelism observed on average along an infinite trajectory of the given concurrent system.
Almost surely, this quantity does not depend on the infinite trajectory.
The stationary probability distribution of the Markov chain of state-and-cliques provides a way to compute the speedup (Proposition~\ref{prop:2}).

Throughout the paper we introduce small examples where all the constructions are illustrated.
In Section~\ref{sec:applications} we also introduce more elaborated examples.
The first one, in~\S~\ref{sec:an-example-with}, proves that
the combinatorial encoding of an irreducible concurrent system can have several non-isomorphic basic components.
Finally, we explicitly determine in~\S~\ref{sec:doubled-trace-monoid} the uniform measure for a family of models which directly originate from concurrency theory.

\section{Trace monoids and concurrent systems}
\label{sec:trace-monoids}

\subsection{Trace monoids (\cite{cartier69,viennot86,diekert90,dehornoy14})}
\label{sec:trace-monoids-1}

\paragraph{Algebraic description of traces.}

A \emph{trace monoid} is a presented monoid of the form:
\begin{gather*}
\M(\Sigma,I)=\langle\Sigma\,|\, \forall (a,b)\in I\quad ab=ba\rangle
\end{gather*}
where~$\Sigma$ is a finite set of \emph{letters} and~$I$ is an irreflexive and symmetric binary relation on~$\Sigma$.
Hence,~$\M=\M(\Sigma,I)$ is the quotient monoid~$\M=\Sigma^*/\I$ where~$\I$ is the smallest congruence on~$\Sigma^*$ generated by pairs of the form~$(ab,ba)$ for~$(a,b)$ ranging over~$I$.

$\M$~is \emph{trivial} if~$\Sigma=\emptyset$, \emph{non-trivial} otherwise.

Let~$R=(\Sigma\times\Sigma)\setminus I$.
The monoid~$\M$ is \emph{irreducible} if the graph~$(\Sigma,R)$ is connected;
this is equivalent to saying that~$\M$ is not isomorphic to a direct sum of the form~$\M_1\oplus\M_2$, where~$\M_1$ and~$\M_2$ are two non-trivial trace monoids.

Elements of~$\M$ are called \emph{traces}.
The unit element of~$\M$ is the \emph{empty trace}, denoted by~$\ve$.
The \emph{length} of a trace~$x$, denoted by~$\len x$, is the length of every representative word of~$x$.

The left division relation on~$\M$ is a partial order, denoted by~$\leqslant$, and defined by~$x\leqslant y$ if and only if there exists a trace~$z \in \M$ such that~$y = x z$.
Since trace monoids are left-cancellative, when this trace~$z$ exists, it is unique, and it is denoted by~$x^{-1} y$.

Trace monoids are also right-cancellative, and therefore, when~$x$ right-divides~$y$, the unique trace~$z \in \M$ such that~$y=x z$ is denoted by~$y x^{-1}$.

Furthermore, both the left and right divisibility orders satisfy the two following properties:
\begin{compactenum}[(P1)] 
\item\label{item:1} Every non-empty set of traces has a greatest lower bound.
\item\label{item:2} Every bounded, non-empty, finite set of traces has a least upper bound.
\end{compactenum}

\paragraph{Combinatorial description of traces.}

A \emph{clique} of~$\M$ is a trace of the form~$a_1\ldots a_k$, where the~$a_i$s are letters such that~$(a_i,a_j)\in I$ for all~$i \neq j$.
The empty trace is the only clique of length~$0$, and the letters of~$\Sigma$ form the cliques of length~$1$.
There exist cliques of length greater than~$1$ if and only if~$\M$ is not a free monoid.
Let~$\C$ denote the set of cliques, and~$\Cstar$ the set of non-empty cliques, which are both finite sets.

By construction, a clique is entirely determined by the letters it carries; since a clique~$c$ is the product of its letters, regardless of their order.
It is thus legitimate to identify cliques with subsets of~$\Sigma$, and to consider set-theoretic operations on cliques.

A pair~$(c_1,c_2)\in\C\times\C$ is \emph{normal} if, for every letter~$b\in c_2$, there is a letter~$a\in c_1$ such that~$(a,b)\notin I$; this is denoted by~$c_1\to c_2$.
For every trace~$x\in\M$, there exists a unique integer~$n\geqslant 0$ and a unique sequence of non-empty cliques~$(c_1,\ldots,c_n)$ such that~$x=c_1\cdots c_n$ and~$c_i\to c_{i+1}$ for all~$i\in\{1,\ldots,n-1\}$.
The sequence~$(c_1,\ldots,c_n)$ is the \emph{Cartier-Foata normal form}, or \emph{normal form} for short, of~$x$.
The integer~$n$ is the \emph{height} of~$x$, denoted by~$n=\height x$.

If~$x,y\in\M$ are two traces such that~$x\leqslant y$, the normal form of~$x$ is not, in general, a prefix of the normal form of~$y$, and the normal form of~$x^{-1}y$ is not a suffix of the normal form of~$y$.
For instance, in~$\M=\langle a,b,c\,|\, ab=ba\rangle$,~$x=a$ and~$y=ab$ satisfy~$x\leqslant y$, but both traces have height~$1$; the normal form of~$x$, which if~$(a)$, is not a prefix of the normal form of~$y$, which is~$(ab)$.

In order to characterize the relation~$x\leqslant y$ on the normal forms of~$x$ and~$y$, it is useful to introduce this notion: two cliques~$c$ and~$d$ are \emph{parallel}, which is denoted by~$c\parallel d$, whenever, as subsets of~$\Sigma$, they satisfy:~$c\cap d=\emptyset$ and~$c\cup d\in\C$.
In this case, the monoid concatenation~$cd$ is then the clique~$c\cup d$.

Now, if~$x$ and~$y$ are two traces with normal forms~$(c_1,\ldots,c_n)$ and~$(d_1,\ldots,d_p)$, respectively, then~$x\leqslant y$ if and only if (\cite[Lemma~4.1]{abbes15}):~$n\leqslant p$, and there exist cliques~$\gamma_1,\ldots,\gamma_n$ such that, for all~$i\in\{1,\ldots,n\}$:
\begin{gather}
  \label{eq:75}
  \gamma_i\parallel c_i,\ldots,c_n\text{\quad and\quad}d_i=c_i\gamma_i
\end{gather}

As a consequence, let~$x$ be a trace, of height~$n$.
For every trace~$z$ with height~$\height z\geqslant n$ and with normal form~$(d_1,\ldots,d_p)$, let~$z_n=d_1\cdots d_n$.
Then we have:
\begin{gather}
  \label{eq:71}
  x\leqslant z\iff x\leqslant z_n
\end{gather}

\paragraph{Generalized normal form of traces and boundary at infinity of the trace monoid.}

For each trace~$x$ with normal form~$(c_1,\ldots,c_n)$, let us define~$c_i$ for~$i>n$ by putting~$c_i=\ve$.
The infinite sequence~$\seq ci[1]$ thus obtained is the \emph{generalized normal form} of~$x$.
By construction, it satisfies:
\begin{inparaenum}[1:]
  \item~$c_i\to c_{i+1}$ for all~$i\geqslant1$; and
  \item there exists an integer~$N$ such that~$c_N=\ve$ (and then, necessarily,~$c_j=\ve$ for all~$j\geqslant N$).
\end{inparaenum}

An infinite sequence of cliques~$x=\seq ci[1]$ satisfying~$c_i\to c_{i+1}$ for all~$i\geqslant1$ is called a \emph{generalized trace}.
We define:~$C_i(x)=c_i$ for all integers~$i\geqslant1$.
Consider the digraph~$(\C,\to)$, with cliques as vertices and with edge relation defined by all normal pairs of cliques.
Then each generalized trace identifies in a unique way with an infinite path in the digraph~$(\C,\to)$

The set of generalized traces is denoted by~$\Mbar$.
As a closed subset of the infinite product~$(\C\times\C\times\cdots)$,~$\Mbar$~is a compact metric space.
We identify the monoid~$\M$ with a subset of~$\Mbar$; namely, each trace~$x\in\M$ identifies with its generalized normal form.

The set~$\BM=\Mbar\setminus\M$ is the \emph{boundary at infinity} of~$\M$.
Its elements are called \emph{infinite traces}.
By contrast with traces, an infinite trace is a sequence~$\omega=\seq ci[1]$ of \emph{non-empty} cliques such that~$c_i\to c_{i+1}$ for all~$i\geqslant1$.

If~$x\in\M$ and~$u\in\Mbar$, we define the concatenation~$xu$ as follows.
Let~$u=(c_i)_{i\geqslant1}$ with~$c_i\in\C$ for all~$i\geqslant1$.
Consider the sequence~$\seq x n[1]$ defined by~$x_n=xc_1\dots c_n$.
Let~$(d_{n,i})_{i\geqslant1}$ be the generalized normal form of~$x_n$.
For each~$i\geqslant1$, the sequence~$(d_{n,i})_{n\geqslant1}$ is eventually constant, say equal to~$d_i$, and then~$d_i\to d_{i+1}$ holds for all~$i\geqslant1$.
By definition, the concatenation~$xu$ is the generalized trace~$(d_i)_{i\geqslant1}$.
This extends the monoid concatenation~$xu$ if~$u\in\M$.

Let~$\xi,\omega\in\Mbar$.
We define a relation $\leqslant$ on $\Mbar$ by:
\begin{gather}
  \label{eq:73}
  \xi\leqslant\omega\iff\bigl(\forall n\geqslant1\quad C_1(\xi)\cdots C_n(\xi)\leqslant C_1(\omega)\cdots C_n(\omega)\bigr)
\end{gather}

The following proposition gathers results that mostly belong to folklore and derive essentially from the equivalence~\eqref{eq:71}.

\begin{proposition}
  \label{prop:3}
  For every generalized trace~$\xi$ and every integer~$n\geqslant0$, let~$\xi_n$ denote the trace of height~$n$ defined by~$\xi_n=C_1(\xi)\cdots C_n(\xi)$.
  \begin{compactenum}
  \item\label{item:qwpoij} The relation~$\leqslant$ on~$\Mbar$ defined by~\eqref{eq:73} is a partial ordering that extends the left-divisibility order~$(\M,\leqslant)$.
  \item\label{item:qwwqwq9} If~$u\in\M$ is of height~$n$ and if~$\xi\in\Mbar$, then the following statements are equivalent:
    \begin{inparaenum}[(i)]
    \item~$u\leqslant\xi$;
    \item~$u\leqslant\xi_n$;
    \item there exists~$\zeta\in\Mbar$ such that~$\xi=u\zeta$.
    \end{inparaenum}
  \item\label{item:4987} Every bounded subset~$Y$ of\/~$(\Mbar,\leqslant)$ has a least upper bound in\/~$(\Mbar,\leqslant)$, denoted by~$\bigvee Y$.
  \item For every~$\xi\in\Mbar$:\quad~$\xi=\bigvee\{x\in\M\tq x\leqslant\xi\}=\bigvee\{\xi_n\tq n\geqslant1\}$.
  \end{compactenum}
\end{proposition}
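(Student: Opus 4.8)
The plan is to prove the four assertions in the order they are stated, since each one feeds into the next. For \eqref{item:qwpoij}, reflexivity and antisymmetry of~$\leqslant$ on~$\Mbar$ are immediate from the definition~\eqref{eq:73} together with the corresponding properties of~$(\M,\leqslant)$; transitivity follows by applying transitivity in~$\M$ level by level. To see that this extends the left-divisibility order, I would take~$x,y\in\M$ and show that~$x\leqslant y$ in~$\M$ is equivalent to the condition on all truncations~$x_n\leqslant y_n$: the forward direction uses~\eqref{eq:71}, and the converse follows by taking~$n$ larger than both heights, at which point~$x_n=x$ and~$y_n=y$.

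For \eqref{item:qwwqwq9}, the equivalence (i)~$\Leftrightarrow$~(ii) is exactly~\eqref{eq:71} read through the definition~\eqref{eq:73} — unfolding~\eqref{eq:73} for~$u$ of height~$n$ shows~$u\leqslant\xi$ reduces to~$u\leqslant\xi_n$ because the truncations of~$u$ past level~$n$ add nothing. For (ii)~$\Rightarrow$~(iii): from~$u\leqslant\xi_n$ in~$\M$ one gets a trace~$v=u^{-1}\xi_n$, and then I would set~$\zeta$ to be the generalized trace obtained by concatenating~$v$ with the tail cliques~$(C_i(\xi))_{i>n}$ and reading off the generalized normal form; the concatenation construction recalled just before the proposition, combined with~\eqref{eq:75}, guarantees~$u\zeta=\xi$. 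The direction (iii)~$\Rightarrow$~(i) is the easy one: if~$\xi=u\zeta$ then~$u\leqslant u\zeta=\xi$ by monotonicity of left-concatenation, which itself is visible from the way the concatenation~$u\zeta$ was defined as a limit of the normal forms of~$u\,C_1(\zeta)\cdots C_n(\zeta)$.

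For \eqref{item:4987}, let~$Y\subseteq\Mbar$ be bounded, say by~$\omega$. I would construct~$\bigvee Y$ levelwise. For each~$n$, the set~$\{\,\xi_n : \xi\in Y\,\}$ is a non-empty set of traces, each of height~$\leqslant n$, all bounded above (by~$\omega_n$, using part~\eqref{item:qwwqwq9} applied to the truncation), hence by property~(P2) it has a least upper bound~$y^{(n)}$ in~$\M$. The sequence~$(y^{(n)})_n$ is non-decreasing for~$\leqslant$ on~$\M$ and each~$y^{(n)}\leqslant\omega_n$, so~$y^{(n)}\leqslant\omega$; using~\eqref{eq:75} one checks that~$y^{(n)}$ is a truncation-compatible family, i.e.\ that the generalized trace~$\eta$ whose $n$-th truncation is~$y^{(n)}$ is well defined, with~$\eta\leqslant\omega$. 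That~$\eta$ is an upper bound of~$Y$ and the least one follows by reading~\eqref{eq:73} level by level and using minimality of each~$y^{(n)}$. Finally, the last item is a direct consequence: the set~$\{x\in\M : x\leqslant\xi\}$ is bounded by~$\xi$ and contains each~$\xi_n$, and for every~$n$ we have~$\xi_n\leqslant(\bigvee_m\xi_m)$ and conversely~$\bigvee_m\xi_m\leqslant\xi$ because each~$\xi_m\leqslant\xi$; comparing truncations shows the two generalized traces agree.

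The main obstacle I anticipate is the bookkeeping in \eqref{item:4987}: showing that the levelwise least upper bounds~$y^{(n)}$ really do assemble into a generalized trace, i.e.\ that~$y^{(n)}$ is the height-$n$ truncation of~$y^{(n+1)}$. This is where one must use the structural characterization~\eqref{eq:75} of the divisibility order on normal forms carefully, rather than just abstract lattice properties, because least upper bounds in~$\M$ do not interact with truncation in a completely transparent way (as the $x=a$, $y=ab$ example in the text warns). Everything else is routine unfolding of the definitions~\eqref{eq:71}, \eqref{eq:73} and~\eqref{eq:75}.
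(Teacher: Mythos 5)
The paper does not actually prove Proposition~\ref{prop:3}; it states that the proposition ``gathers results that mostly belong to folklore'' and derive from~\eqref{eq:71}. So there is no in-paper proof to compare against, and what you have written has to be judged on its own.

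Your plan is sound and all four items go through along the lines you describe. Two remarks, one minor and one on the step you yourself flag as the main obstacle. Minor: in item~\ref{item:qwpoij} you cite~\eqref{eq:71} for the implication $x\leqslant y\Rightarrow(\forall n\ x_n\leqslant y_n)$, but~\eqref{eq:71} only gives $x\leqslant y\iff x\leqslant y_{\height x}$; to get $x_m\leqslant y_m$ for the smaller values $m<\height x$ one really has to go back to~\eqref{eq:75} and observe that the witnessing cliques $\gamma_i$ with $\gamma_i\parallel c_i,\ldots,c_n$ still serve, restricted to indices $\leqslant m$. Also, in item~\ref{item:4987} you invoke~(P\ref{item:2}), which is stated for \emph{finite} bounded sets; the set $\{\xi_n\tq\xi\in Y\}$ is indeed finite because every element has height $\leqslant n$, hence length $\leqslant n|\Sigma|$, and $\M$ is finitely generated — worth saying explicitly.

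The substantive point is the truncation-compatibility of the family $(y^{(n)})_n$, i.e.\ that $(y^{(n+1)})_n=y^{(n)}$. You are right both that this is the crux and that it follows from~\eqref{eq:75}. The clean way to see it is to prove, for $x,y\in\M$ bounded above with normal forms $(a_1,\ldots,a_p)$ and $(b_1,\ldots,b_r)$ (padded with $\ve$), that the normal form of $x\vee y$ is exactly the clique-by-clique union $(a_1\cup b_1,\,a_2\cup b_2,\ldots)$. Each union $a_i\cup b_i$ is a clique because both sit inside the $i$-th clique of any common upper bound; the normality $a_i\cup b_i\to a_{i+1}\cup b_{i+1}$ is inherited from $a_i\to a_{i+1}$ and $b_i\to b_{i+1}$; the divisibility $x\leqslant(a_1\cup b_1)\cdots$ is checked via~\eqref{eq:75} with $\gamma_i=b_i\setminus a_i$ after noting $\gamma_i\subseteq\alpha_i$ where $\alpha_i$ is the slack from $x\leqslant z$; and minimality follows because any common upper bound $w$ satisfies $a_i\cup b_i\subseteq C_i(w)$ and the slacks $C_i(w)\setminus(a_i\cup b_i)=\alpha'_i\cap\beta'_i$ again have the parallelism required by~\eqref{eq:75}. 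Once this is in hand, $(x\vee y)_n=x_n\vee y_n$, the identity extends by induction to finite joins, and $(y^{(n+1)})_n=\bigvee_{\xi\in Y}(\xi_{n+1})_n=\bigvee_{\xi\in Y}\xi_n=y^{(n)}$ as you wanted. With this filled in, your argument is complete.
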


If a finite trace~$u$ and~$\xi\in\Mbar$ are such that~$u\leqslant\xi$, then the generalized trace~$\zeta$ such that~$\xi=u\zeta$ stated in point~\ref{item:qwwqwq9} above is unique; it is denoted by~$\zeta=u^{-1}\xi$.
In general, the cliques of~$\zeta$ are not obtained by a translation from the cliques of~$\xi$.

\paragraph{Combinatorics of trace monoids;
valuations.}

A \emph{valuation} on~$\M$ is a function~$\lambda:\M\to\RR_{>0}$ such that~$\lambda(x y)=\lambda(x)\lambda(y)$ for all~$x,y\in\M$.
Valuations are in bijection with finite families of positive numbers of the form~$(\lambda(a))_{a\in\Sigma}$.

A \emph{uniform valuation} is a valuation~$\lambda$ such that~$\lambda(a)$ is constant, say equal to~$t$, for~$a$ ranging over~$\Sigma$;
hence,~$\lambda(x)=t^{\len x}$.
The \emph{counting valuation} corresponds to~$t=1$.

Let~$G_\lambda(z)$ denote the generating series:
\begin{gather*}
G_\lambda(z)=\sum_{x\in\M}\lambda(x)z^{\len x}
\end{gather*}
and let~$\rho_\lambda$ be its radius of convergence.

The series~$G_\lambda(z)$, with non-negative coefficients, is actually a rational series;
its inverse is the \emph{$\lambda$-Möbius polynomial}:
\begin{gather}
\label{eq:1}
\mu_\lambda(z)=\sum_{\gamma\in\C}\lambda(\gamma)(-1)^{\len \gamma}z^{\len \gamma}
\end{gather}

If~$\M$ is non-trivial, the polynomial~\eqref{eq:1} has a unique complex root of smallest modulus;
this root is positive and lies in~$(0,1]$, and coincides with~$\rho$.
Furthermore, this root is simple if~$\M$ is irreducible~\cite{goldwurm00,krob03}.

Without further precision, the \emph{Möbius polynomial}~$\mu(z)$ of~$\M$ corresponds to the counting valuation;
it is thus given by~$\mu(z)=\sum_{\gamma\in\C}(-1)^{\len \gamma}z^{\len\gamma}$.

\paragraph{Möbius transform.}

The \emph{Möbius transform} of a function~$f:\C\to\RR$ is the unique function~$h:\C\to\RR$ such that:
\begin{gather}
\label{eq:2}
\forall\gamma\in\C\quad f(\gamma)=\sum_{\gamma'\in\C\tq\gamma\leqslant\gamma'}h(\gamma')
\end{gather}
It is given by:
\begin{gather}
\label{eq:3}
\forall\gamma\in\C\quad h(\gamma)=\sum_{\gamma'\in\C\tq\gamma\leqslant\gamma'}(-1)^{\len{\gamma'}-\len\gamma}f(\gamma')
\end{gather}

This is a particular instance of the general notion of Möbius transform introduced by Rota~\cite{rota64}.

\begin{remark}[connecting Möbius transform and Möbius polynomial]
Let~$\lambda$ be a valuation, and let~$f_z(x)=\lambda(x)z^{\len x}$, which is also a valuation.
Let~$h_z:\C\to\RR$ be the Möbius transform of~$f_z$.
Then the~$\lambda$-Möbius polynomial~\eqref{eq:1} coincides with~$\mu_\lambda(z)=h_z(\ve)$.
\end{remark}

\paragraph{Normal and visual cylinders.}

Let~$\Fbar$ be the Borel \slgb\ on~$\Mbar$.
By definition,~$\Fbar$~is generated by the \emph{normal cylinders}, of the following form:
\begin{gather}
\label{eq:32}
\Cbar_{x}= \{\xi\in\Mbar\tq C_i(\xi)=C_i(x),\ 1\leqslant i\leqslant \height x\}\quad(x\in\M)
\end{gather}

But~$\Fbar$ is also generated by the countable collections of \emph{full visual cylinders}:
\begin{gather*}
\Up x=\{\xi\in\Mbar\tq x\leqslant\xi\}\quad(x\in\M).
\end{gather*}

Similarly, the set~$\BM$ is equipped with its Borel \slgb~$\F$, which has two natural families of generators.
On the one hand, we have the collection of \emph{normal cylinders}:
\begin{gather}
\label{eq:6}
  C_{x}=\{\omega\in\BM\tq C_i(\omega)=C_i(x),\ 1\leqslant i\leqslant \height x\}\qquad(x\in\M)
\end{gather}
and on the other hand,  we have the collection of \emph{visual cylinders}:

\begin{gather*}
\up x=\{\omega\in\BM\tq x\leqslant\omega\}\qquad(x\in\M).
\end{gather*}

Let us justify that~$\F=\sigma\langle\up x,\ x\in\M\rangle$; the argument is similar to see that~$\Fbar=\sigma\langle\Up x,\ x\in\M\rangle$.
Let~$x\in\M$, say of height~$n=\height x$, let~$\omega\in\BM$ and let~$z=C_1(\omega)\ldots C_n(\omega)$.
Then, by Proposition~\ref{prop:3}, point~\ref{item:qwwqwq9}:~$x\leqslant\omega$ if and only if~$x\leqslant z$, and therefore:

\begin{align}
  \label{eq:64}
  \up x&=\bigsqcup_{\substack{z\in\M\tq\height z=\height x\\\ x\leqslant z}}C_z
  &C_x=\,\up x\setminus\Bigl(\bigcup_{\substack{z\in\M\tq \height z=\height x\\ z\leqslant x,\ z\neq x}}\up z
  \Bigr)
\end{align}

\paragraph{Memoryless measures and probabilistic valuations.}

Since the collection~$\{\up x\tq x\in\M\}\cup\{\emptyset\}$ is closed under finite intersections (this is called a~$\pi$-system) and generates the \slgb~$\F$, any probability measure~$m$ on~$\BM$ is entirely determined by the function~$\lambda:\M\to\RR_{\geqslant0},\quad x\mapsto m(\up x)$.

\begin{definition}
  \label{def:3}
  A probability measure~$m$ on~$\BM$ is a \emph{memoryless measure} if the function~$\lambda:\M\to\RR_{\geqslant0},\quad x\mapsto m(\up x)$ is a valuation.
Such a valuation~$\lambda$ is said to be \emph{probabilistic}.
\end{definition}

By definition, memoryless measures are thus the probability measures on~$\BM$ that satisfy~$m\bigl(\up(xy)\bigr)=m(\up x)m(\up y)$ for all~$x,y\in\M$.
Hence, this notion extends to trace monoids the standard notion of memoryless measures on infinite words.
It is a sort of ``coin tossing with partial commutations''.

Let~$\lambda$ be a valuation on~$\M$, and let~$h:\C\to\RR$ be the Möbius transform of~$\lambda$ (more precisely, of the restriction of~$\lambda$ to~$\C$).
Then~$\lambda$ is probabilistic if and only if (\cite[Th.~2.4]{abbes15}):
\begin{gather}
\label{eq:12}
h(\ve)=0,\qquad\text{and\quad}\forall\gamma\in\Cstar\quad h(\gamma)\geqslant0
\end{gather}

In particular, there is a unique valuation which is both uniform and probabilistic.
It is given by~$\lambda(x)=\rho^{\len x}$, where~$\rho$ is the root of smallest modulus of the Möbius polynomial~$\mu(z)=\sum_{\gamma\in\C}(-1)^{\len \gamma}z^{\len\gamma}$.
The associated memoryless measure is the \emph{uniform measure} on~$\BM$.

\paragraph{Markov chain of cliques.}

Let~$m$ be a memoryless measure on~$\BM$.
Then the sequence of cliques~$(C_i(\omega))_{i\geqslant1}$ of a random infinite trace~$\omega$ is a random process with values in the finite set~$\Cstar$.
It happens to have a rather simple structure.
Indeed (\cite[Th.~2.5]{abbes15}), the sequence~$\seq Ci[1]$ is a Markov chain, with initial distribution and transition matrix on~$\Cstar$ given as follows;
let~$\lambda$ be the probabilistic valuation associated to~$m$, and let~$h:\C\to\RR$ be the Möbius transform of~$\lambda$.
Furthermore, let~$g:\Cstar\to\RR_{>0}$ be the function defined by:
\begin{gather}
\label{eq:4}
\forall\gamma\in\Cstar\quad g(\gamma)=\sum_{\gamma'\in\Cstar\tq \gamma\to\gamma'}h(\gamma)
\end{gather}
Then the initial distribution of the chain is given by~$h$, which is indeed a probability distribution on~$\Cstar$; and
the transition matrix of the chain, say~$P$, is given by:
\begin{gather}
\label{eq:5}
\forall(\gamma,\gamma')\in\Cstar^2\quad P_{\gamma,\gamma'}=\un{\gamma\to\gamma'}\frac{h(\gamma')}{g(\gamma)}
\end{gather}

\begin{definition}
  \label{def:4}
  Let~$m$ be a memoryless probability measure on~$\BM$.
The \emph{Markov chain of cliques} is the probabilistic process~$(C_i)_{i\geqslant1}$ defined on the probability space~$(\BM,m)$, and with values in the finite set\/~$\Cstar$ of non-empty cliques.
\end{definition}

The Markov chain of cliques is not stationary in general.
It is aperiodic and irreducible, hence ergodic, when the associated monoid is irreducible.

\paragraph{Examples.}

The constant valuation~$f=1$ is probabilistic if and only if the monoid is commutative.
If~$\M=\Sigma^*$ is a free monoid, then~$\BM$ is the space of infinite sequences of letters;
memoryless measures coincide with memoryless measures in the usual sense on infinite sequences, and the Markov chain of cliques is actually an \iid\ sequence.

Let~$\M=\langle a,b,c,d\,|\,ac=ca,\ ad=da,\ bd=db\rangle$.
The Möbius polynomial is~$\mu(t)=1-4t+3t^2$ and~$\rho=1/3$.
Hence, the uniform measure on~$\BM$ gives equal weight~$1/3$ to the \emph{four} letters.
The corresponding valuation~$f(x)=(1/3)^{\len x}$ has the following Möbius transform~$h$, which gives the initial distribution of the Markov chain of cliques on~$\Cstar$:
\begin{gather*}
\begin{array}{cccccccc}
\text{clique~$\gamma$}&a&b&c&d&ac&ad&bd\\
f(\gamma)&1/3&1/3&1/3&1/3&1/9&1/9&1/9\\
h(\gamma)&1/9&2/9&2/9&1/9&1/9&1/9&1/9
\end{array}
\end{gather*}
The digraph of cliques is depicted on Fig.~\ref{fig:pojqwdoijasx}.

\begin{figure}
$$
\xymatrix{
*+[F]{\strut a}\POS!L!U(.3)\ar@(ul,u)[]!U\POS[]\ar@{<->}[rr]&&
*+[F]{\strut b}\ar@{<->}[rr]\POS!U(.3)!L\ar@(ul,u)[]!U&& 
*+[F]{\strut c}\ar@{<->}[rr]\POS!U(.3)!R\ar@(ur,u)[]!U&&
*+[F]{\strut d}\POS!R!U(.3)\ar@(ur,u)[]!U\\
&&*+[F]{\strut ac}
\POS!L\ar@(l,d)[]!D\POS[]
\POS!U!L\ar[ull]!D!R\POS[]\ar@{<->}[u]\POS!U!R\ar[urr]\POS[]\POS!U(.25)\ar[urrrr]!D(.25)\POS[]\ar@{<->}[rr]
&&*+[F]{\strut bd}
\POS!R\ar@(r,d)[]!D\POS[]
\POS!U(.25)\ar[ullll]!D(.25)\POS!U!L\ar[ull]\POS[]\ar@{<->}[u]\POS!U!R\ar[urr]!D!L\\
&&&*+[F]{\strut ad}
\POS!L\ar@/^1.3em/[uulll]!D
\POS!R\ar@/_1.3em/[uurrr]!D
\POS[]\ar@{<->}[ul]\ar@/_/[uul]!D!R
\POS[]\ar@{<->}[ur]\ar@/^/[uur]!D!L
\POS[]\POS!L!D(.3)\ar@(l,d)[]!D
}
$$
\caption{Digraph of cliques for the dimer model on four generators}
\label{fig:pojqwdoijasx}
\end{figure}
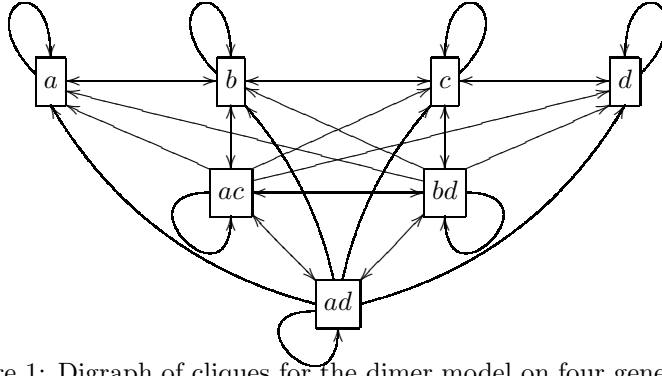

\subsection{Concurrent systems}
\label{sec:concurrent-systems}

\paragraph{Concurrent systems and their trajectories.}

We define a \emph{concurrent system} as a pair~$\SS=(\M,X)$ where~$\M$ is a trace monoid and~$X$ is a finite set of \emph{states}, together with a right monoid action~$X\sqcup\{\bot\}\times \M\to X\sqcup\{\bot\}$ where~$\bot$ is a special symbol not in~$X$, which is a sink state for the action.
Hence, we require~$\alpha\cdot\ve=\alpha$,~$\alpha\cdot(x y)=(\alpha\cdot x)\cdot y$ and~$\bot\cdot x=\bot$.

We say that~$\SS$ is \emph{trivial} if~$\alpha\cdot x=\bot$ for all~$\alpha\in X$ and~$x\in \M$;
and that~$\SS$ is \emph{non-trivial} otherwise.

We introduce the following notations, for~$(\alpha,\beta)\in X\times X$~:
\begin{align*}
\M_\alpha&=\{x\in \M\tq \alpha\cdot x\neq\bot\},
& 
                                                   \C_\alpha&=\C\cap\M_\alpha, \\
\M_{\alpha,\beta}&=\{x\in\M\tq\alpha\cdot x=\beta\},&
\C_{\alpha,\beta}&=\C\cap\M_{\alpha,\beta},
\\
\BM_\alpha&=\rlap{$\{\omega\in\BM\tq \text{for all~$x \in \M$, if~$x\leqslant\omega$, then~$x\in\M_\alpha$}\}$}
\end{align*}

A pair~$(\alpha,x)$ such that~$\alpha\in X$ and~$x\in\M_\alpha$ is called a \emph{trajectory} of~$\SS$, and a pair~$(\alpha,\omega)$ such that~$\alpha\in X$ and~$\omega\in\BM_\alpha$ is called an \emph{infinite trajectory}.

The concurrent system~$\SS$ is \emph{transitive} if~$\M_{\alpha,\beta}\neq\emptyset$ for every pair~$(\alpha,\beta)\in X\times X$.
This is similar to the transitivity property for group actions.

\begin{definition}
  \label{def:2} A concurrent system~$\SS=(\M,X)$ is \emph{irreducible}  if the three following conditions are fulfilled:
\begin{compactenum}[1:]
\item~$\SS$~is transitive and non-trivial;
\item the monoid~$\M$ is irreducible;
and
\item for every state~$\alpha$ and every letter~$a\in\Sigma$, there exists a trace~$u\in\M$ such that~$\alpha\cdot(u a)\neq\bot$.
\end{compactenum}
\end{definition}

The two first conditions are natural requirements for a notion of irreducibility;
the last condition is a \emph{liveness} property, since it requires that every letter~$a$ can eventually be played starting from any state~$\alpha$.
It is the spectral property stated below (Theorem~\ref{thr:1}) that fully justifies that this definition is relevant.

\paragraph{Example.}

A toy example of irreducible concurrent system is given as follows:
$\M=\langle a,b,c\,|\,ab=ba\rangle$,~$X=\{0,1,2\}$.
The action of~$\M$ on~$X$ is depicted on Figure~\ref{fig:qwqwdqaazaxcpow};
the absence of an arrow labeled by a letter corresponds to a forbidden action, hence~$0\cdot c=\bot$ and~$1\cdot c=\bot$;
we check that the depicted action of~$\Sigma^*$ on~$X$ induces indeed an action of~$\M$ on~$X$ since it satisfies~$(\alpha\cdot a)\cdot b=(\alpha\cdot b)\cdot a$ for all~$\alpha=0,1,2$.

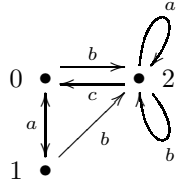
\begin{figure}
$$
\xymatrix{
\bullet\ar@<1ex>[r]^b\POS!L(2)\drop{0}&\bullet\ar@<.5ex>[l]^c\ar@(u,ur)[]^a\ar@(dr,d)[]^b\POS!R(2)\drop{2}\\
\bullet\ar@{<->}[u]^a\ar[ur]_b\POS!L(2)\drop{1}
}
$$
\caption{A concurrent system with~$X=\{0,1,2\}$ and~$\M=\langle a,b,c\,|\,ab=ba\rangle$}
\label{fig:qwqwdqaazaxcpow}
\end{figure}

\paragraph{Combinatorial encoding of trajectories.}

In order to give a faithful combinatorial encoding of trajectories, it is necessary to take into account not only the combinatorics of \emph{traces}, but also of the monoid action attached to the concurrent system.

\begin{definition}
A \emph{state-and-clique} is a pair~$(\alpha,c)\in X\times\Cstar$ such that~$\alpha\cdot c\neq\bot$.
A pair of state-and-cliques~$\bigl((\alpha,c),(\beta,d)\bigr)$ is \emph{normal}, denoted by~$(\alpha,c)\to(\beta,d)$, if~$\beta=\alpha\cdot c$ and~$c \to d$.

The digraph whose vertices are the state-and-cliques and whose edge relation coincides with~$\to$ is the \emph{directed graph of state-and-cliques}, or~\DSC.
\end{definition}

Infinite trajectories are in bijection with infinite sequences~$((\alpha_i,c_{i+1}))_{i\geqslant0}$ such that~$(\alpha_{i-1},c_i)\to(\alpha_i,c_{i+1})$ for all~$i\geqslant1$, \ie, with infinite paths in \DSC.
If~$\omega=(c_i)_{i\geqslant1}$ belongs to~$\BM_\alpha$, the corresponding infinite path in~$\DSC$ is~$(\alpha_i,c_{i+1})_{i\geqslant0}$, where~$\alpha_0=\alpha$ and~$\alpha_{i+1}=\alpha_i\cdot c_{i+1}$ for all~$i\geqslant0$.

\paragraph{Example.}

Building on the previous example of concurrent system depicted on Figure~\ref{fig:qwqwdqaazaxcpow}, the \DSC\ is represented on Figure~\ref{fig:qwdqqwfgkojngfffff}.

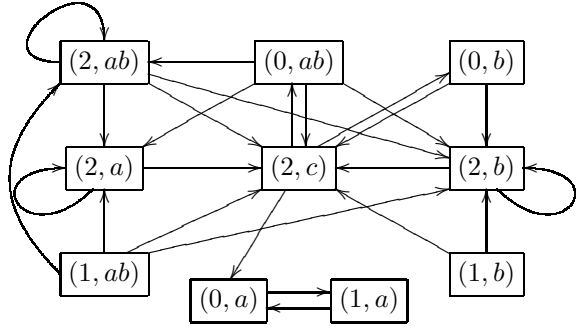
\begin{figure}
  \newsavebox{\tmpm}\savebox{\tmpm}{\xymatrix{*+[F]{(0,a)}\ar@<.7ex>[r]&*+[F]{(1,a)}\ar@<.7ex>[l]}}
  \begin{gather*}
    \xymatrix@C=4em{
      *+[F]{(2,ab)}\POS!L\ar@(l,u)[]!U\POS[]\ar[d]\POS!D!R\ar[dr]!U!L\POS[]\ar[drr]&
      *+[F]{(0,ab)}\ar[l]\POS!D!L\ar[dl]!U!R\POS[]\ar@<.6ex>[d]\ar@<-.6ex>@{<-}[d]\POS!D!R\ar[dr]!U!L&
      *+[F]{(0,b)}\ar[d]\POS!D!L\ar[dl]!U!R\POS[]\POS!L!D(.5)\ar@{<-}[dl]!U!R(.5)\\
      *+[F]{(2,a)}\ar[r]\POS!D!L(.3)\ar@(dl,l)[]!L&
      *+[F]{(2,c)}&
      *+[F]{(2,b)}\ar[l]\POS!D!R(.3)\ar@(dr,r)[]!R\\
      *+[F]{(1,ab)}\ar[u]\POS!U!R(.5)\ar[ur]!D!L\POS!U!R\ar[urr]!D!L\POS[]\POS!L\ar@(ul,dl)[uu]!D!L&
      \save[]+<0em,-1em>*{\usebox{\tmpm}}\POS!L(.75)\ar@{<-}[u]
      \restore&
      *+[F]{(1,b)}\ar[u]\ar[ul]!D!R
}
  \end{gather*}
  \caption{Directed graph of state-and-cliques (\DSC) for the previous example}
  \label{fig:qwdqqwfgkojngfffff}
\end{figure}

\paragraph{Combinatorics of concurrent systems.}
We extend the notion of valuation to concurrent systems as follows.

\begin{definition}
  \label{def:5}
A \emph{valuation} of a concurrent system~$(\M,X)$ is a collection~$\lambda=(\lambda_\alpha)_{\alpha\in X}$ of functions~$\lambda_\alpha:\M\to\RR_{\geqslant0}$ such that:
\begin{gather}
\label{eq:7}
\forall(\alpha,x)\in X\times \M\quad \lambda_\alpha(x)>0\iff \alpha\cdot x\neq\bot\\
\forall (\alpha,x,y)\in X\times \M\times\M\quad \lambda_\alpha(x y)=\lambda_\alpha(x)\lambda_{\alpha\cdot x}(y)
\end{gather}

The \emph{counting valuation} is defined by~$\lambda_\alpha(x)=\un{\alpha\cdot x\neq\bot}$.
\end{definition}

A valuation is entirely determined by the finite collection of values~$(\lambda_\alpha(a))_{(\alpha,a)\in X\times\Sigma}$.

To each valuation~$\lambda$ we attach a matrix of generating series~$G$, and a \emph{Möbius matrix}~$\Mob$, which is a matrix of polynomials;
both are square matrices of size~$X\times X$, defined by:
\begin{align*}
G_{\alpha,\beta}(z)&=\sum_{x\in\M_{\alpha,\beta}}\lambda_\alpha(x)z^{\len x}&
\Mob_{\alpha,\beta}(z)&=\sum_{c\in\C_{\alpha,\beta}}\lambda_\alpha(c)(-1)^{\len c}z^{\len c}.
\end{align*}

Then~$\Mob$ is the formal inverse of~$G$ (\cite[Th.~5.6]{abbes2019}), hence all the entries of~$G$ are rational series.

Let~$\rho_{\alpha,\beta}$ denote the radius of convergence of the~$(\alpha,\beta)$ entry of~$G$.
The \emph{characteristic root} of~$\SS$ (together with the valuation~$\lambda$) is defined as:
\begin{gather}
\label{eq:13}
\rho=\min_{(\alpha,\beta)\in X\times X}\rho_{\alpha,\beta}
\end{gather}
\begin{remark}
If the system is transitive, then~$\rho_{\alpha,\beta}$ is actually independent of~$(\alpha,\beta)$, hence~$\rho=\rho_{\alpha,\beta}$ for all pairs~$(\alpha,\beta)\in X\times X$.
If, furthermore, the system is non-trivial, then~$\rho<\infty$.
\end{remark}

\paragraph{Markov measures and probabilistic valuations.}
\label{sec:conc-syst-their}

In the following definition, we extend to concurrent systems the notions of memoryless measure and of probabilistic valuation, defined previously for trace monoids.

\begin{definition}
  \label{def:6}
A \emph{Markov measure} on a concurrent system~$(\M,X)$ is a collection~$m=(m_\alpha)_{\alpha\in X}$, where each~$m_\alpha$ is a probability measure on~$\BM$, and such that:
\begin{gather}
\label{eq:8}
\forall\alpha\in X\quad m_\alpha(\BM_\alpha)=1\\
\label{eq:50}
\forall(\alpha,x)\in X\times\M\quad x\in\M_\alpha\iff m_\alpha(\up x)>0\\
\label{eq:10}
\forall(\alpha,x,y)\in X\times\M\times\M\quad m_\alpha\bigl(\up (x y)\bigr)=m_\alpha(\up x) m_{\alpha\cdot x}(\up y).
\end{gather}

A valuation~$\lambda=(\lambda_\alpha)_{\alpha\in X}$ is a \emph{probabilistic valuation} if there exists a Markov measure~$m=(m_\alpha)_{\alpha\in X}$ such that~$\lambda_\alpha(x)=m_\alpha(\up x)$ for all~$(\alpha,x)\in X\times\M$.
\end{definition}

The conditions~\eqref{eq:50} and~\eqref{eq:10} are equivalent to saying that~$\lambda_\alpha(x)=m_\alpha(\up x)$ defines a valuation on~$\SS$.

The values~$m_\alpha(\up a)$, for~$(\alpha,a)$ ranging over~$ X\times \Sigma$, entirely determine the corresponding valuation.
These finitely many values are the natural probabilistic parameters of~$m$.
Their normalization condition is given in~\eqref{eq:11} below.

Let~$\lambda=(\lambda_\alpha)_{\alpha\in X}$ be a valuation and let~$h=(h_\alpha)_{\alpha\in X}$ be its Möbius transform;
namely, each~$h_\alpha$ is the Möbius transform (see~\S~\ref{sec:trace-monoids-1}) of the restriction~$\lambda_\alpha:\C\to\RR_{\geqslant0}$.
Then~$\lambda$ is a probabilistic valuation if and only if, for all~$\alpha\in X$~\cite[Th.~4.8]{abbes2019}:
\begin{align}
\label{eq:11}
h_\alpha(\ve)&=0,&\text{and\quad}\forall c\in\Cstar_\alpha\quad h_\alpha(c)&\geqslant0
\end{align}

\paragraph{Examples.}

Let~$\M$ act on a singleton set~$X=\{*\}$ by~$*\cdot x=*$ and~$\bot\cdot x=\bot$ for all~$x\in\M$.
Then the probabilistic valuations for the concurrent system~$(\M,X)$ correspond to the probabilistic valuations, in the sense of~\S~\ref{sec:trace-monoids-1}, of~$\M$.

Markov chains can be realized as concurrent systems with respect to a free monoid~$\M=\Sigma^*$.
Indeed, if~$E$ is the state space of a Markov chain, the corresponding concurrent systems is~$\SS=(\M,X)$ with~$X=E$,~$\M=E^*$ and~$\alpha\cdot (x_1\ldots x_n)=x_n$.
Markov measures on~$\SS$ correspond to the probability measures on the standard sample space associated with the Markov chain.

To continue the analysis of the toy example of concurrent system previously introduced and depicted on Figure~\ref{fig:qwqwdqaazaxcpow}, the following is checked to define a probabilistic valuation~$\lambda$; that is to say, to satisfy~\eqref{eq:11}:
\begin{gather}
\label{eq:36}\left\{ \begin{aligned}
\lambda_0(a)&=0.5
&\lambda_0(b)&=1
&\lambda_0(c)&=0\\
\lambda_1(a)&=0.5
&\lambda_1(b)&=1
&\lambda_1(c)&=0\\
\lambda_2(a)&=0.5
&\lambda_2(b)&=0.5
&\lambda_2(c)&=0.25
\end{aligned}\right.
\end{gather}
Indeed, if~$h$ is the Möbius transform of~$\lambda$, one has for instance:
\begin{gather*}
h_0(\ve)=1-\lambda_0(a)-\lambda_0(b)-\lambda_0(c)+\lambda_0(a)\lambda_1(b)=0.
\end{gather*}

\paragraph{The Markov chain of state-and-cliques.}

For trace monoids, we have seen that a memoryless measure gives rise to a Markov chain on cliques.

The analogous for a concurrent system equipped with a Markov measure~$\mu=(\mu_\alpha)_{\alpha\in X}$ is a Markov chain on the \DSC, relatively to every probability measure~$\mu_\alpha$.
Hence, if~$\alpha\in X$ is the initial state, the random sequence~$(\alpha_i,C_{i+1})_{i\geqslant0}$ of state-and-cliques corresponding to trajectories of~$\BM_\alpha$ is a Markov chain, characterized as follows~\cite[Th.~4.5]{abbes2019}.

Let~$f$ be the valuation associated to~$\mu$, let~$h=(h_\alpha)_{\alpha\in X}$ be the Möbius transform of~$f$, and for each~$\beta\in X$ let~$g_\beta:\C\to\RR_{\geqslant0}$ be defined by
\begin{gather}
  \label{eq:61}
g_\beta(c)=\sum_{d\in\C_{\beta}\tq c\to d}h_{\beta}(d)
\end{gather}
Then, relatively to~$\mu_\alpha$, the initial distribution of the Markov chain of state-and-cliques is~$\delta_{\{\alpha\}}\otimes h_\alpha$, and the transition matrix~$P$ is:
\begin{gather}
\label{eq:34}
P_{(\alpha,c),(\beta,d)}=\un{\beta=\alpha\cdot c}\un{c\to d}\frac{h_{\beta}(d)}{g_{\alpha\cdot c}(c)}
\qquad\text{if~$g_{\alpha\cdot c}(c)>0$},
\end{gather}
whereas~$P_{(\alpha,c),(\beta,d)}$ is undefined if~$g_{\alpha\cdot c}(c)=0$ (see below).

Furthermore, for any valuation~$f$ with Möbius transform~$h$, if one considers the function~$g$ defined by~\eqref{eq:61} and if~$h_\alpha(\ve)=0$ for all~$\alpha\in X$ (in particular, this holds if~$f$ is a probabilistic valuation), then the three functions~$f$,~$g$ and~$h$ are related by the following identity~\cite[Lemma~4.7]{abbes2019}:
\begin{gather}
\label{eq:27}
  \forall\alpha\in X\quad \forall c\in \C\quad h_\alpha(c)=f_\alpha(c)\,g_{\alpha\cdot c}(c)
\end{gather}

This has two consequences regarding the transition matrix~$P$.
 Firstly,~$g_{\alpha\cdot c}(c)=0\iff h_\alpha(c)=0$, and thus if the~$(\alpha,c)$ line of~$P$ is not defined, it corresponds to an~$(\alpha,c)$ column of~$P$ filled with~$0$s and to an initial state with~$0$ probability; hence,~$P$~is well defined by~\eqref{eq:34} as the transition matrix of a Markov chain.
Secondly, an alternative form of~$P$ is given by:
\begin{gather}
  \label{eq:62}
  P_{(\alpha,c),(\beta,d)}=\un{\beta=\alpha\cdot c}\un{c\to d}f_\alpha(c)\frac{h_\beta(d)}{h_\alpha(c)}
\end{gather}

\begin{definition}
  \label{def:7}
  Let~$(\M,X)$ be a concurrent system equipped with a Markov measure~$m$.
For each initial state~$\alpha\in X$, the associated \emph{Markov chain of state-and-cliques} is the probabilistic process~$(\alpha_i,C_{i+1})_{i\geqslant0}$ defined on the probability space~$(\BM_\alpha,m_\alpha)$.
It takes its values in the \DSC, its initial distribution is~$\delta_{\{\alpha\}}\otimes h_\alpha$, and its transition matrix is given by~\eqref{eq:34}, or equivalently by~\eqref{eq:62}.
\end{definition}

The next results in the theory of probabilistic valuations now rest upon a spectral property that we need to establish before further developments.

\section{The spectral property for irreducible concurrent systems}
\label{sec:spectr-prop-irred}

Let~$\M=\M(\Sigma,I)$ be a trace monoid.
For every letter~$a\in\Sigma$, let~$\Sigma^a=\Sigma\setminus\{a\}$ and~$I^a=I\cap(\Sigma^a\times\Sigma^a)$, and let~$\M^a$ be the submonoid of~$\M$ generated by~$\Sigma^a$, which is isomorphic to~$\M(\Sigma^a,I^a)$.

If~$\SS=(\M,X)$ is a concurrent system with~$\M=\M(\Sigma,I)$, equipped with a valuation~$\lambda$, and if~$a$ is a letter of~$\Sigma$, we define~$\SS^a=(\M^a,X)$ as the restriction of the monoid action to~$\M^a$.
It is equipped with the restriction of~$\lambda$ to~$\SS^a$.
Furthermore, let~$\rho^a$ be the characteristic root of~$\SS^a$, minimum radius of convergence of the rational series~$G^a_{\alpha,\beta}$.

From the inclusions~$\M_{\alpha,\beta}^a\subseteq \M_{\alpha,\beta}$ derives the termwise inequalities~$G^a_{\alpha,\beta}(z)\leqslant G_{\alpha,\beta}(z)$, which in turn induces the inequality of radii:
$\rho\leqslant\rho^a$.
The spectral property gives conditions for this inequality to be strict, which we shall see has many applications in the probabilistic theory of concurrent systems: see Theorem~\ref{thr:2},  Theorem~\ref{thr:3} and the results from Section~\ref{sec:consequences}.

\begin{theorem}
\label{thr:1}
Let~$\SS$ be an irreducible concurrent system.
Then~$\rho<\rho^a$ for every letter~$a$ of the base alphabet of the trace monoid.
\end{theorem}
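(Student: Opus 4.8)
The plan is to exploit the fact that the matrix of generating series $G$ is the formal inverse of the Möbius matrix $\Mob$, so that the characteristic root $\rho$ is the smallest modulus of a zero of $\det\Mob(z)$ (equivalently, the smallest $z>0$ at which $\Mob(z)$ fails to be invertible). Concretely, since the system is transitive, $\rho=\rho_{\alpha,\beta}$ for every pair, and all entries of $G$ are rational with poles among the zeros of $\det\Mob$; a Perron--Frobenius/positivity argument (using irreducibility and the liveness condition) shows that $\Mob(\rho)$ has a nontrivial kernel and that $z=\rho$ is where the spectral radius of the associated non-negative matrix first reaches $1$. I would set this up cleanly first: introduce, for $z\in(0,\rho^a]$, a non-negative matrix built from $\Mob(z)$ whose spectral radius equals $1$ exactly at $z=\rho$, and record that it is irreducible thanks to transitivity.

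Next I would bring in the sub-system $\SS^a$. The key algebraic identity is a ``one-step'' decomposition of traces according to the first occurrence of the letter $a$: every trace in $\M_{\alpha,\beta}$ either avoids $a$ entirely (contributing to $G^a_{\alpha,\beta}$) or factors uniquely as $w\,a\,x$ with $w\in\M^a$. This yields a matrix identity of the form $G(z) = G^a(z) + z\,G^a(z)\,A(z)\,G(z)$, where $A(z)$ encodes the single letter $a$ and its state-dependence, or equivalently, after rearranging, an expression for $\Mob(z)$ in terms of $\Mob^a(z)$ and the letter $a$. From such an identity one reads off that at $z=\rho^a$ the matrix $G^a(z)$ already has a pole (this is the definition of $\rho^a$), while $G(z)$ is finite on $(0,\rho)$; comparing, the relation forces $\rho<\rho^a$ **unless** the contribution of $a$ is, in the relevant sense, trivial — and here is where the liveness condition of Definition~\ref{def:2} enters to exclude that degenerate case. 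The liveness hypothesis guarantees that for each state $\alpha$ the letter $a$ genuinely contributes: there is $u$ with $\alpha\cdot(ua)\neq\bot$, so the coefficient matrix $A(\rho)$ cannot be annihilated against the relevant Perron eigenvector.

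The heart of the argument, and the step I expect to be the main obstacle, is turning this ``$a$ genuinely contributes'' intuition into a strict inequality of spectral radii. The clean way is: let $v>0$ be the Perron eigenvector of the irreducible non-negative matrix attached to $\Mob(\rho)$ (so the spectral radius is $1$ at $z=\rho$), and show that plugging $z=\rho$ into the analogous matrix for $\SS^a$ gives a matrix with spectral radius **strictly less than** $1$, because we have strictly removed mass (the $a$-transitions) in a way that, by irreducibility plus liveness, is not confined to a sub-block one could factor out. By monotonicity and continuity of the Perron root in $z$ for the $\SS^a$ matrix, its root reaches $1$ only at some $\rho^a>\rho$. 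Making the strict loss of spectral radius rigorous requires care: one must verify that the $\SS^a$ matrix is still irreducible (or handle its irreducible components), that the entrywise inequality is strict somewhere that matters, and that the Perron vector of the full system has full support — all of which rest on the three clauses of irreducibility, with clause~3 (liveness) doing the essential work to prevent $a$ from being ``invisible'' from some states. Once strictness of the spectral-radius drop is in hand, $\rho<\rho^a$ follows immediately from the characterization of $\rho^a$ as the value where the $\SS^a$ Perron root hits $1$.
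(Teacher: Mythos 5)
Your approach is genuinely different from the paper's, and it has a real gap that you yourself flag without resolving. The paper proves the theorem by a direct combinatorial argument: it introduces a \emph{link} $\ell$ (a trace using every letter, with no two consecutive commuting letters and $a_1=a_k$), a \emph{linking sequence} $\tau$ (a trace containing $\ell^2$ as a scattered subword), and shows (Lemma~\ref{lem:1}) that $(u,v)\in\M\times\M^a\mapsto u\tau v$ is injective. Combined with a reduction to cycle-traces of fixed length divisible by $n$ (Lemma~\ref{lem:2}), this yields the term-wise inequality $\mymu{\Lambda_{\beta_0}}{\beta_0}(t)\geqslant t^n\,\mymu{\Lambda_{\beta_0}}{\beta_0}(t)\,\mymu{Q^a_{\beta_0}}{\beta_0}(t)$, whence $\mymu{Q^a_{\beta_0}}{\beta_0}$ stays bounded on $(0,\rho)$; since it is a rational series with non-negative coefficients, its radius of convergence is a pole, so that radius exceeds $\rho$. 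No Perron--Frobenius machinery is invoked, and crucially no irreducibility of any encoding matrix is needed.

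The obstacle you identify at the end of your plan is exactly where the proposal breaks. The natural non-negative matrix to attach to this problem is the state-and-clique transfer matrix $F(z)$ (\eqref{eq:63} in the paper), and that matrix is \emph{not} irreducible in general: the \DSC\ contains unstable state-and-cliques, and even the stable part \DSCP\ can have several basic components (\S~\ref{sec:an-example-with} gives an explicit example). Consequently the Perron eigenvector of $F(\rho)$ does \emph{not} have full support --- the paper shows it vanishes on all non-stable state-and-cliques (Lemma~\ref{lem:22}) --- so the standard ``strict entrywise decrease of an irreducible non-negative matrix $\Rightarrow$ strict spectral-radius decrease'' argument does not apply. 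Worse, the structure you would need (one-dimensional kernel of $\Mob(\rho)$ generated by a positive vector, identification of which state-and-cliques carry mass) is established in the paper only in Lemmas~\ref{lem:11}--\ref{lem:22} and Theorem~\ref{thr:3}, all of which \emph{depend on} Theorem~\ref{thr:1}; using them here would be circular. Your ``first-occurrence-of-$a$'' factorization $x=w\,a\,y$ with $w\in\M^a$ maximal is sound as a trace-theoretic identity, and the resulting resolvent identity $G=(I-zG^aA)^{-1}G^a$ is correct, but extracting a strict inequality of radii from it still requires a positivity/support argument that, for reducible matrices, is precisely what is missing. The paper's injectivity lemma is the clean way around this reducibility; it substitutes a single one-to-one map for the delicate spectral bookkeeping.
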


The proof uses the custom notion of link and of linking sequence.
A \emph{link} is a trace~$a_1\ldots a_k$ such that:
\begin{compactenum}[1:]
\item every letter of~$\Sigma$ is a letter~$a_i$;
and
\item no two consecutive letters~$a_i$ and~$a_{i+1}$ satisfy~$(a_i,a_{i+1})\in I$;
and
\item~$a_1=a_k$.
\end{compactenum}
A \emph{linking sequence} is a trace~$\tau$ for which there exists a link~$\ell$ such that~$\ell^2$ is a (possibly scattered) subword of~$\tau$.
Note that, since no trace commutation relation swaps letters of~$\ell$, containing~$\ell$ or a power of~$\ell$ as a subword is indeed a trace property.

\begin{lemma}
\label{lem:1}
For every linking sequence~$\tau$ and every letter~$a\in\Sigma$, the function~$\varphi:(u,v)\in\M\times\M^a\mapsto u \tau v\in \M$ is one-to-one.
\end{lemma}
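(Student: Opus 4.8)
The plan is to exhibit an explicit inverse to $\varphi$, or at least to show injectivity by a combinatorial argument on representative words that exploits the rigidity of the link. First I would fix a link $\ell$ with $\ell^2$ a scattered subword of $\tau$, and write $\ell = a_1 \cdots a_k$ with $a_1 = a_k$ and no consecutive pair commuting. The key structural fact is that, because consecutive letters of $\ell$ never commute, the occurrences of $\ell^2$ inside any representative word of $u\tau v$ are "anchored": any trace-equivalent reshuffling must keep the letters of the two copies of $\ell$ in the same relative order, so in particular the first occurrence of the middle letter $a_k = a_1$ of $\ell^2$ (the junction between the two copies of $\ell$) is canonically located.

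The main idea is this: given $w = u\tau v$ with $u \in \M$, $v \in \M^a$, I want to recover $u$ and $v$. Consider the letter $a$. Since $v \in \M^a$, the letter $a$ does \emph{not} occur in $v$. On the other hand $a$ occurs in $\tau$ (indeed $a$ is a letter of $\ell$, hence of $\tau$, in fact it occurs at least twice in $\ell^2$). So in $w$, every occurrence of $a$ lies in the "$u\tau$" part. The cleanest route is to show that $\varphi$ is injective by a cancellation argument: suppose $u\tau v = u'\tau v'$ with $u, u' \in \M$ and $v, v' \in \M^a$. I would argue that the scattered copy of $\ell^2$ coming from $\tau$ must occupy "the same position" in both factorizations — more precisely, that there is a well-defined largest prefix of $w$ (in the left-divisibility order) that contains a scattered subword equal to $\ell\ell$ but becomes unable to do so after removing its last clique, and that this prefix has the form $u\tau_0$ where $\tau_0 \leqslant \tau$ is the minimal prefix of $\tau$ still containing $\ell^2$. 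This would pin down $u$ up to the ambiguity within $\tau$, and then right-cancellativity of $\M$ (recalled in Section~\ref{sec:trace-monoids-1}) recovers $v$.

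A more robust approach, which I would actually pursue, uses heaps/dependence graphs. Represent $w = u\tau v$ by its dependence graph (Viennot's heap of pieces): vertices are letter-occurrences, with an edge between two occurrences of non-commuting letters, oriented by position. The two copies of $\ell$ in $\ell^2 \subseteq \tau$ give a directed path $P$ of length $2k-1$ in this dependence graph whose consecutive vertices are joined by dependence edges (since consecutive letters of $\ell$ don't commute), and whose first, middle, and last vertices all carry the letter $a_1 = a_k$. Because $P$ is a genuine path in the dependence graph, it is an order-theoretic chain in the heap, and its vertices are therefore totally ordered in \emph{every} linearization of $w$. Now I claim: an occurrence of a letter lies in the $v$-part if and only if it lies strictly above the middle vertex $m$ of $P$ in the heap order, \emph{and} does not lie above any occurrence of $a$. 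The "only if" is clear since $v$ sits to the right of all of $\tau$; for the "if" direction I need that the $u$-part and the $\tau$-part both lie in the down-set of "$m$ together with the last $a$-occurrence of $\tau$", which is where the hypothesis that $\ell$ contains \emph{every} letter of $\Sigma$ (condition~1 in the definition of link) does the work — every letter-occurrence in $u\tau$ is below \emph{some} occurrence in $\ell^2$, hence below a vertex of $P$. Thus the partition of the dependence graph of $w$ into "$uv$-part" and "$v$-part" is reconstructible from $w$ alone, which gives $v$, and then right-cancellativity gives $u$.

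\textbf{Main obstacle.} The delicate point is the "if" direction above: proving that \emph{every} letter-occurrence of $u\tau$ lies weakly below some vertex of the path $P$ in the heap of $w = u\tau v$. For occurrences inside $\tau$ this should follow from condition~1 on links (every letter of $\Sigma$ appears in $\ell$) combined with the structure of $\ell^2$, but one must be careful that an occurrence in $u$ of a letter $b$ might fail to be below the \emph{specific} $b$-occurrence in $\ell^2$ if there are intervening non-commuting letters — here one uses that one can always route down from that occurrence of $b$ in $\ell^2$ through $\tau$ and then into $u$, because $b$ depends on something that ultimately forces it below. Getting this routing argument exactly right, and handling the boundary cases where $u$ or $v$ is empty, is where the real work lies; everything else is bookkeeping with dependence graphs and the cancellativity properties already recalled.
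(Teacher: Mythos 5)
Your heap-of-pieces reconstruction has a genuine gap, and it is not the kind that can be repaired by "getting the routing argument exactly right": the criterion you propose for identifying the $v$-part of $w=u\tau v$ is simply not correct, in either direction. Take $\M=\{a,b\}^*$ free, $\ell=bab$ (a valid link), $\tau=\ell^2=babbab$, $u=\varepsilon$, $v=b$. Write $w=b_1a_1b_2b_3a_2b_4b_5$ with $b_5$ the lone occurrence coming from $v$. Then $b_5$ does lie above $a_1$ and $a_2$ in the heap order (the monoid is free, everything is comparable), so your stated condition "does not lie above any occurrence of $a$" excludes $b_5$ from the $v$-part even though $b_5$ is all of $v$; the "only if" direction fails, and the justification you offer for it ("$v$ sits to the right of all of $\tau$") conflates concatenation order with the heap order, which are very different. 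If instead you meant "does not lie \emph{below} any occurrence of $a$," the "if" direction fails on the same example: $b_4$ lies above the middle vertex $m$ and below no occurrence of $a$, yet $b_4$ belongs to $\tau$, not $v$. The underlying obstruction is structural: the frontier between the $\tau$-occurrences and the $v$-occurrences inside the heap of $w$ is \emph{not} intrinsically recoverable, so any criterion that tries to pick out $v$ directly from the geometry of $w$ is doomed.

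What the paper does instead — and what your plan misses — is to recover not $v$ but the quantity $(u\tau v)_a$, the largest right-divisor of $u\tau v$ lying in $\M^a$, and to prove that $(u\tau)_a$ is a \emph{constant} $z=\tau_a$ independent of $u$. The engine is the claim (\dag) that $u_a$ is the unique right-divisor $x\in\M^a$ with $\gamma(ux^{-1})\in\{\varepsilon,a\}$, together with the observation that the least left-divisor $x$ of $\tau$ containing $\ell$ as a subword absorbs everything: $\gamma(ux)=\gamma(x)=a_1$ for all $u$, because $x$ contains every letter of $\Sigma$. One then gets $(u\tau v)_a=\tau_a v$, hence $v=\tau_a^{-1}(u\tau v)_a$ and $u=(u\tau v)(\tau v)^{-1}$ by right-cancellativity. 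So the injectivity is proved by reconstructing the pair $(\tau_a v,\,u)$, not the pair $(v,u)$. Your first, discarded approach ("smallest prefix of $w$ containing $\ell^2$") is closer in flavor to this, but as stated it too breaks down when $u$ itself contains a scattered copy of $\ell$, or when a new $\ell$ forms across the $u$–$\tau$ seam; the paper works with the least left-divisor of $\tau$ (not of $w$), precisely to sidestep that issue.
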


\begin{proof}
For each~$u\in\M$, let~$\gamma(u)$ denote the largest clique of~$\M$ that right-divides~$u$ and let
$u_a$ denote the largest right-divisor of~$u$ that lies in~$\M^a$, which are both well-defined thanks to Property~(P\ref{item:2}) of~\S~\ref{sec:trace-monoids-1}.

We claim that:
\begin{compactenum}
\item[(\dag)]\label{item:3} 
$u_a$ is the only trace~$x\in\M^a$ that right-divides~$u$ and such that~$\gamma(ux^{-1})\in\{\ve,a\}$
\end{compactenum}
Indeed, let~$x=u_a$;
if~$\gamma(ux^{-1})$ were divisible by a letter~$b\neq a$, the trace~$b x$ would also be a right divisor of~$u$ in~$\M^a$:
a contradiction.
Conversely, let~$x\in\M^a$ be a right divisor of~$u$ such that~$\gamma(ux^{-1})\in\{\ve,a\}$.
Then~$x$ right-divides~$u_a$, and if~$x$ were a strict right divisor of~$u_a$, there would exist a letter~$b\in\Sigma^a$ such that~$b x$ right-divides~$u_a$;
hence,~$b x$ would also right-divide~$u$, and~$b$ would divide~$\gamma(ux^{-1})$:
a contradiction.
The claim~(\dag) is proved.

Now, let~$x$ be the least left divisor of~$\tau$ that contains~$\ell$ as a subword, which exists according to Property~(P\ref{item:1}) of~\S~\ref{sec:trace-monoids-1}, and let~$z=(x^{-1}\tau)_a$.
We claim that, for every trace~$u\in\M$:
\begin{gather}
\label{eq:14}
z=(u\tau)_a
\end{gather}

Let~$y=x^{-1}\tau z^{-1}$.
By construction, and since~$\tau$ contains~$\ell^2$ as a subword,~$y z$~also contains~$\ell$ as a subword, and~$y$ contains an occurrence of all letters of~$\ell$, including~$a_1$, until the rightmost occurrence of~$a$ in~$\ell$.
Furthermore, the claim~(\dag) applied to~$x^{-1}\tau$ yields~$\gamma(y)=a$.

Let~$u\in \M$.
Since~$x$ contains an occurrence of every letter of~$\Sigma$, one has~$\gamma(ux)=\gamma(x)$.
In addition, if~$\gamma(x)$ were divisible by a letter~$b\neq a_k$, the trace~$xb^{-1}$ would be a shorter left-divisor of~$\tau$ containing~$\ell$ as a subword, contradicting the definition of~$x$.
It follows that~$\gamma(x)=a_k=a_1$.

Hence, the well known property~$\gamma(w w')=\gamma(\gamma(w) w')$ for all traces~$w,w'\in\M$, yields:
\begin{gather}
\label{eq:15}
\gamma(u x y)=\gamma(\gamma(u x) y)=\gamma(\gamma(x) y)=\gamma(a_1y)
\end{gather}

But~$y$ contains an occurrence of~$a_1$, hence~\eqref{eq:15} yields:
$\gamma(u x y)=\gamma(y)=a$.
Hence,~$\gamma(u \tau z^{-1})=\gamma(u x y)=a$, and then~(\dag)~yields~\eqref{eq:14}.

Since~\eqref{eq:14} is true for every trace~$u$, it follows that~$z=\tau_a$ and therefore that~$(u \tau v)_a=\tau_a v$ for every~$v\in\M^a$.
Consequently, given a trace~$\varphi(u,v)=u \tau v$, we have~$v=\tau_a^{-1} (u \tau v)_a$ and then~$u=(u \tau v)(\tau v)^{-1}$, which implies that~$\varphi$ is one-to-one.
\end{proof}

Let~$\myrho{N}{\alpha}$ denote, for every subset~$N\subseteq\M$ and every~$\alpha\in X$, the radius of convergence of the formal sum in~$t$:
\begin{gather*}
\mymu{N}{\alpha}=\sum_{x\in N}\lambda_\alpha(x)t^{\len x}
\end{gather*}
We shall prove:
\begin{gather}
\label{eq:17}
\forall a\in\Sigma\quad \forall\alpha\in X\quad\myrho{\M}{\alpha}<\myrho{\M^a}{\alpha}
\end{gather}

\begin{lemma}
\label{lem:2}
Let~$n$ be a positive integer.
For every state~$\alpha\in X$, denoting by~$Q_\alpha$ the set of traces of length divisible by~$n$ and that maps~$\alpha$ to itself, we have~$\myrho{\M}{\alpha}=\min\{\myrho{Q_\beta}{\beta}\tq\beta\in \alpha\cdot\M_\alpha\}$.
\end{lemma}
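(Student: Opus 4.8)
The plan is to prove the two inequalities $\myrho{\M}{\alpha}\leqslant\min_\beta\myrho{Q_\beta}{\beta}$ and $\myrho{\M}{\alpha}\geqslant\min_\beta\myrho{Q_\beta}{\beta}$ separately, where $\beta$ ranges over the reachable states $\alpha\cdot\M_\alpha$. The first inequality is immediate: each $Q_\beta$ is a subset of $\M_\beta$, so $\mymu{Q_\beta}{\beta}(t)\leqslant G_{\beta,\beta}(z=t)$ termwise, giving $\myrho{Q_\beta}{\beta}\geqslant\rho_{\beta,\beta}=\myrho{\M}{\beta}$; and by transitivity (which holds since $\SS$ is irreducible) $\myrho{\M}{\beta}=\myrho{\M}{\alpha}$ for all $\beta$. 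So the real content is the reverse inequality.

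For the reverse inequality, the idea is to decompose an arbitrary trace $x\in\M_\alpha$ according to the sequence of states it visits. Set $\beta=\alpha\cdot x$. Choose fixed ``return loops'': for each reachable state $\beta$ pick a trace $w_\beta\in\M_{\beta,\alpha}$ and a trace $v_\beta\in\M_{\alpha,\beta}$ (possible by transitivity), and let $L=\max_\beta(\len{w_\beta}+\len{v_\beta})$. Given $x\in\M_{\alpha,\beta}$ with $\len x=m$, the trace $x\,w_\beta\,v_\beta$ lies in $\M_{\alpha,\beta}$ again and has length $m+\len{w_\beta}+\len{v_\beta}$; iterating, one reaches a trace in $Q_\beta\cap\M_{\alpha,\beta}$ (i.e.\ of length divisible by $n$ and fixing $\alpha$... — here one must be a little careful: $Q_\beta$ fixes $\beta$, not $\alpha$, so instead I would first move from $\alpha$ to $\beta$ by $v_\beta$, and realize $x$ as living inside a loop at $\beta$). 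Concretely, I would map $x\in\M_{\alpha,\beta}$ to $w_\beta\,x\in\M_{\beta,\beta}$, then right-multiply by a correcting trace of bounded length to make the total length divisible by $n$, landing in $Q_\beta$. This gives, for each reachable $\beta$, an injection (or at least a boundedly-to-one map) from $\M_{\alpha,\beta}$ into $\bigcup_{j=0}^{?}$ (shifts of) $Q_\beta$, increasing lengths by at most a constant $K$ and multiplying weights by a factor bounded below by a positive constant $\kappa$ (the minimum of $\lambda$ over the finitely many correcting traces). Summing over $\beta$: $\mymu{\M}{\alpha}(t)=\sum_\beta \sum_{x\in\M_{\alpha,\beta}}\lambda_\alpha(x)t^{\len x}\leqslant \sum_\beta \kappa^{-1} t^{-K}\,\mymu{Q_\beta}{\beta}(t)$ for $t$ in the common radius, whence $\myrho{\M}{\alpha}\geqslant\min_\beta\myrho{Q_\beta}{\beta}$.

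The main obstacle I anticipate is establishing the near-injectivity of the decomposition map while controlling length and weight uniformly. Since trace monoids are left- and right-cancellative, fixing the correcting traces makes the map injective outright, so this should be manageable; the care needed is purely bookkeeping — tracking that the offsets in length lie in a finite set and that the multiplicative weight distortion is bounded away from $0$ and $\infty$ on that finite set. A secondary subtlety is that $Q_\beta$ imposes ``length divisible by $n$'', so the image of $\M_{\alpha,\beta}$ is not all of $Q_\beta$ but a finite union of ``length-shifted'' copies; since a finite union of formal sums with the same radius of convergence again has that radius, this does not affect the conclusion. Finally, I would note that transitivity and non-triviality are exactly the hypotheses guaranteeing that $\myrho{\M}{\beta}$ is finite and independent of $\beta$, which is what lets the finitely many terms indexed by $\beta\in\alpha\cdot\M_\alpha$ be compared cleanly; this is why the statement restricts to reachable states.
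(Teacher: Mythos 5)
Your approach to the hard inequality is genuinely different from the paper's.  The paper identifies each $u\in\M_\alpha$ with its lexicographically minimal word, chops the induced path on $X$ into length-$n$ blocks, and then passes to a \emph{minimal-length} refinement $p_1\cdots p_\ell$ in which each $p_i$ ($i<\ell$) is either a length-$n$ block or a cycle of length divisible by $n$.  Minimality forces the starting states of the $p_i$'s to be pairwise distinct (up to the (cycle, successor) exception), so $u$ decomposes into at most $|X|$ non-cycle blocks of total length $\leqslant n|X|$ together with cycles from $Q_\beta$'s at pairwise distinct $\beta$, and the bound $\mymu\M\alpha(t)\leqslant\sum_{k=0}^{n|X|}(\Lambda t)^k\prod_\beta\mymu{Q_\beta}\beta(t)$ falls out directly.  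This produces cycles of length divisible by $n$ \emph{by construction}, with no need to repair lengths afterwards.  You instead pad each $x\in\M_{\alpha,\beta}$ by prepending $w_\beta$ and appending a ``correcting'' $\beta$-cycle to land in $Q_\beta$; this gives a per-$\beta$ comparison $\myrho{\M_{\alpha,\beta}}{\alpha}\geqslant\myrho{Q_\beta}\beta$ and then a minimum over $\beta$, which is a cleaner statement but puts all the weight on the padding step.

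That padding step is where there is a real gap, not just bookkeeping.  You need, for each $x\in\M_{\alpha,\beta}$, a $\beta$-cycle $z$ of \emph{bounded} length with $\len{w_\beta x}+\len z\equiv 0\pmod n$.  The lengths of $\beta$-cycles form a numerical submonoid $N\subseteq\NN$ with some gcd $d$; since $w_\beta x$ is itself a $\beta$-cycle, $\len{w_\beta x}\equiv 0\pmod d$, and one must hit the residue $-\len{w_\beta x}\bmod n$ with an element of $N$.  This works because $N$ eventually contains every multiple of $d$, so the residues of $N$ modulo $n$ are exactly the multiples of $\gcd(d,n)$ — but that is a genuine (Frobenius/numerical-semigroup) argument, and without it the claim can fail: the natural candidate $z=(w_\beta v_\beta)^j$ only reaches residues in the cyclic subgroup generated by $\len{w_\beta v_\beta}\bmod n$, which in general is a strict subset of what you need.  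You should supply this argument (or reorganize along the paper's lines) before the plan is complete.  A secondary, minor point: your easy direction invokes transitivity to get $\myrho\M\beta=\myrho\M\alpha$; the paper's argument via the inclusion $uQ_{\alpha\cdot u}\subseteq\M_\alpha$ avoids that hypothesis, and your harder direction also relies on the existence of $w_\beta\in\M_{\beta,\alpha}$, i.e.\ on transitivity again — harmless in context, since the lemma is only applied to irreducible systems, but worth noting that the paper's proof does not need it.
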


\begin{proof}
First, for each trace~$u\in\M_\alpha$, the inclusion~$uQ_{\alpha\cdot u}\subseteq\M_\alpha$ proves that~$\myrho{\M}{\alpha}\leqslant\myrho{Q_{\alpha\cdot u}}{\alpha\cdot u}$, and therefore that~$\myrho{\M}{\alpha}\leqslant\min\{\myrho{Q_\beta}{\beta}\tq\beta\in \alpha\cdot\M_\alpha\}$.

Then, we identify every trace~$u\in\M_\alpha$ with its lexicographically minimal representation as a word~$w$ in~$\Sigma^*$, and with the path~$p$ induced by the action of~$w$ on~$\alpha$.
That path admits a factorization, of minimal length, as a product
\begin{gather*}
p=p_1\cdot\ldots\cdot p_\ell
\end{gather*}
where~$|p_\ell|\leqslant n$ and each path~$p_i$, when~$i<\ell$, is either of length~$n$ or is a cycle of length divisible by~$n$.
Since~$\ell$ is minimal, any two paths~$p_i$ and~$p_j$ such that~$i<j$ start from distinct states, except if~$j=i+1$ and~$p_i$ is a cycle.
It follows that~$\ell\leqslant 2n$ and that~$p$ is a concatenation of up to~$|X|$ paths of length at most~$n$ and~$|X|$ cycles belonging to a set~$Q_\beta$, for pairwise distinct states~$\beta$.

Setting~$\Lambda=\max\{\lambda_\beta(a)\tq(\beta,a)\in X\times\Sigma\}$, we conclude that
\begin{gather*}
\mymu\M\alpha(t)\leqslant\sum_{k=0}^{n|X|}(\Lambda t)^k\prod_{\beta\in\alpha\cdot\M_\alpha}\mymu{Q_\beta}\beta(t)
\end{gather*}
and therefore that~$\myrho\M\alpha\geqslant\min\{\myrho{Q_\beta}\beta\tq\beta\in\alpha\cdot\M_\alpha\}$.
\end{proof}

\begin{proof}[Proof of Theorem~\ref{thr:1}.] We aim at proving~\eqref{eq:17}, which implies the result of the theorem.

Thanks to the irreducibility hypothesis on~$\SS$, we consider a link~$\ell=a_1\ldots a_k$ and, for each state~$\alpha\in X$, a linking sequence~$u_\alpha$.
We consider then a trace~$v_\alpha$ such that~$(\alpha\cdot u_\alpha)\cdot v_\alpha=\alpha$.
Then, let~$n$ be the least common multiple of all lengths~$\len{u_\alpha v_\alpha}$;
the trace~$\tau_\alpha=(u_\alpha v_\alpha)^{n/\len{u_\alpha v_\alpha}}$ is a linking sequence of length~$n$ that maps~$\alpha$ to itself.

Like in the previous proof, we identify every trace~$u\in\M_\alpha$ with its lexicographically minimal representation as a word~$w\in\Sigma^*$, and with the path~$p$ induced by the action of~$w$ on~$\alpha$

On the one hand, let~$\Lambda_\alpha$ be the set of traces that map~$\alpha$ to itself and that can be factored as a product of traces of length~$n$ such that the rightmost factor using the letter~$a$, if any, is a trace~$\tau_\beta$ for some state~$\beta\in X$.
On the other hand, let~$Q_\alpha^a$ be the set of all traces~$u\in\M^a$ of length divisible by~$n$ that map~$\alpha$ to itself, and let~$\beta_0\in \alpha\cdot\M_\alpha$ be a state such that the radius~$\myrho{Q_{\beta_0}^a}{\beta_0}$ is minimal.

According to Lemma~\ref{lem:1}, the map~$(u,v)\in\Lambda_{\beta_0}\times Q_{\beta_0}^a\mapsto u\tau_{\beta_0}v\in\Lambda_{\beta_0}$ is one-to-one, and thus:
\begin{gather*}
\mymu{\Lambda_{\beta_0}}{\beta_0}(t)\geqslant t^n \mymu{\Lambda_{\beta_0}}{\beta_0}(t)\times
\mymu{Q_{\beta_0}^a}{\beta_0}(t)
\end{gather*}
term-wise.
It follows that~$0\leqslant \mymu{Q_{\beta_0}^a}{\beta_0}(t)\leqslant1/t^n$ on the interval~$(0,\rho)$ where~$\rho=\myrho{\Lambda_{\beta_0}}{\beta_0}$.

Moreover,~$\M^a$ admits rational normal forms and~$X$ is finite, so that~$\mymu{Q_{\beta_0}^a}{\beta_0}$ is a rational series with non-negative-coefficients.
In particular, its radius of convergence is a pole;
and since~$\mymu{Q_{\beta_0}^a}{\beta_0}$ is bounded on~$(0,\rho)$, it follows that~$\myrho{Q_{\beta_0}^a}{\beta_0}>\rho$.

But~$\rho\geqslant\myrho\M\alpha$ and~$\myrho{Q_{\beta_0}^a}{\beta_0}=\myrho{\M^a}\alpha$ according to Lemma~\ref{lem:2}.
Hence,~$\myrho{\M^a}\alpha>\myrho\M\alpha$, which was to be proved.
\end{proof}

\section{Probabilistic valuations}
\label{sec:constr-prob-valu}

From now on, we consider a concurrent system~$\SS=(\M,X)$ equipped with a valuation~$f=(f_\alpha)_{\alpha\in X}$, and we assume that~$\SS$ is transitive and non-trivial.

\subsection{Weak convergence of Boltzmann-like distributions}
\label{sec:weak-conv-boltzm}

\begin{definition}[cocycle]
\label{def:1}
Let~$X$ be a set.
A \emph{cocycle} on~$X$ is a function~$\Delta:X\times X\to\RR_{\geqslant0}$ such that~$\Delta(\alpha,\gamma)=\Delta(\alpha,\beta)\Delta(\beta,\gamma)$ for all~$\alpha,\beta,\gamma\in X$.
If~$\Delta$ takes only positive values, we say that~$\Delta$ is a \emph{positive cocycle}.
\end{definition}

For every positive real~$r$ and for every positive cocycle~$\Delta$ on~$X$, the following defines another valuation~$\ftilde$ on~$\SS$:
\begin{gather}
\label{eq:18}
\forall(\alpha,x)\in X\times\M\quad \ftilde_\alpha(x)=r^{\len x}\Delta(\alpha,\alpha\cdot x)f_\alpha(x)
\end{gather}

We are looking for conditions on~$r$ and~$\Delta$ for the new valuation~$\ftilde$ to be probabilistic.
This will prove the existence of probabilistic valuations, and thus of Markov measures.

\medskip
For each~$\alpha\in X$, let~$G_\alpha(s)$ be the generating function
$G_\alpha(s)=\sum_{x\in\M_\alpha}f_\alpha(x)s^{\len x}$, of radius of convergence~$\rho$;
and for~$s\in[0,\rho)$ let~$\nu_{\alpha,s}$ be the Boltzmann-like probability distribution on~$\M_\alpha$ defined by
\begin{gather}
\label{eq:23}
\nu_{\alpha,s}=\frac1{G_\alpha(s)}\sum_{x\in\M_\alpha}f(x)s^{\len x}\delta_{\{x\}}
\end{gather}

\begin{lemma}
\label{lem:3}
For each~$\alpha\in X$, and as~${s\longrightarrow\rho^-}$, the following weak convergence holds:
\begin{gather*}
\nu_{\alpha,s}\xrightarrow{\ \textsf{w}\ }\mu_\alpha
\end{gather*}
where~$\mu_\alpha$ is the probability measure on~$\partial\M_\alpha$ entirely characterized by~:
\begin{gather}
\label{eq:19}
\forall x\in\M_\alpha\quad\mu_\alpha(\up x)=f_\alpha(x)\rho^{\len x}\Delta(\alpha,\alpha\cdot x)
\end{gather}
and~$\Delta:X\times X\to\RR_{>0}$ is the positive cocycle given by
\begin{gather*}
\Delta(\alpha,\beta)=\lim_{s\to\rho^-}\frac{G_\beta(s)}{G_\alpha(s)}
\end{gather*}
\end{lemma}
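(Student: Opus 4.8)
The plan is to establish the weak convergence by showing that the "mass functions" $\nu_{\alpha,s}(\up x)$ converge, as $s \to \rho^-$, to the right-hand side of~\eqref{eq:19}, and then invoke a standard $\pi$-system argument to upgrade this to weak convergence of measures on the compact space $\Mbar$ (equivalently, on the closed subspace $\partial\M_\alpha$). The visual cylinders $\up x$ together with $\emptyset$ form a $\pi$-system generating $\F$, and $\Mbar$ is compact metric, so pointwise convergence of $\nu_{\alpha,s}(\up x)$ to a premeasure that extends to a genuine probability measure $\mu_\alpha$ suffices; the limit measure is then automatically unique by the $\pi$-system uniqueness theorem. One must first check that for $x\in\M_\alpha$,
\begin{gather*}
\nu_{\alpha,s}(\up x)=\frac1{G_\alpha(s)}\sum_{y\in\M_\alpha\tq x\leqslant y}f_\alpha(y)s^{\len y}.
\end{gather*}

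The core computation is a factorization of this last sum. Writing $y=xz$ with $z\in\M_{\alpha\cdot x}$ (this is a bijection between $\{y\in\M_\alpha : x\leqslant y\}$ and $\M_{\alpha\cdot x}$, using left-cancellativity), and using the multiplicativity $f_\alpha(xz)=f_\alpha(x)f_{\alpha\cdot x}(z)$ and $\len{xz}=\len x+\len z$, one gets
\begin{gather*}
\nu_{\alpha,s}(\up x)=f_\alpha(x)s^{\len x}\,\frac{G_{\alpha\cdot x}(s)}{G_\alpha(s)}.
\end{gather*}
Thus the whole matter reduces to showing that $G_\beta(s)/G_\alpha(s)$ has a finite, positive limit as $s\to\rho^-$, for every pair $(\alpha,\beta)$, and that the resulting function $\Delta(\alpha,\beta)$ is a positive cocycle. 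The cocycle identity $\Delta(\alpha,\gamma)=\Delta(\alpha,\beta)\Delta(\beta,\gamma)$ is then immediate from $\frac{G_\gamma}{G_\alpha}=\frac{G_\beta}{G_\alpha}\cdot\frac{G_\gamma}{G_\beta}$, provided all three limits exist and are finite and nonzero.

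The main obstacle is therefore the existence, finiteness, and positivity of $\lim_{s\to\rho^-} G_\beta(s)/G_\alpha(s)$. This is where I expect the transitivity hypothesis and the earlier structural results to be used. Since $\SS$ is transitive, $G$ is (the inverse of the Möbius matrix $\Mob$ and) an irreducible-type matrix of rational series all having the same radius of convergence $\rho$ (by the Remark following~\eqref{eq:13}); pick traces $u\in\M_{\alpha,\beta}$ and $w\in\M_{\beta,\alpha}$. Then $uM_{\beta,\cdot}\subseteq M_{\alpha,\cdot}$ and $wM_{\alpha,\cdot}\subseteq M_{\beta,\cdot}$ give, via the factorization above,
\begin{gather*}
f_\alpha(u)s^{\len u}\,G_\beta(s)\leqslant G_\alpha(s),\qquad f_\beta(w)s^{\len w}\,G_\alpha(s)\leqslant G_\beta(s),
\end{gather*}
so that the ratio $G_\beta(s)/G_\alpha(s)$ stays bounded away from $0$ and $\infty$ on a left-neighbourhood of $\rho$. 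For the existence of the limit one argues that $G_\alpha(s)\to\infty$ as $s\to\rho^-$ (because $\rho$ is a pole of the rational series $G_{\alpha,\beta}$, all entries being non-negative with a genuine singularity at $\rho$ by Pringsheim), and then analyses the behaviour near the dominant singularity: writing $G_{\alpha,\beta}(s)=N_{\alpha,\beta}(s)/\det\Mob(s)$ with a common denominator, the ratio $G_\beta(s)/G_\alpha(s)=\bigl(\sum_\delta N_{\beta,\delta}(s)\bigr)/\bigl(\sum_\delta N_{\alpha,\delta}(s)\bigr)$ is a ratio of polynomials whose limit at $s=\rho$ exists as soon as the numerator polynomial $\sum_\delta N_{\alpha,\delta}$ does not vanish at $\rho$ — which is guaranteed by $G_\alpha(s)\to+\infty$ only if $\det\Mob(\rho)=0$ and the numerator does not. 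The clean way to package this is a Perron–Frobenius argument on the non-negative matrix $G(s)$ (or on a suitable primitive power obtained from transitivity): the normalized vectors $\bigl(G_\alpha(s)\bigr)_{\alpha}/\sum_\alpha G_\alpha(s)$ converge to the Perron eigenvector as $s\to\rho^-$, which yields simultaneously the existence, positivity and finiteness of all the ratios $\Delta(\alpha,\beta)$. Once $\Delta$ is in hand, formula~\eqref{eq:19} follows by passing to the limit in the displayed expression for $\nu_{\alpha,s}(\up x)$, and the fact that $\mu_\alpha$ is supported on $\partial\M_\alpha$ comes from condition~\eqref{eq:50}-type reasoning: $\mu_\alpha(\up x)>0$ exactly when $x\in\M_\alpha$, and a Borel–Cantelli / tightness argument shows no mass escapes to finite traces, the limit sitting on the boundary.
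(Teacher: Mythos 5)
Your overall plan matches the paper's: compute $\nu_{\alpha,s}(\up x)$ via the factorization $y=xz$, reduce to showing $G_\beta(s)/G_\alpha(s)$ has a finite positive limit at $\rho^-$, observe the cocycle identity, and then upgrade to weak convergence on the compact space $\Mbar$ using the $\pi$-system $\{\Up x\}$ and the Portemanteau criterion (the cylinders being continuity sets of empty topological boundary). The correct transitivity-based two-sided bound on $G_\beta/G_\alpha$ is also present.

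Two places where your argument strays from the paper and becomes either roundabout or incomplete. First, for the existence of $\lim_{s\to\rho^-} G_\beta(s)/G_\alpha(s)$, the paper's observation is short and decisive: $H_{\alpha,\beta}(s)=G_\beta(s)/G_\alpha(s)$ is a rational function of one real variable, bounded on $[\rho/2,\rho)$ by transitivity, so it automatically has a limit at $\rho^-$; and $H_{\beta,\alpha}=1/H_{\alpha,\beta}$ is also bounded, which forces the limit to be positive. Your common-denominator analysis eventually says the same thing (a ratio of polynomials always has a limit in $[0,+\infty]$, and boundedness kills the infinite case), but the intermediate claim that $G_\alpha(s)\to+\infty$ ``only if $\det\Mob(\rho)=0$ and the numerator does not'' vanish is not needed and is actually dubious as stated — both could vanish with the denominator vanishing to higher order. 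The Perron–Frobenius alternative is hand-waved: $G(s)$ is a family of matrices indexed by $s$, not one matrix, and you never specify which fixed non-negative matrix the Perron eigenvector argument applies to, nor why the primitive power is available. The paper avoids all of this.

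Second, and more of a genuine gap: the fact that the weak limit $\mu_\alpha$ gives no mass to $\M$ (so lives on $\partial\M$) is not a ``Borel–Cantelli / tightness'' fact. The paper's argument is: each singleton $\{x\}$, $x\in\M$, has empty topological boundary in $\Mbar$, so by Portemanteau $\mu_\alpha(\{x\})=\lim_{s\to\rho^-} f_\alpha(x)s^{\len x}/G_\alpha(s)=0$, and this limit vanishes precisely because $G_\alpha(s)\to\infty$, which is where \emph{non-triviality} of $\SS$ enters. Countability of $\M$ then gives $\mu_\alpha(\M)=0$. Your sketch never uses non-triviality, never treats the singletons, and ``\eqref{eq:50}-type reasoning'' concerns positivity of cylinder masses, not the absence of atoms on $\M$. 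Similarly, the concentration on $\BM_\alpha$ (rather than $\BM$) is obtained in the paper by passing to the limit in $\nu_{\alpha,s}(\Mbar_\alpha)=1$ using that $\Mbar_\alpha$ is closed; you should make that step explicit rather than folding it into the vague Borel–Cantelli remark.
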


\begin{proof}
For each pair~$(\alpha,\beta)\in X\times X$ and for each~$s\in[0,\rho)$, define~$H_{\alpha,\beta}(s)$ by:
\begin{gather*}
H_{\alpha,\beta}(s)=\frac{G_\beta(s)}{G_\alpha(s)}
\end{gather*}
which is well defined since~$G_\alpha(s)>0$ on~$[0,\rho)$.
Let~$x\in\M_\alpha$ be such that~$\beta=\alpha\cdot x$.
Then, since~$f$ is a valuation and is non-negative:
\begin{gather}
\label{eq:16}
G_\alpha(s)\geqslant f_\alpha(x)s^{\len x}G_\beta(s)
\end{gather}
Since~$f_\alpha(x)>0$, it entails that~$H_{\alpha,\beta}$ is bounded on~$[\rho/2,\rho)$.
Since~$H_{\alpha,\beta}(s)$ is a rational function, it has thus a limit~$\Delta(\alpha,\beta)\in[0,+\infty)$ as~$s\to\rho^-$.
But~$H_{\beta,\alpha}(s)=\bigl(H_{\alpha,\beta}(s)\bigr)^{-1}$ for~$s\in[0,\rho)$ and also has a limit~$\Delta(\beta,\alpha)\in[0,+\infty)$.
This entails that~$\Delta(\alpha,\beta)>0$.
So far we have proved that the cocycle~$\Delta$ is well defined and positive.

Since~$\M$ identifies with a subset of~$\Mbar$, we see the probability measures~$\nu_{\alpha,s}$ as defined on the compact space~$\Mbar$ rather than on~$\M$.
From the definition~\eqref{eq:23}, we derive:
\begin{gather}
\label{eq:20}
\forall (\alpha,x)\in X\times \M\quad \nu_{\alpha,s}(\Up x)=s^{\len x}f_\alpha(x)H_{\alpha,\alpha\cdot x}(s)
\end{gather}
and therefore:
\begin{gather}
\label{eq:21}
\forall (\alpha,x)\in X\times \M\quad \lim_{s\to\rho^-}\nu_{\alpha,s}(\Up x)=f_\alpha(x)\rho^{\len x}\Delta(\alpha,\alpha\cdot x)
\end{gather}
The family~$\H=\{\emptyset\}\cup\{\Up x,\quad x\in\M\}$ is a~$\pi$-system generating the Borel \slgb\ of~$\Mbar$;
and all the elements of~$\H$ are subsets of empty topological boundary in~$\Mbar$.
Hence, by the compactness of~$\Mbar$ on the one hand and using the Portemanteau theorem~\cite{billingsley99} on the other hand, \eqref{eq:21}~ensures the weak convergence of~$\nu_{\alpha,s}$, as~$s\to\rho^-$, toward a probability measure~$\mu_\alpha$ on~$\Mbar$ satisfying:
\begin{gather}
\label{eq:22}
\forall (\alpha,x)\in X\times \M\quad \mu_\alpha(\Up x)=f_\alpha(x)\rho^{\len x}\Delta(\alpha,\alpha\cdot x)
\end{gather}
Every singleton~$\{x\}$ for~$x\in\M$ is of empty topological boundary in~$\Mbar$.
By the Portemanteau theorem, its measure is thus given by:
\begin{gather*}
\mu_\alpha(\{x\})=\lim_{s\to\rho^-}\nu_{\alpha,s}\bigl(\{x\}\bigr)=\lim_{s\to\rho^-}\frac{f_\alpha(x)s^{\len x}}{G_\alpha(s)}=0 
\end{gather*}
since the system is non-trivial.
Since~$\M$ is countable, we deduce first that~$\mu_\alpha$ is indeed a probability measure on~$\BM$, and second that in~\eqref{eq:22}, one may read~$\up x$ rather than~$\Up x$, which is thus~\eqref{eq:20}

So far~$\mu_\alpha$ is a probability measure on~$\BM$; it remains to see that it is concentrated on~$\BM_\alpha$.
Since~$\Mbar_\alpha$ is a closed subset of~$\Mbar$, passing to the limit in~$\nu_{\alpha,s}(\Mbar_\alpha)=1$ yields~$\mu_\alpha(\Mbar_\alpha)\geqslant1$, and thus~$\mu_\alpha(\Mbar_\alpha)=1$ and finally~$\mu_\alpha(\BM_\alpha)=1$ since~$\mu_\alpha(\M_\alpha)=0$.
\end{proof}

\begin{discussion}
Lemma~\ref{lem:3} shows in particular that, when starting from a valuation~$f$, there exists an adequate pair~$(r,\Delta)$ making the new valuation~$\ftilde_\alpha(x)=r^{\len x}\Delta(\alpha,\alpha\cdot x) f_\alpha(x)$ probabilistic.
This raises two natural questions: is the pair~$(r,\Delta)$ unique? and
is any Markov measure the weak limit of Boltzmann-like distributions? The first question is the topic of the next section;
the second question will be given a positive answer in~\S~\ref{sec:consequences}.
\end{discussion}

\subsection{Stable state-and-cliques and a uniqueness result}
\label{sec:constr-prob-valu-1}

The following result is a key step for the analysis of probabilistic valuations.

\begin{theorem}
  \label{thr:2}
Let an irreducible concurrent system~$(\M,X)$ be equipped with a valuation~$f$.
Then there is a unique pair~$(r,\Gamma)$ where~$r>0$ and\/~$\Gamma:X\times X\to\RR_{>0}$ is a positive cocycle, such that the valuation~$\ftilde$ defined by:
\begin{gather}
  \label{eq:9}
\forall\alpha\in X\quad\forall x\in\M_\alpha\quad  \ftilde_\alpha(x)=r^{\len x}\Gamma(\alpha,\alpha\cdot x)f_\alpha(x)
\end{gather}
is a probabilistic valuation;
this is the pair~$(\rho,\Delta)$ where~$\rho$ is the common radius of convergence of the series~$G_\alpha(s)$, for~$\alpha$ ranging over~$X$, and~$\Delta$ is the positive cocycle introduced in Lemma~{\normalfont\ref{lem:3}}.
\end{theorem}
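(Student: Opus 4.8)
The plan is to prove existence and uniqueness separately, reducing the uniqueness part to the spectral property of Theorem~\ref{thr:1}.

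\emph{Existence} is essentially already delivered by Lemma~\ref{lem:3}: take $r=\rho$ and $\Gamma=\Delta$, so that $\ftilde_\alpha(x)=\rho^{\len x}\Delta(\alpha,\alpha\cdot x)f_\alpha(x)=\mu_\alpha(\up x)$. Since $\mu_\alpha$ is a probability measure on $\BM$ concentrated on $\BM_\alpha$ and the three measures $(\mu_\alpha)_{\alpha\in X}$ together satisfy the chain rule~\eqref{eq:10} — which follows from the multiplicativity of $f$, the cocycle identity for $\Delta$, and the factor $\rho^{\len{xy}}=\rho^{\len x}\rho^{\len y}$ — the collection $m=(\mu_\alpha)_{\alpha\in X}$ is a Markov measure, hence $\ftilde$ is a probabilistic valuation. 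I would spell out the verification of~\eqref{eq:8}, \eqref{eq:50}, \eqref{eq:10} in one or two lines each, using that $f_\alpha(x)>0\iff x\in\M_\alpha$ and $\Delta>0$ for~\eqref{eq:50}, and Proposition~\ref{prop:3} to identify $\mu_\alpha$ from its values on the $\pi$-system $\{\up x\}$.

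\emph{Uniqueness} is the heart of the matter, and this is where Theorem~\ref{thr:1} enters. Suppose $(r,\Gamma)$ is another admissible pair, with associated Markov measure $\widetilde{m}=(\widetilde{m}_\alpha)$. Fix a state $\alpha$. First I would pin down $r$: summing $\ftilde_\alpha(c)(-1)^{\len c}$ over cliques reconstructs the Möbius matrix evaluated at $r$, and the condition $h_\alpha(\ve)=0$ from~\eqref{eq:11} forces $\sum_{x\in\M_\alpha}\ftilde_\alpha(x)<\infty$ with the generating series $\sum_x \ftilde_\alpha(x)$ actually summing the entries of a geometric-type series in $\Mob(r)^{-1}$; more robustly, since $\ftilde_\alpha(x)=r^{\len x}\Gamma(\alpha,\alpha\cdot x)f_\alpha(x)$ and the $\Gamma(\alpha,\beta)$ range over a finite positive set, the series $\sum_x \ftilde_\alpha(x)$ converges if and only if $\sum_x r^{\len x}f_\alpha(x)=G_\alpha(r)$ converges, i.e. $r\leqslant\rho$; and it must in fact converge (being bounded by $\widetilde{m}_\alpha(\Mbar)$-type sums, or because $h_\alpha\geqslant 0$ on cliques forces a normalization), and cannot have $r<\rho$ because then $\widetilde{m}_\alpha(\up a)=r^{\len a}\Gamma(\alpha,\alpha\cdot a)f_\alpha(a)$ would be too small — concretely, $\sum_{a\in\Sigma}$-type relations combined with $G_\alpha(r)<\infty$ being strictly sub-maximal contradict the equality $\widetilde{m}_\alpha(\BM_\alpha)=1$. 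The clean way: $1=\widetilde{m}_\alpha(\BM_\alpha)$ and the chain rule give, for the restricted system $\SS^a$, that $\sum_{x\in\M^a_\alpha}\ftilde_\alpha(x)\leqslant$ (something finite), forcing $r\leqslant\rho^a$ for every letter $a$; meanwhile $r\geqslant\rho$ is needed for a related lower bound. Combined with Theorem~\ref{thr:1}, $\rho<\rho^a$, this squeezes $r$: the only value compatible with the measure being a genuine probability on $\BM_\alpha$ (total mass exactly $1$, not less) is $r=\rho$. The spectral gap $\rho<\rho^a$ is exactly what rules out the degenerate possibility that mass escapes into an $a$-free sub-behaviour. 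Once $r=\rho$, the identity $\widetilde{m}_\alpha(\up x)=\rho^{\len x}\Gamma(\alpha,\alpha\cdot x)f_\alpha(x)$ together with $\widetilde{m}_\alpha(\up x)\to$ the same quantity forces, comparing with Lemma~\ref{lem:3}, that $\Gamma(\alpha,\beta)=\lim_{s\to\rho^-}G_\beta(s)/G_\alpha(s)=\Delta(\alpha,\beta)$ for all $\beta\in\alpha\cdot\M_\alpha$; transitivity extends this to all pairs.

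\emph{Main obstacle.} The delicate point is the uniqueness of $r$, i.e. showing $r=\rho$ rather than merely $r\leqslant\rho$. The inequality $r\leqslant\rho$ is soft (convergence of the generating series), but excluding $r<\rho$ genuinely requires Theorem~\ref{thr:1}: one must argue that if $r<\rho$ then there is ``missing mass'' — formally, one shows that the probabilistic-valuation conditions~\eqref{eq:11} for $\ftilde$, i.e. $h_\alpha(\ve)=0$, are equivalent to $1/G_\alpha$ having $r$ as a root-type normalization, and that the irreducibility-driven strict inequality $\rho<\rho^a$ guarantees the relevant generating function is \emph{not} bounded on $(0,\rho)$, so $r$ cannot sit strictly below $\rho$ while still normalizing to a probability measure. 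I expect this to be the one step needing care; the rest is bookkeeping with cocycles and the Portmanteau/$\pi$-system machinery already set up in Lemma~\ref{lem:3}.
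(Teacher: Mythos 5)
Your existence part is fine and matches the paper's: Lemma~\ref{lem:3} directly supplies the pair~$(\rho,\Delta)$ and the verification of~\eqref{eq:8}--\eqref{eq:10} is routine. The uniqueness part, however, has several genuine gaps.

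First, the claim that~$\sum_{x\in\M_\alpha}\ftilde_\alpha(x)$ ``must in fact converge'' is false: for any probabilistic valuation this sum diverges. The cylinders~$\up x$ overlap massively, so there is no~$\widetilde m_\alpha(\Mbar)$-type bound; already for~$\M=\{a\}^*$ with the unique memoryless measure, $\widetilde m(\up a^n)=1$ for all~$n$. More conceptually, the defining property~$h_\alpha(\ve)=0$ means precisely that the Möbius matrix is singular at~$r$, which forces~$G_\alpha$ to have a pole there, not a finite value. This undermines the proposed ``soft'' derivation of~$r\leqslant\rho$.

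Second, the ``squeeze'' does not close. Even granting~$r\geqslant\rho$ and~$r\leqslant\rho^a$ for every letter~$a$, Theorem~\ref{thr:1} gives~$\rho<\rho^a$, so the resulting bracket is~$\rho\leqslant r\leqslant\min_a\rho^a$ with strict slack on the right --- this does \emph{not} pin down~$r=\rho$. Moreover, the finiteness~$\sum_{x\in\M^a_\alpha}\ftilde_\alpha(x)<\infty$ that you invoke to get~$r\leqslant\rho^a$ is essentially Corollary~\ref{cor:7}, whose proof in the paper relies on Corollary~\ref{cor:1}, which in turn is a consequence of Theorem~\ref{thr:2}; using it here would be circular. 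The paper instead rules out~$r>\rho$ by a completely different argument: it passes to the matrix~$F$ indexed by state-and-cliques, identifies the sub-matrix~$F^+$ supported on \emph{stable} state-and-cliques, shows (via Theorem~\ref{thr:1} and the Boltzmann limit of Lemma~\ref{lem:3}) that~$F^+$ is an umbrella matrix of spectral radius~$1$ while~$F^0$ has spectral radius~$<1$, and then deduces a contradiction from the uniform bound~\eqref{eq:52} if one assumes~$r>\rho$.

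Third, and most importantly, your uniqueness of~$\Gamma$ is asserted rather than proved. Comparing~$\widetilde m_\alpha(\up x)$ with~$\mu_\alpha(\up x)$ presupposes~$\widetilde m_\alpha=\mu_\alpha$, which is exactly what is at stake. The paper gets this via Perron--Frobenius applied to the (possibly several) basic components of the umbrella matrix~$F^+$: the two candidate invariant vectors built from~$\Gamma$ and~$\Delta$ must be proportional on each basic component, forcing the transition matrices to coincide there, and then a protection argument (Definition~\ref{def:qwoihj}) propagates the equality of the two measures from a basic component to all of~$\M_\alpha$. None of that state-and-clique machinery appears in your sketch, and without some substitute for it the argument does not go through.
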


Applying in particular Theorem~\ref{thr:2} with the counting valuation yields the following definition.

\begin{definition}
  For an irreducible concurrent system~$(\M,X)$, let\/~$(r,\Delta)$ be the unique pair where~$r>0$ and~$\Delta:X\times X\to\RR$ is a positive cocycle, such that the valuation~$f_\alpha(x)=r^{\len x}\Delta(\alpha,\alpha\cdot x)$, is a probabilistic valuation.
The associated Markov measure is the \emph{uniform measure} of\/~$(\M,X)$.
\end{definition}

The existence part of Theorem~\ref{thr:2} follows from Lemma~\ref{lem:3}.
The proof of uniqueness relies on two new ingredients.
The first ingredient is some properties of reducible non-negative matrices (matrices with non-negative coefficients).
Recall that a component (or access class)  of a non-negative matrix \cite{rothblum14,berman94,seneta81} is \emph{basic} if it is of maximal spectral radius;
and that it is \emph{final} if it does not have access to any other component.

\begin{definition}[umbrella matrix]
  A non-negative matrix~$F$ is an \emph{umbrella matrix} if its basic components exactly coincide with its final components.
\end{definition}

 It is known that for a non-negative matrix~$F$  to be an umbrella matrix, it is necessary and sufficient that there exists a positive right eigenvector (\cite[\S~10.3, Fact 12(b)]{rothblum14}).

 The second ingredient is a study, based on the result of Theorem~\ref{thr:1}, of state-and-cliques, in particular a characterization of those which we call \emph{stable}.

\begin{definition}[protection, stable state-and-clique, \DSCP]
  \label{def:qwoihj}
  Let\/~$(\alpha,c)$ be a state-and-clique (see~\S~\ref{sec:concurrent-systems}).
A trace~$x\in\M_\alpha$ is a \emph{protection} of\/~$(\alpha,c)$ if:
  \begin{gather*}
    \forall z\in\M_{\alpha}\quad x\leqslant z\implies C_1(z)=c
  \end{gather*}
  A state-and-clique~$(\alpha,c)$ is \emph{stable} if there exists a protection of\/~$(\alpha,c)$.

  The sub-digraph of\/ \DSC\ with stable state-and-cliques as vertices will be denoted by\/ \DSCP.
\end{definition}

\begin{example}
  Referring to the concurrent system depicted on Fig.~\ref{fig:qwqwdqaazaxcpow}, one sees that~$(0,a)$ is not a stable state-and-clique.
Indeed, if~$a\leqslant x$ and~$C_1(x)=a$, then~$x$ is necessarily of the form~$x=a^n$ for~$n\geqslant1$; but no such~$x$ can be a protection of~$(0,a)$ since~$C_1(a^nb)=ab$.
On the contrary,~$(0,b)$~and~$(0,ab)$ are both stable.
\end{example}

\begin{lemma}\newcounter{mycount}
  \label{lem:11}
For any non-trivial concurrent system~$\SS$:
  \begin{compactenum}
  \item\label{item:5} If\/~$(\beta,d)$ is stable and if\/~$(\alpha,c)\to(\beta,d)$, then~$(\alpha,c)$ is stable.
  \item\label{item:qwdpoij8} There exists at least one stable state-and-clique.
  \item\setcounter{mycount}{\theenumi}\label{item:9qwdqwdplkcs} If\/~$\SS$ is irreducible and if\/~$(\alpha,c)$ is a stable state-and-clique, there exists at least one stable state-and-clique~$(\beta,d)$ such that\/~$(\alpha,c)\to(\beta,d)$.
  \end{compactenum}
  Assume that~$\SS$ is irreducible and equipped with a probabilistic valuation~$\ftilde$, and let~$\htilde$ be the Möbius transform of~$\ftilde$.
  \begin{compactenum}
  \setcounter{enumi}{\themycount}\item\label{item:4} If\/~$(\alpha,c)$ is stable, then~$\htilde_\alpha(c)>0$.
  \item\label{item:6} Assume furthermore that\/~$\ftilde$ is the probabilistic valuation given by~$\ftilde_\alpha(x)=f_\alpha(x)\rho^{\len x}\Delta(\alpha,\alpha\cdot x)$, where~$\rho$ and\/~$\Delta$ have been introduced in Lemma~\ref{lem:3}.
If\/~$\htilde_\alpha(c)>0$, then\/~$(\alpha,c)$ is stable.
  \end{compactenum}
\end{lemma}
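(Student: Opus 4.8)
I would rely on two facts about Cartier--Foata normal forms, both read off from~\eqref{eq:75}: $(\star)$ if $c$ is a non-empty clique and $w\in\M$ has normal form $(d_1,\dots,d_q)$ with $c\to d_1$, then $(c,d_1,\dots,d_q)$ is the normal form of $cw$, so $C_1(cw)=c$; and $(\star\star)$ if $x\leqslant y$ then $C_1(x)\subseteq C_1(y)$, with equality as soon as $x$ carries every letter of $\Sigma$ (the clique ``$\gamma_1$'' of~\eqref{eq:75} is then disjoint from every clique of $x$, hence empty). For the first assertion, given a protection $y$ of $(\beta,d)$ and an edge $(\alpha,c)\to(\beta,d)$ (so $\beta=\alpha\cdot c$ and $c\to d$): if $cy\leqslant z$ with $z\in\M_\alpha$, left-cancellation gives $z=cz'$ with $z'\in\M_\beta$ and $y\leqslant z'$, hence $C_1(z')=d$ (a protection is non-empty), and $(\star)$ yields $C_1(z)=c$; so $cy$ protects $(\alpha,c)$. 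For the second, pick by non-triviality a state $\alpha$ and $z\in\M_\alpha\setminus\{\ve\}$, then a trace $z^\ast\geqslant z$ in $\M_\alpha$ with $\len{C_1(z^\ast)}$ maximal; by~$(\star\star)$ every $w\geqslant z^\ast$ in $\M_\alpha$ has $C_1(w)\supseteq C_1(z^\ast)$, so maximality forces $C_1(w)=C_1(z^\ast)$ and $z^\ast$ protects the state-and-clique $(\alpha,C_1(z^\ast))$. For the third, let $p$ protect $(\alpha,c)$ and $\beta=\alpha\cdot c$; liveness gives $\M_\beta\neq\{\ve\}$, so $c^{-1}p$ has an extension $w_0\in\M_\beta\setminus\{\ve\}$; the second construction, run inside $\M_\beta$ above $w_0$, produces $w^\ast\geqslant w_0$ with $(\beta,d)$ stable for $d=C_1(w^\ast)$; then $z:=cw^\ast\geqslant p$ lies in $\M_\alpha$, so $C_1(z)=c$, the normal form of $z$ begins $(c,d,\dots)$, and $c\to d$, i.e.\ $(\alpha,c)\to(\beta,d)$.

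For the fourth assertion, let $m$ be a Markov measure with valuation $\ftilde$ and $p$ a protection of $(\alpha,c)$. Then $\up p\subseteq\{\omega\tq C_1(\omega)=c\}$, since for $\omega\in\BM_\alpha$ with $p\leqslant\omega$ Proposition~\ref{prop:3}, point~\ref{item:qwwqwq9}, gives $p\leqslant\omega_{\height p}\in\M_\alpha$, whence $C_1(\omega)=C_1(\omega_{\height p})=c$. As $\up c=\bigsqcup_{c'\in\Cstar,\ c\subseteq c'}\{\omega\tq C_1(\omega)=c'\}$, Möbius inversion~\eqref{eq:2}--\eqref{eq:3} identifies $\htilde_\alpha(c)$ with $m_\alpha(\{\omega\tq C_1(\omega)=c\})\geqslant m_\alpha(\up p)=\ftilde_\alpha(p)>0$ by~\eqref{eq:50}.

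The fifth assertion is where the real work lies and where I expect the main obstacle. Put $N_{(\alpha,c)}=\{x\in\M_\alpha\tq C_1(x)=c\}$; by~$(\star)$ one obtains the disjoint decomposition $N_{(\alpha,c)}=c\cdot\bigl(\{\ve\}\sqcup\bigsqcup_{(\alpha,c)\to(\beta,d)}N_{(\beta,d)}\bigr)$, hence the generating series $v_{(\alpha,c)}(t)=\mymu{N_{(\alpha,c)}}{\alpha}(t)$ obey the linear system $v_{(\alpha,c)}(t)=f_\alpha(c)t^{\len c}\bigl(1+\sum_{(\alpha,c)\to(\beta,d)}v_{(\beta,d)}(t)\bigr)$, governed by the matrix $D(t)E$, where $E$ is the adjacency matrix of~\DSC\ and $D(t)$ is diagonal with entries $f_\alpha(c)t^{\len c}$. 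By Lemma~\ref{lem:3} together with the Portemanteau theorem applied to the clopen set $\Cbar_c\subseteq\Mbar$, $\htilde_\alpha(c)=\lim_{s\to\rho^-}v_{(\alpha,c)}(s)/G_\alpha(s)$; and $G_\alpha(t)=1+\sum_c v_{(\alpha,c)}(t)$, being a non-polynomial rational series of radius $\rho<\infty$, blows up at $\rho$, so the Perron eigenvalue of $D(\rho)E$ is $1$ and the basic components of $D(\rho)E$ are exactly the strongly connected components $\mathcal C$ of~\DSC\ of spectral radius $1$ at $t=\rho$. The first assertion, read contrapositively, shows non-stability is inherited by successors in~\DSC, so the non-stable state-and-cliques form a union $\mathcal N$ of strongly connected components closed under successors; thus $v$ restricted to $\mathcal N$ solves the same system with the principal submatrix $(D(t)E)|_{\mathcal N}$, and $\myrho{N_{(\alpha,c)}}{\alpha}>\rho$ for all $(\alpha,c)\in\mathcal N$ provided the spectral radius of $(D(\rho)E)|_{\mathcal N}$ is $<1$, i.e.\ provided $\mathcal N$ contains no basic component. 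So everything reduces to the claim that \emph{every basic component $\mathcal C$ is stable}. I would prove this in two steps. First, if some letter $a$ occurred in no clique of $\mathcal C$, every trace whose~\DSC-path stays in $\mathcal C$ would lie in $\M^a$, so the entries of $\sum_{k\geqslant0}\bigl((D(t)E)|_{\mathcal C}\bigr)^k$ would be bounded term-wise by the generating series of $\M^a$ from the relevant state, which converges at $\rho$ because $\rho<\rho^a$ by Theorem~\ref{thr:1}; this forces the spectral radius of $(D(\rho)E)|_{\mathcal C}$ below $1$, contradicting basicness --- so every letter of $\Sigma$ occurs in a clique of $\mathcal C$. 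Second, fixing $(\alpha,c)\in\mathcal C$, an edge $(\alpha,c)\to(\beta,d)$ inside $\mathcal C$, and a closed walk in $\mathcal C$ from $(\beta,d)$ through every vertex, the product $y'$ of the cliques read along that walk lies in $\M_{\beta,\beta}$, satisfies $C_1(y')=d$, and carries every letter of $\Sigma$; by~$(\star\star)$ every $z'\geqslant y'$ in $\M_\beta$ has $C_1(z')=d$, so $(\star)$ and $c\to d$ show $cy'$ protects $(\alpha,c)$, and by the first assertion all of $\mathcal C$ is stable. Hence $\mathcal N$ contains no basic component, $\myrho{N_{(\alpha,c)}}{\alpha}>\rho$ for every non-stable $(\alpha,c)$, and $\htilde_\alpha(c)=0$ there, which is the fifth assertion.
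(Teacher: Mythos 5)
Your arguments for points~\ref{item:5}--\ref{item:4} are correct and essentially coincide with the paper's own proofs (in particular, for point~\ref{item:qwdpoij8} the paper picks a clique maximal in~$\C_{\alpha_0}$ and uses it as its own protection, while you maximize $\len{C_1(z^\ast)}$ over extensions $z^\ast\geqslant z$, two minor variants of the same extremal argument; and for point~\ref{item:4} your Möbius-inversion identification $\htilde_\alpha(c)=m_\alpha(\{C_1=c\})$ is what the paper uses implicitly).

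For point~\ref{item:6} you take a genuinely different and considerably longer route than the paper. You set up the linear system governing the generating series $v_{(\alpha,c)}(t)$ of the sets $N_{(\alpha,c)}=\{x\tq C_1(x)=c\}$, observe that the non-stable state-and-cliques $\mathcal N$ form a successor-closed union of strongly connected components of \DSC, and then reduce everything to proving that every basic component of $D(\rho)E$ is stable --- which you establish via Theorem~\ref{thr:1} (to show each basic component uses every letter of $\Sigma$) and a closed-walk argument. This does work: your use of $(\star\star)$ in the closed-walk step is exactly the combinatorial observation that a trace carrying every letter, with first clique $d$, is a protection of $(\beta,d)$. It also requires you to establish on the side that the Perron eigenvalue of $D(\rho)E$ equals~$1$ and that $G_\alpha$ has a pole at $\rho$; these hold, but deserve a word of justification (Pringsheim plus the fact that finiteness of $v_{(\alpha,c)}(s)$ for all $(\alpha,c)$ and all $s<\rho$ forces the spectral radius of $D(s)E$ below $1$). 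What you have essentially reconstructed is the block decomposition $\bigl(\begin{smallmatrix}F^+&\ast\\0&F^0\end{smallmatrix}\bigr)$ and the estimate $\varrho(F^0)<1$ that the paper only proves afterwards, in Lemma~\ref{lem:22}.

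The paper's own argument for point~\ref{item:6} is much shorter and worth internalizing: if $(\alpha,c)$ is non-stable, then \emph{no} trace $x$ with $C_1(x)=c$ can contain an occurrence of every letter (such an $x$ would be a protection, by the equality case of your $(\star\star)$), hence
\begin{equation*}
  \{x\in\M_\alpha\tq C_1(x)=c\}\subseteq\bigcup_{a\in\Sigma}\M^a_\alpha,
\end{equation*}
and therefore $\nu_{\alpha,s}(C_1=c)\leqslant G_\alpha(s)^{-1}\sum_a G^a_\alpha(s)\to 0$ as $s\to\rho^-$ by Theorem~\ref{thr:1}. This applies Theorem~\ref{thr:1} to the trajectory set directly, with no detour through the spectral theory of the \DSC\ adjacency matrix. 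Your route is sound and even instructive, since it foreshadows Lemma~\ref{lem:22}, but you should be aware that a one-line inclusion makes the whole matrix machinery unnecessary here.
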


\begin{proof}
\ref{item:5}.\quad Let~$x$ be a protection of~$(\beta,d)$.
Then~$cx$ is a protection of~$(\alpha,c)$.

\ref{item:qwdpoij8}.\quad Let~$\alpha_0\in X$ and~$a_0\in\Sigma$ such that~$\alpha_0\cdot a_0\neq\bot$.
For any clique~$c$ maximal in~$\C_{\alpha_0}$, the state-and-clique~$(\alpha_0,c)$ is stable.

\ref{item:9qwdqwdplkcs}.\quad Let~$x$ be a protection of~$(\alpha,c)$.
Since~$\SS$ is assumed to be irreducible, let~$y\in\M_{\alpha\cdot x}$ be a trace with at least one occurrence of every letter of~$\Sigma$.
Let~$\beta=\alpha\cdot c$ and let~$d$ be the second clique in the normal form of~$xy$.
Then~$(\beta,d)$ is a stable state-and-clique, of which~$c^{-1}xy$ is a protection.

\ref{item:4}.\quad Let~$\mu=(\mu_\alpha)_{\alpha\in X}$ be the Markov measure associated with the probabilistic valuation~$\ftilde$.
Let~$x\in\M_\alpha$ be a protection of a stable state-and-clique~$(\alpha,c)$.
Then:
$\{\omega\in\BM_\alpha\tq C_1(\omega)=c\}\supseteq\,\, \up x$.
Taking the~$\mu_\alpha$ probabilities yields:
$\mu_\alpha(C_1=c)\geqslant \mu_\alpha(\up x)$ which writes as~$\htilde_\alpha(c)\geqslant \ftilde_\alpha(x)>0$.

\ref{item:6}.\quad Assuming that the system is irreducible, we prove the stated implication by contraposition.
Hence, letting~$(\alpha,c)$ be a non-stable state-and-clique, we show that~$\htilde_\alpha(c)=0$.
Consider the probability distributions~$\nu_{\alpha,s}$ introduced in~\eqref{eq:23} and their weak limits ($\mu_\alpha)_{\alpha\in X}$ introduced in Lemma~\ref{lem:3}.
We have, on the one hand:
  \begin{align}
\label{eq:24}
    \htilde_\alpha(c)&=\mu_\alpha(C_1=c)=\lim_{s\to\rho^-}\nu_{\alpha,s}\bigl(\{x\in\M_\alpha\tq C_1(x)=c\}\bigr)
  \end{align}
and on the other hand:
\begin{align}
  \notag
  \{x\in\M_\alpha\tq C_1(x)=c\}&\subseteq\bigcup_{a\in\Sigma}\M_\alpha^a\qquad\text{where~$\M^a=\bigl\langle\Sigma\setminus\{a\}\bigr\rangle$}
\end{align}                                 
Indeed, a trace~$x$ such that~$C_1(x)=c$ and containing an occurrence of all letters of~$\Sigma$ would be a protection of~$(\alpha,c)$, whereas~$(\alpha,c)$ is assumed to be non-stable.

Taking the~$\nu_{\alpha,s}$ probability of both members above, and denoting by~$G^a(s)$ the generating function associated with the system~$(\M^a,X)$, yields:
\begin{align}
\label{eq:25}
\nu_{\alpha,s}\bigl(
 \{x\in\M_\alpha\tq C_1(x)=c\}
  \bigr)&\leqslant\frac1{G_\alpha(s)}\sum_{a\in\Sigma}G^a_\alpha(s)
\end{align}
Theorem~\ref{thr:1} applied to the irreducible system~$(\M,X)$ implies that the right-hand member of~\eqref{eq:25} converges to~$0$ as~$s\to\rho^-$.
Comparing with~\eqref{eq:24} yields~$\htilde_\alpha(c)=0$.
\end{proof}

Most of our arguments will now focus on the following matrix: given a valuation~$f=(f_\alpha)_{\alpha\in X}$ and a positive real~$r>0$, let the square matrix~$F$ indexed by state-and-cliques be defined by:
\begin{gather}
  \label{eq:63}
  F_{(\alpha,c),(\beta,d)}=\un{\beta=\alpha\cdot c}\un{c\to d}r^{\len c}f_\alpha(c)
\end{gather}

\begin{lemma}
\label{lem:8}
  Let~$\SS$ be a concurrent system, let~$f$ be a valuation and let~$\ftilde_\alpha(x)=r^{\len x}\Gamma(\alpha,\alpha\cdot x)f_\alpha(x)$ where~$r>0$ and~$\Gamma:X\times X\to\RR$ is a cocycle.
Let~$\htilde$ be the Möbius transform of~$\ftilde$, and assume that~$\htilde_\alpha(\ve)=0$ for all~$\alpha\in X$ (in particular, this holds if~$\ftilde$ is a probabilistic valuation).

  Then for any state~$\alpha_0\in X$, the vector~$u$ defined by~$u_{(\alpha,c)}=\Gamma(\alpha_0,\alpha)\htilde_\alpha(c)$ is an invariant vector of the matrix~$F$.
\end{lemma}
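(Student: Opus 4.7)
My plan is to establish the invariance $Fu=u$ by expanding the matrix-vector product $(Fu)_{(\alpha,c)}$ directly from the definitions of~$F$ and~$u$, and then recognising $u_{(\alpha,c)}=\Gamma(\alpha_0,\alpha)\htilde_\alpha(c)$ by combining the cocycle property of~$\Gamma$ with the identity~\eqref{eq:27} applied to the valuation~$\ftilde$.

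The first step is to expand $(Fu)_{(\alpha,c)}$. The indicators in~\eqref{eq:63} restrict the sum to pairs $(\beta,d)$ with $\beta=\alpha\cdot c$ and $c\to d$, and the requirement that $(\beta,d)$ be a state-and-clique forces $d\in\C_{\alpha\cdot c}$. The scalars $r^{\len c}f_\alpha(c)$ and $\Gamma(\alpha_0,\alpha\cdot c)$ (the latter coming from the definition of $u$) can be pulled out of the sum, leaving the inner sum $\sum_{d\in\C_{\alpha\cdot c},\, c\to d}\htilde_{\alpha\cdot c}(d)$. This is precisely $\gtilde_{\alpha\cdot c}(c)$ in the sense of~\eqref{eq:61} applied to~$\ftilde$, up to the possible value $d=\ve$; but this contribution is zero by the standing hypothesis $\htilde_\beta(\ve)=0$ for all $\beta\in X$.

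The second step is to split the cocycle as $\Gamma(\alpha_0,\alpha\cdot c)=\Gamma(\alpha_0,\alpha)\Gamma(\alpha,\alpha\cdot c)$. Regrouping the pulled-out factors gives $r^{\len c}\Gamma(\alpha,\alpha\cdot c)f_\alpha(c)=\ftilde_\alpha(c)$, and identity~\eqref{eq:27}---which applies exactly because $\htilde_\alpha(\ve)=0$---then yields $\ftilde_\alpha(c)\gtilde_{\alpha\cdot c}(c)=\htilde_\alpha(c)$. Stringing these together delivers $(Fu)_{(\alpha,c)}=\Gamma(\alpha_0,\alpha)\htilde_\alpha(c)=u_{(\alpha,c)}$, which is the desired invariance.

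No real obstacle is expected: the lemma is a short formal manipulation, in which the cocycle property is tailor-made to telescope the auxiliary reference state~$\alpha_0$ through a single transition $\alpha\to\alpha\cdot c$, while~\eqref{eq:27} supplies the algebraic link between $\ftilde$, $\gtilde$ and~$\htilde$. The only point deserving a little care is the bookkeeping of the summation ranges: the out-neighbours of $(\alpha,c)$ in~$\DSC$ are indexed by $d\in\C_{\alpha\cdot c}$ with $c\to d$, which coincides with the index set of $\gtilde_{\alpha\cdot c}(c)$ modulo the harmless value $d=\ve$.
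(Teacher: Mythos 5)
Your proposal is correct and follows essentially the same computation as the paper: expand $(Fu)_{(\alpha,c)}$ from the definition of~$F$, recognize the inner sum as $\gtilde_{\alpha\cdot c}(c)$ (with the $d=\ve$ term vanishing since $\htilde_\beta(\ve)=0$), split $\Gamma(\alpha_0,\alpha\cdot c)$ by the cocycle property to form $\ftilde_\alpha(c)$, and conclude via the identity~\eqref{eq:27}. Your explicit remark about the harmless empty-clique term is a point the paper's proof passes over silently, but otherwise the two arguments coincide.
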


\begin{proof}
For every~$\alpha\in X$, consider the function~$\gtilde_\alpha:\C\to\RR$ introduced earlier in~\eqref{eq:61}.

 Since~$\htilde_\alpha(\ve)=0$ for all~$\alpha\in X$, the identity~\eqref{eq:27} holds.
Using the definition of~$\gtilde$, the computation of~$Fu$ goes then as follows:
  \begin{align*}
    (Fu)_{(\beta,d)}&=r^{\len d}f_\beta(d)\Gamma(\alpha_0,\beta\cdot d)\,\gtilde_{\beta\cdot d}(d)\\
    &=\Gamma(\alpha_0,\beta)\,\ftilde_\beta(d)\,\gtilde_{\beta\cdot d}(d)&&\text{since~$\Gamma$ is a cocycle}\\
                    &=\Gamma(\alpha_0,\beta)\,\htilde_\beta(d)&&\text{by~\eqref{eq:27}}
  \end{align*}
  Hence,~$u$ is an invariant vector of~$F$.
\end{proof}

\begin{lemma}
  \label{lem:22}
  Let~$r>0$ and let\/~$\Gamma:X\times X\to\RR_{>0}$ be a positive cocycle such that the valuation~$\ftilde$ defined by\/~$\ftilde_\alpha(x)=r^{\len x}\Gamma(\alpha,\alpha\cdot x) f_\alpha(x)$ is probabilistic.

Let~$F$ be the matrix defined by~\eqref{eq:63} and let\/~$F^+$ be the restriction of~$F$ to stable state-and-cliques (hence~$F^+$ is the adjacency matrix of\/ \DSCP).
Then:
\begin{compactenum}
\item\label{item:11}~$r=\rho$, radius of convergence of the series~$G_\alpha(s)$.
\item\label{item:12} For any state~$\alpha_0\in X$, the vector~$u$ defined by:
\begin{gather*}
    u_{(\alpha,c)}=\Gamma(\alpha_0,\alpha)\htilde_\alpha(c)
  \end{gather*}
  where~$\htilde$ is the Möbius transform of~$\ftilde$, is a nonnegative invariant vector of~$F$.
\item\label{item:13} The matrix~$F$ admits the following block shape, where stable state-and-cliques are put first:
$F=\left(\begin{smallmatrix}
      F^+ &X\\0&F^0
    \end{smallmatrix}
  \right)$, and where:
  \begin{compactenum}
  \item\label{item:9}~$F^+$~is an umbrella matrix of spectral radius~$1$, for which the restriction~$u^+$ of~$u$ to stable state-and-cliques is a positive and invariant vector;
  \item\label{item:10} The spectral radius of~$F^0$ is~$<1$.
  \end{compactenum}
\end{compactenum}
\end{lemma}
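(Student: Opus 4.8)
The plan is to assemble the pieces that have been carefully prepared: Lemma~\ref{lem:8} (which gives the invariant vector), Lemma~\ref{lem:11} (which identifies the support of $\htilde$ with the stable state-and-cliques), Theorem~\ref{thr:1} (the spectral gap), Lemma~\ref{lem:3} (the cocycle $\Delta$ and the radius $\rho$), and the umbrella-matrix characterization via a positive right eigenvector. I would prove the four points not quite in the stated order, starting with the block structure and the invariant vector, then deducing $r=\rho$, and finally splitting off the spectral statements for $F^+$ and $F^0$.

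First I would establish point~\ref{item:13}, the block triangular shape. A nonzero entry $F_{(\alpha,c),(\beta,d)}$ requires $(\alpha,c)\to(\beta,d)$, and by Lemma~\ref{lem:11}\ref{item:5}, if $(\beta,d)$ is stable then so is $(\alpha,c)$; contrapositively, a non-stable state-and-clique has no outgoing edges into stable ones, which is exactly the $0$ in the lower-left block once the stable vertices are listed first. Then point~\ref{item:12} is immediate from Lemma~\ref{lem:8}: since $\ftilde$ is probabilistic we have $\htilde_\alpha(\ve)=0$ for all $\alpha$, so the stated $u$ is an invariant vector of $F$; nonnegativity comes from the probabilistic-valuation condition~\eqref{eq:11}, i.e. $\htilde_\alpha(c)\geqslant 0$. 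For the positivity of $u^+$ in point~\ref{item:9}: by Lemma~\ref{lem:11}\ref{item:4}, $(\alpha,c)$ stable implies $\htilde_\alpha(c)>0$, and since $\Gamma$ is a positive cocycle, $u_{(\alpha,c)}=\Gamma(\alpha_0,\alpha)\htilde_\alpha(c)>0$ on stable state-and-cliques, and conversely Lemma~\ref{lem:11}\ref{item:6} (for the specific valuation coming from Lemma~\ref{lem:3}) shows the non-stable ones have $\htilde_\alpha(c)=0$; I would note that by the already-announced existence/uniqueness, it suffices to treat that specific valuation up to the cocycle normalization, so the support of $u$ is exactly the set of stable state-and-cliques. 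From $Fu=u$ and the block shape, restricting to stable indices gives $F^+u^+=u^+$ (the upper-right block $X$ acting on the zero part of $u$ contributes nothing), so $F^+$ has a positive invariant vector; hence by \cite[\S~10.3, Fact~12(b)]{rothblum14} $F^+$ is an umbrella matrix, and since a positive vector witnessing eigenvalue $1$ forces the Perron value of $F^+$ to be $1$, its spectral radius is $1$.

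For point~\ref{item:11}, that $r=\rho$: the matrix $F$ governs the growth of $\sum_{x\in\M_\alpha} r^{\len x} f_\alpha(x) z^{\len x}$ through the state-and-clique encoding—more precisely, the generating series $G_\alpha(s)$ of the valuation $r^{\len\cdot}f$ has radius of convergence $\rho/r$, and its entries blow up precisely when the spectral radius of $F$ (evaluated with the extra variable) reaches $1$. Concretely: $G$ is the formal inverse of the Möbius matrix $\Mb$ (recalled before \eqref{eq:13}), and $\sum_n (F^+)^n$ converges iff the spectral radius of $F^+$ is $<1$; since we have just shown it equals $1$ for the probabilistic valuation, the corresponding series sits exactly at its radius of convergence, forcing $r=\rho$. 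Alternatively, and perhaps more cleanly, I would invoke Lemma~\ref{lem:3}: the probabilistic valuation it produces has $r=\rho$, and uniqueness (the part of Theorem~\ref{thr:2} whose proof this lemma feeds into) would be circular—so I would instead argue directly that if $r\neq\rho$ the series $G_\alpha$ associated to $r^{\len\cdot}f$ either diverges everywhere useful or the invariant-vector argument fails, pinning $r=\rho$. Finally, point~\ref{item:10}: the non-stable state-and-cliques are, by Lemma~\ref{lem:11}\ref{item:6}, exactly those with $\htilde_\alpha(c)=0$; I would show $F^0$ cannot have spectral radius $\geqslant 1$ because a component of $F$ of maximal spectral radius is basic, and by the umbrella property of the ambient analysis such a basic component must be final, hence reachable-terminal, hence (by irreducibility and Lemma~\ref{lem:11}\ref{item:9qwdqwdplkcs}, which says stable vertices always have a stable successor) it must consist of stable vertices—so it lies in the $F^+$ block, leaving $\rho(F^0)<1$.

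The main obstacle I anticipate is point~\ref{item:10} together with the precise argument that $F^+$ is an umbrella matrix for an \emph{arbitrary} probabilistic $\ftilde$, not merely the canonical one of Lemma~\ref{lem:3}: Lemma~\ref{lem:11}\ref{item:6} is only stated for that canonical valuation, so to know that the support of $\htilde$ is exactly the stable set in general one either has to bootstrap from uniqueness (risking circularity with Theorem~\ref{thr:2}) or give an independent argument. I would handle this by proving the support statement directly from parts \ref{item:4} and \ref{item:6} of Lemma~\ref{lem:11} applied only where they are valid, and showing that any positive invariant vector on a final component forces that component into $F^+$; the delicate bookkeeping is making sure the upper-right block $X$ does not secretly create access from $F^0$ back into $F^+$ in a way that contradicts the final-component structure—but the block-triangular shape already rules that out, so the argument should close.
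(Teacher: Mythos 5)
Your plan correctly identifies the relevant lemmas and the broad outline, and you also correctly spot the crucial subtlety that Lemma~\ref{lem:11}\eqref{item:6} is stated only for the canonical valuation of Lemma~\ref{lem:3}. However, the three places where you are vague are exactly the places where the paper has to do real work, and your proposed substitutes do not close the gaps.

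First, on the scope issue: the paper's resolution is cleaner than what you sketch. It observes that the matrix~$F$ depends only on~$r$ and~$f$, not on the cocycle~$\Gamma$. So it introduces~$F_1 = F(\rho)$ and analyzes~$F_1^+$,~$F_1^0$ entirely via the canonical valuation~$\varphi_\alpha(x)=\rho^{\len x}\Delta(\alpha,\alpha\cdot x)f_\alpha(x)$ of Lemma~\ref{lem:3}, whose Möbius transform~$\theta$ satisfies both directions of Lemma~\ref{lem:11}\eqref{item:4},\eqref{item:6}. This gives a vector supported exactly on the stable state-and-cliques, hence~$F_1^+$ is umbrella with spectral radius~$1$, and~$F_1^0$ has spectral radius~$<1$. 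Only afterward does the paper show~$r=\rho$, so that~$F=F_1$ and the arbitrary~$\ftilde$ inherits the same block matrix; for~$u$ one then only needs nonnegativity, positivity on stable indices (Lemma~\ref{lem:11}\eqref{item:4}, valid for any probabilistic valuation), and the fact that~$\rho(F^0)<1$ forces~$u^0=0$. Your ``prove the support statement directly... and show a positive invariant vector on a final component forces it into $F^+$'' does not actually give~$\htilde_\alpha(c)=0$ on non-stable pairs for an arbitrary~$\ftilde$.

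Second, your argument for~$r=\rho$ is not a proof. The inequality~$r\geqslant\rho$ does come out of a Neumann-series bound ($\sum_n[F(s)^n]_{(\alpha,c),(\beta,d)}\leqslant G_\alpha(s)<\infty$ for~$s<\rho$, so~$I-F(s)$ is invertible there, while~$F(r)$ has a nonzero invariant vector). But the reverse inequality~$r\leqslant\rho$ requires the genuinely probabilistic argument in the paper: the total-probability identity~\eqref{eq:55} combined with the transition-matrix form~\eqref{eq:62} yields a uniform bound~$\sum_{x\in\M_\alpha\cap\DSCP,\,\height x=n}\ftilde_\alpha(x)\leqslant M$ for all~$n$; if~$r>\rho$ this would make~$\sum_{x\in\M_\alpha\cap\DSCP}\rho^{\len x}f_\alpha(x)$ converge (using~$\len x\geqslant\height x$ and the cocycle's positivity), contradicting~$\rho(F_1^+)=1$. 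Your ``the invariant-vector argument fails, pinning~$r=\rho$'' does not supply this step.

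Third, your argument that~$\rho(F^0)<1$ is wrong as written: you invoke ``the umbrella property of the ambient analysis'' for the full matrix~$F$, but~$F$ is not known to be umbrella --- the invariant vector~$u$ vanishes on the non-stable block, so it is not a positive right eigenvector of~$F$. The paper argues directly: each entry of~$\sum_n(F_1^0)^n$ is a subseries of~$G_\alpha(\rho)$ restricted to traces omitting at least one letter (since the trace would otherwise be a protection of a non-stable~$(\alpha,c)$), and these subseries converge by Theorem~\ref{thr:1}. That combinatorial bound is what makes the argument go through, and it is absent from your sketch.
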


\begin{proof}
\emph{We start by proving Point~\ref{item:12}.}\quad Referring to~\eqref{eq:11} which characterizes probabilistic valuations, and since~$\ftilde$ is assumed to be probabilistic, one sees first that~$u$ is non-negative; and second that~$u$ is~$F$-invariant  thanks to Lemma~\ref{lem:8}.

\emph{For proving the other points,} we first introduce, for~$s\geqslant0$, the square non-negative matrix~$F(s)$ indexed by state-and-cliques and defined by:
  \begin{gather*}
    F_{(\alpha,c),(\beta,d)}(s)=\un{\beta=\alpha\cdot c}\un{c\to d}s^{\len c}f_\alpha(c)
  \end{gather*}

  It follows from point~\ref{item:5} of Lemma~\ref{lem:11} that~$F(s)$ has the block shape~$F(s)=\left(
    \begin{smallmatrix}
      F^+(s)&X(s)\\0&F^0(s)
    \end{smallmatrix}
    \right)$
  where stable state-and-cliques are put first.
We prove the following claim:
\begin{compactenum}
  \item[(\dag)] Let~$F_1=F(\rho)$,~$F_1^+=F^+(\rho)$ and~$F_1^0=F^0(\rho)$.
The spectral radius of~$F^+_1$ is~$1$ and the spectral radius of~$F_1^0$ is~$<1$.
\end{compactenum}

Let~$\varphi_\alpha(x)=\rho^{\len x}\Delta(\alpha,\alpha\cdot x)f_\alpha(x)$ be the probabilistic valuation provided by Lemma~\ref{lem:3}, let~$\theta$ be the Möbius transform of~$\varphi$, and let~$v$ be the vector defined by~$v_{(\alpha,c)}=\theta\alpha(c)\Delta(\alpha_0,\alpha_0\cdot c)$.
The result of point~\ref{item:12} already proved applies to~$\varphi$, hence~$v$ is a non-negative invariant vector of~$F_1$.
  
Furthermore,~$\theta_\alpha(c)=0$ according to point~\ref{item:6} of Lemma~\ref{lem:11} if~$(\alpha,c)$ is not stable.
Hence,~$v=0$ on non-stable state-and-cliques, and therefore~$F_1^+v^+=v^+$ where~$v^+$ denotes the restriction of~$v$ to stable state-and-cliques.
But~$v^+>0$ (by point~\ref{item:4} of Lemma~\ref{lem:11}) and~$v^+$ is a non-empty vector (by point~\ref{item:qwdpoij8} of Lemma~\ref{lem:11}), which implies that~$F_1^+$ is an umbrella matrix of spectral radius~$1$.

To prove that the spectral radius of~$F_1^0$ is~$<1$ and complete the proof of~(\dag), it is enough to prove the convergence of the series of matrices~$S=\sum_{n\geqslant0}(F_1^0)^n$.
The matrix~$S$ is indexed by non-stable state-and-cliques, and for two non-stable state-and-cliques~$(\alpha,c)$ and~$(\beta,d)$, one has:
\begin{gather*}
  S_{(\alpha,c),(\beta,d)}=\sum_{x}\rho^{\len x}f_\alpha(x)
\end{gather*}
where~$x$ ranges over traces with first and last cliques~$(\alpha,c)$ and~$(\beta,d)$ respectively.
But such a trace cannot have an occurrence of all letters of~$\Sigma$ because~$(\alpha,c)$ is a non-stable state-and-clique.
Therefore:
 \begin{gather*}
   S_{(\alpha,c),(\beta,d)}\leqslant\sum_{a\in\Sigma}\Bigl(\sum_{x\in\M^a_\alpha}\rho^{\len x}f_\alpha(x)\Bigr)<\infty
 \end{gather*}
where the convergence of the series follows from Theorem~\ref{thr:1}.
The proof of (\dag) is complete.

\emph{We now come to the proof of Point~\ref{item:11}.}\quad For~$s<\rho$, one has:
\begin{gather*}
  \sum_{n\geqslant0}\bigr[\bigl(F(s)\bigr)^n\bigr]_{(\alpha,c),(\beta,d)}\leqslant G_\alpha(s)<\infty
\end{gather*}
Henceforth:
$s<\rho$ implies that~$I-F(s)$ is invertible.
But~$F(r)$ has a non-zero invariant vector according to point~\ref{item:12} already proved.
Hence,~$r\geqslant\rho$.

Seeking a contradiction, assume that~$r>\rho$.
We write~$x\in\DSCP$ to denote that the trace~$x$ has all its cliques in \DSCP.
We claim that for some constant~$M$:
\begin{gather}
  \label{eq:52}
  \forall n\geqslant0\quad
  \sum_{x\in\M_\alpha\cap\DSCP\tq\height x=n}\ftilde_\alpha(x)\leqslant M
\end{gather}
where~$\height x$ denotes the height of~$x$.
Indeed, with~$\nu=(\nu_\alpha)_{\alpha\in X}$ the Markov measure associated with the probabilistic valuation~$\ftilde$, the total probability law yields:
\begin{gather}
  \label{eq:55}
  \sum_{z\in\M_\alpha\tq\height z=n+1}\nu_\alpha(C_1\cdots C_{n+1}=z)=1
\end{gather}
Let~$z$ of height~$n+1$ with normal form~$z=c_1\cdots c_{n+1}$, and let~$x=c_1\cdots c_n$ and~$y=c_{n+1}$.
Then the form~\eqref{eq:62} for the transition matrix of the Markov chain of state-and-cliques yields~$\nu_\alpha(C_1\cdots C_{n+1}=z)=\ftilde_\alpha(x)\htilde_{\alpha\cdot x}(y)$.
Hence, from~\eqref{eq:55}:
\begin{gather}
  \label{eq:54}
  \sum_{x\in\M_\alpha\tq\height x=n}\ftilde_\alpha(x)\Bigl(\sum_{y\in\Cstar_{\alpha\cdot x}\tq x\to y}\htilde_{\alpha\cdot x}(y)\Bigr)=1
\end{gather}

If~$x\in\M_\alpha$ is bound to stay within \DSCP, then there exists, according to point~\ref{item:9qwdqwdplkcs} of Lemma~\ref{lem:11}, at least one stable state-and-cliques of the form~$(\alpha\cdot x,y)$, and all of these satisfy~$\htilde_{\alpha\cdot x}(y)>0$ according to point~\ref{item:4} of Lemma~\ref{lem:11};
hence~\eqref{eq:52} follows.

From~$r>\rho$ on the one hand, and from~\eqref{eq:52} on the other hand, we derive:
\begin{gather}
  \label{eq:56}
  \sum_{x\in\M_\alpha\cap\DSCP}\Bigl(\frac\rho r\Bigr)^{\len x}\ftilde_\alpha(x)<\infty\qquad\text{since~$\len x\geqslant\height x$}
\end{gather}
yielding:
\begin{gather}
  \label{eq:57}
  \sum_{x\in\M_\alpha\cap\DSCP}\rho^{\len x} f_\alpha(x)<\infty
\end{gather}
since~$\Gamma(\cdot,\cdot)$ is a positive cocycle.
But this contradicts that~$F_1^+$ has spectral radius~$1$, proved in~(\dag);
hence~$r=\rho$.

\emph{Point~\ref{item:13}.}\quad From~$r=\rho$ derives that~$F=F_1$.
The properties of~$F^+$ and of~$F^0$ stated in the lemma are thus the properties of~$F_1^+$ and of~$F_1^0$, already proved.
Let~$u=\left(
  \begin{smallmatrix}
    u^+\\u^0
  \end{smallmatrix}\right)$ be the block decomposition of~$u$ where stable state-and-cliques are put first.
We already saw that~$u^+>0$.
Furthermore,~$u^0$~is in invariant vector of~$F^0$, since~$u$ is an invariant vector of~$F$, but~$F^0$ has spectral radius~$<1$, hence~$u^0=0$.
Therefore~$u^+$ is an invariant vector of~$F^+$.
\end{proof}

\begin{proof}[Proof of Theorem~\ref{thr:2}.]
  The existence part is a direct consequence of Lemma~\ref{lem:3}, hence we focus on the uniqueness part of the statement.

Let~$(r,\Gamma)$ be a pair as in the statement, and let~$(\rho,\Delta)$ be the pair given by Lemma~\ref{lem:3}.
Let~$\ftilde$ and~$\varphi$ be two probabilistic valuations defined by:
  \begin{align*}
    \ftilde_\alpha(x)&=f_\alpha(x)r^{\len x}\Gamma(\alpha,\alpha\cdot x)
                       &\varphi_\alpha(x)&=f_\alpha(x)\rho^{\len x}\Delta(\alpha,\alpha\cdot x)
  \end{align*}
  and let~$\htilde$ and~$\theta$ denote respectively the Möbius transforms of~$\ftilde$ and of~$\varphi$.

  It follows from point~\ref{item:11} of Lemma~\ref{lem:22} that~$r=\rho$.
Let~$F$ be the square non-negative matrix introduced in~\eqref{eq:63}, and let~$F^+$ be the restriction of~$F$ to stable state-and-cliques.

Consider a basic component of~$F^+$, say~$N$, and some state~$\alpha_0$ such that~$(\alpha_0,d)\in N$ for some clique~$d$.
It follows from Lemma~\ref{lem:22}, point~\ref{item:9}, that the two vectors~$v$ and~$v'$ defined by: 
\begin{align*}
  v_{(\alpha,c)}&=\Gamma(\alpha_0,\alpha)\htilde_\alpha(c)  
  &v'_{(\alpha,c)}&=\Delta(\alpha_0,\alpha)\theta_\alpha(c)
\end{align*}
are positive invariant vectors of~$F^+$.

The component~$N$ is also final in~$F^+$ since~$F^+$ is an umbrella matrix; hence the restrictions of~$v$ and~$v'$ to~$N$ are themselves invariant.
It follows from Perron-Frobenius theory, applied to the irreducible matrix~$N$, that~$v$ and~$v'$ are proportional on~$N$, hence for some constant~$k_{\alpha_0}>0$:
\begin{gather}
\label{eq:28}
    \forall (\alpha,c)\in N\quad \Gamma(\alpha_0,\alpha)\htilde_{\alpha}(c)=k_{\alpha_0}\Delta(\alpha_0,\alpha)\theta_\alpha(c)
  \end{gather}

  In particular, for any clique~$c$ such that~$(\alpha_0,c)\in N$, one has:
  \begin{gather}
\label{eq:29}
    \htilde_{\alpha_0}(c)=k_{\alpha_0}\theta_{\alpha_0}(c)
  \end{gather}

Let~$P$ and~$P'$ be the transitions matrices of the Markov chains of state-and-cliques associated with the probabilistic valuations~$\ftilde$ and~$\varphi$, respectively.
Let a state-and-clique~$(\beta,d)\in N$ and~$c$ a clique such that~$(\alpha_0,c)\in N$.
We start from the expression~\eqref{eq:62} for the transition matrix~$P$, expend the definition of~$\ftilde_{\alpha_0}(c)$, and use~\eqref{eq:28} and~\eqref{eq:29} to find:
\begin{align*}
  P_{(\alpha_0,c),(\beta,d)}
                            &=\un{\beta=\alpha_0\cdot c}\un{c\to d} f_{\alpha_0}(c)r^{\len c}\Delta(\alpha_0,\beta)\frac{\theta_\beta(d)}{\theta_{\alpha_0}(c)}
  =P'_{(\alpha_0,c),(\beta,d)}
\end{align*}

Since~$(\alpha_0,c)$ was arbitrary in~$N$, we conclude that~$P$ and~$P'$ coincide on~$N$.
We claim that:
\begin{compactenum}
\item[(\dag)] This is enough to insure that~$\ftilde=\varphi$.
\end{compactenum}

The claim (\dag) implies that~$\Delta=\Gamma$, hence its proof will complete the proof of Theorem~\ref{thr:2}.
For the proof of~(\dag), let~$(\alpha_0,c_0)\in N$.
In particular,~$(\alpha_0,c_0)$ is a stable state-and-clique, let~$x_0\in\M_{\alpha_0}$ be a protection of~$(\alpha_0,c_0)$.
Now let~$(\alpha,x)$ be any trajectory;
in order to prove that~$\ftilde(\alpha,x)=\varphi(\alpha,x)$, we consider the probability measures~$(\mu_\alpha)_{\alpha\in X}$ and~$(\nu_\alpha)_{\alpha\in X}$ on~$\BM$ associated with~$\ftilde$ and with~$\varphi$, respectively.
Pick~$x_1\in\M_{\alpha_0\cdot x_0}$ such that~$\alpha_0\cdot x_0 x_1=\alpha$.
Then the chain rule~\eqref{eq:10} shows that:
\begin{align}
  \ftilde_\alpha(x)&=
\frac{\mu_{\alpha_0}\bigl(\up(x_0x_1x)\bigr)}{\mu_{\alpha_0}\bigl(\up(x_0x_1)\bigr)}
  \label{eq:30} &
\varphi_\alpha(x)&=
\frac{\nu_{\alpha_0}\bigl(\up(x_0x_1x)\bigr)}{\nu_{\alpha_0}\bigl(\up(x_0x_1)\bigr)}
\end{align}

According to~\eqref{eq:64}, the visual cylinder~$\up(x_0x_1)$ can be described as a disjoint union of standard cylinders.
Hence, denoting by~$C_z$  the standard cylinder associated to~$z$ as in~\eqref{eq:32}, one has:

\begin{gather*}
  \mu_{\alpha_0}\bigl(\up(x_0x_1)\bigr)=\sum_{\substack{z\in\M\tq x_0x_1\leqslant z\\\height z=\height{x_0x_1}}}\mu_{\alpha_0}(C_z)
\end{gather*}

On the one hand,~$x_0$~is a protection of~$(\alpha_0,c_0)$ so all~$z$ such that~$x_0x_1\leqslant z$ have~$(\alpha_0,c_0)$ as their first state-and-clique.
On the other hand,~$N$~is a final component of~$F^+$, so all state-and-cliques of~$z$ either belong to~$N$ or are out of the set of stable state-and-cliques;
in the latter case, the standard cylinder~$C_z$ is given~$\mu_{\alpha_0}$-probability~$0$ by virtue of Lemma~\ref{lem:11} and by the form~\eqref{eq:34} of the transition matrix.
Since the same holds for the probability~$\nu_{\alpha_0}$, and since the transition matrices coincide on~$N$, we conclude that~$\mu_{\alpha_0}\bigl(\up(x_0x_1)\bigr)=\nu_{\alpha_0}\bigl(\up(x_0x_1)\bigr)$.
The same applies to~$\up(x_0x_1x)$, hence from~\eqref{eq:30} we conclude that~$\ftilde_\alpha(x)=\varphi_\alpha(x)$, which was to be proved.
\end{proof}

\subsection{Universal construction of probabilistic valuations and other consequences}
\label{sec:consequences}

We now come to a series of corollaries concerning the structure and the properties of probabilistic valuations, which all derive from the spectral property of Theorem~\ref{thr:1} and from the previous result (Theorem~\ref{thr:2}).
Next section (\S~\ref{sec:kernel-mobius-matrix}) will be devoted to another interesting consequence of Theorem~\ref{thr:2}.

\begin{corollary}
\label{cor:1}
Assume that the system is irreducible and that~$f$ is a probabilistic valuation.
Then~$\rho=1$ and:
\begin{gather}
\label{eq:35}
\forall(\alpha,\beta)\in X\times X\quad \lim_{s\to1^-}\frac{G_\beta(s)}{G_\alpha(s)}=1
\end{gather}
\end{corollary}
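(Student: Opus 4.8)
The plan is to apply Theorem~\ref{thr:2} to the already-probabilistic valuation~$f$ itself. Indeed, $f$ is trivially of the form $f_\alpha(x) = r^{\len x}\Gamma(\alpha,\alpha\cdot x)f_\alpha(x)$ with $r=1$ and $\Gamma$ the constant cocycle $\Gamma(\alpha,\beta)=1$, and this $\ftilde = f$ is probabilistic by assumption. On the other hand, Theorem~\ref{thr:2} asserts that there is a \emph{unique} such pair $(r,\Gamma)$, and that it equals $(\rho,\Delta)$, where $\rho$ is the common radius of convergence of the $G_\alpha(s)$ and $\Delta(\alpha,\beta) = \lim_{s\to\rho^-}G_\beta(s)/G_\alpha(s)$ is the positive cocycle from Lemma~\ref{lem:3}. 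By uniqueness, $\rho = r = 1$ and $\Delta = \Gamma$, i.e.\ $\Delta(\alpha,\beta)=1$ for all $(\alpha,\beta)$.

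First I would record that since the system is irreducible, it is in particular transitive and non-trivial, so by the Remark following the definition of the characteristic root the radii $\rho_{\alpha,\beta}$ all coincide with a common finite value $\rho$, and the series $G_\alpha(s)$ all share this radius of convergence; this is exactly the hypothesis under which Theorem~\ref{thr:2} is stated. Then I would exhibit $(1,\Gamma_{\mathrm{const}})$ as a valid pair in the sense of Theorem~\ref{thr:2}: one must check that $\Gamma_{\mathrm{const}}\colon(\alpha,\beta)\mapsto 1$ is a positive cocycle (immediate from $1 = 1\cdot 1$) and that the resulting valuation $\ftilde_\alpha(x) = 1^{\len x}\cdot 1 \cdot f_\alpha(x) = f_\alpha(x)$ is probabilistic — which holds by hypothesis. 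Invoking the uniqueness clause of Theorem~\ref{thr:2} then forces $(1,\Gamma_{\mathrm{const}}) = (\rho,\Delta)$, giving $\rho=1$ and $\Delta \equiv 1$. Finally, spelling out $\Delta(\alpha,\beta) = \lim_{s\to\rho^-}G_\beta(s)/G_\alpha(s) = \lim_{s\to1^-}G_\beta(s)/G_\alpha(s) = 1$ yields~\eqref{eq:35}.

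There is essentially no obstacle here: the corollary is a direct unpacking of Theorem~\ref{thr:2} specialized to a probabilistic $f$, and the only thing to be careful about is matching hypotheses (irreducible $\Rightarrow$ transitive and non-trivial, so that ``$\rho$ common'' makes sense and Theorem~\ref{thr:2} applies) and correctly identifying the trivial pair $(1,\mathbf{1})$ with Lemma~\ref{lem:3}'s pair $(\rho,\Delta)$. If one wanted to avoid citing the full uniqueness statement, an alternative is to argue directly from Lemma~\ref{lem:3}: the weak limit $\mu_\alpha$ of the Boltzmann distributions satisfies $\mu_\alpha(\up x) = f_\alpha(x)\rho^{\len x}\Delta(\alpha,\alpha\cdot x)$, while the Markov measure $m$ attached to $f$ satisfies $m_\alpha(\up x) = f_\alpha(x)$; using that both $m$ and $\mu$ are determined by their values on the $\pi$-system of visual cylinders and comparing the two formulas on $x=\ve$ and then on letters would again pin down $\rho = 1$ and $\Delta \equiv 1$. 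The first route via Theorem~\ref{thr:2} is cleaner and is the one I would write up.
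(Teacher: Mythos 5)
Your proposal is correct and matches the paper's own argument: exhibit $(1,\mathbf{1})$ as a valid normalizing pair (since $f$ is already probabilistic), invoke the uniqueness clause of Theorem~\ref{thr:2} to identify it with $(\rho,\Delta)$, and read off $\rho=1$ and $\Delta\equiv 1$. (Incidentally, the paper's proof text cites Theorem~\ref{thr:1} at this point, which is a typo for the uniqueness result of Theorem~\ref{thr:2}; you cite the right theorem.)
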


\begin{proof}
According to Lemma~\ref{lem:3}, the pair~$(\rho,\Delta)$ makes the positive valuation~$\ftilde$ probabilistic, where~$\ftilde_\alpha(x)=f_\alpha(x)\rho^{\len x}\Delta(\alpha,\alpha\cdot x)$.
But the pair~$(1,1)$ also works since~$f$ is assumed to be probabilistic itself.
Hence, by Theorem~\ref{thr:1},~$\rho=1$ and~$\Delta=1$, which is~\eqref{eq:35}.
\end{proof}

The next result shows that the construction of probabilistic valuations that we have studied so far, and which was introduced in Lemma~\ref{lem:3}, actually covers the range of all possible probabilistic valuations, and is thus universal.

\begin{corollary}
\label{cor:2}
Assume that the concurrent system is irreducible, equipped with a Markov measure~$\nu=(\nu_\alpha)_{\alpha\in X}$.
Then for each~$\alpha\in X$, the probability measure~$\nu_\alpha$ is the weak limit, as~$s\to1^-$, of the probability measures~$(\nu_{\alpha,s})_{0\leqslant s<1}$ on~$\Mbar$ defined as in~\eqref{eq:23}.
\end{corollary}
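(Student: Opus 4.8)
The plan is to combine Corollary~\ref{cor:1} with the weak-convergence machinery of Lemma~\ref{lem:3}, after re-expressing~$\nu$ through the construction of that lemma. First I would apply Lemma~\ref{lem:3} to the probabilistic valuation~$f=(f_\alpha)_{\alpha\in X}$ attached to~$\nu$, taking as the starting valuation~$f$ itself. This yields a family of Boltzmann-like distributions~$(\nu_{\alpha,s})_{0\leqslant s<\rho}$ defined exactly as in~\eqref{eq:23}, and these converge weakly on~$\Mbar$, as~$s\to\rho^-$, to a probability measure~$\mu_\alpha$ on~$\BM_\alpha$ characterized by~$\mu_\alpha(\up x)=f_\alpha(x)\rho^{\len x}\Delta(\alpha,\alpha\cdot x)$, with~$\rho$ the common radius of convergence of the~$G_\alpha$ and~$\Delta$ the positive cocycle of Lemma~\ref{lem:3}.

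The second step is to identify~$\rho$ and~$\Delta$. Since~$f$ is a probabilistic valuation and the system is irreducible, Corollary~\ref{cor:1} applies and gives~$\rho=1$ together with~$\Delta(\alpha,\beta)=\lim_{s\to1^-}G_\beta(s)/G_\alpha(s)=1$ for every pair~$(\alpha,\beta)$. Substituting these values into the characterization of the limit measure yields~$\mu_\alpha(\up x)=f_\alpha(x)$ for every~$x\in\M_\alpha$, for each~$\alpha\in X$.

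The third step is to conclude that~$\mu_\alpha=\nu_\alpha$. By definition of the valuation associated with the Markov measure~$\nu$, we have~$\nu_\alpha(\up x)=f_\alpha(x)$ for all~$x\in\M_\alpha$, and also~$\nu_\alpha(\up x)=0=f_\alpha(x)$ when~$x\notin\M_\alpha$ (using~\eqref{eq:50}). Hence~$\mu_\alpha$ and~$\nu_\alpha$ agree on the~$\pi$-system~$\{\up x\tq x\in\M\}\cup\{\emptyset\}$, which generates the Borel \slgb~$\F$ of~$\BM$ (as recalled in~\S~\ref{sec:trace-monoids-1}); by the~$\pi$-$\lambda$ theorem they coincide as measures on~$\BM$. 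Since~$\rho=1$, the measures~$\nu_{\alpha,s}$ are precisely those described in the statement, and their weak limit as~$s\to1^-$ is therefore~$\nu_\alpha$, as claimed.

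I do not expect a serious obstacle here: the statement is essentially a bookkeeping corollary that feeds Corollary~\ref{cor:1} back into Lemma~\ref{lem:3}. The only point requiring a little care is that Lemma~\ref{lem:3} produces weak convergence on the \emph{compact} space~$\Mbar$, so one should note, as in the proof of Lemma~\ref{lem:3} itself, that the limit measure charges~$\M$ with mass~$0$ (because the system is non-trivial), so that the convergence may be read on~$\BM$ and the limit is genuinely a measure on~$\BM_\alpha$; this was already established inside Lemma~\ref{lem:3} and needs only to be invoked.
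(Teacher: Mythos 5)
Your proposal is correct and follows essentially the same route as the paper's proof: apply Lemma~\ref{lem:3} to the probabilistic valuation~$f$ attached to~$\nu$, invoke Corollary~\ref{cor:1} to conclude~$\rho=1$ and~$\Delta\equiv1$, and identify the weak limit with~$\nu_\alpha$ via the generating~$\pi$-system of visual cylinders. The paper's proof is terser but uses exactly the same two ingredients (Lemma~\ref{lem:3} and Corollary~\ref{cor:1}) and the same identification argument; your final remark about the limit charging~$\M$ with mass~$0$ is a valid point of care already built into Lemma~\ref{lem:3}.
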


\begin{proof}
Indeed,~$\rho=1$ by Corollary~\ref{cor:1}, and the weak limit of the statement, which is a Markov measure on~$\BM$ according to Lemma~\ref{lem:3}, coincides with~$\nu_\alpha$ still by the same corollary.
\end{proof}

Recall that, for any letter~$a$,~$\M^a$~denotes the submonoid of~$\M$ generated by~$\Sigma\setminus\{a\}$;
and let:
\begin{gather*}
x\BM_{\alpha\cdot x}^a=\{x\omega\tq \omega\in\BM_{\alpha\cdot x}^a\}
\end{gather*}

The next result shows that, after any finite trajectory, all letters will be almost surely used; this is easily shown to be false if the system is not irreducible, for instance if the underlying trace monoid itself is not irreducible.

\begin{corollary}
\label{cor:7}
Assume that the system is irreducible.
Then for any~$\alpha\in X$, for any letter~$a\in\Sigma$ and for any trace~$x\in\M_\alpha$:\quad~$\nu_\alpha(x\BM_{\alpha\cdot x}^a)=0$.
\end{corollary}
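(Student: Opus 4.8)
The statement says that, starting from any finite trajectory $(\alpha, x)$, almost surely the infinite continuation uses all letters of $\Sigma$; equivalently, the event $x\BM^a_{\alpha\cdot x}$ of continuations missing letter $a$ has $\nu_\alpha$-measure $0$. First I would reduce to the case $x = \ve$ using the chain rule: by \eqref{eq:10} one has $\nu_\alpha(x\BM^a_{\alpha\cdot x}) = \nu_\alpha(\up x)\cdot \nu_{\alpha\cdot x}(\BM^a_{\alpha\cdot x})$, so it suffices to prove $\nu_\beta(\BM^a_\beta) = 0$ for every state $\beta$ and every letter $a$. Here $\BM^a_\beta$ denotes the set of infinite traces $\omega$ such that every finite prefix of $\omega$ lies in $\M^a_\beta$, i.e.\ $\omega$ never plays $a$.

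**Main argument.** The set $\BM^a_\beta$ is a closed (hence measurable) subset of $\BM$, and it is covered by the decreasing intersection $\bigcap_{n\geqslant 0}\{\omega \tq \omega_n \in \M^a_\beta\}$ where $\omega_n = C_1(\omega)\cdots C_n(\omega)$. So $\nu_\beta(\BM^a_\beta) = \lim_{n\to\infty}\nu_\beta(\{\omega \tq \omega_n\in\M^a_\beta\}) = \lim_{n\to\infty}\sum_{y\in\M^a_\beta\tq \height y = n}\nu_\beta(C_y)$, using that the normal cylinders $C_y$ of a fixed height partition $\BM$. Now I would invoke Corollary~\ref{cor:1}: since the system is irreducible and $\nu$ is a probabilistic valuation (associated to the Markov measure $\nu$), its characteristic root is $\rho = 1$. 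Applying Theorem~\ref{thr:1} to the irreducible system $(\M, X)$ with the valuation $f_\beta = \nu_\beta$ gives $\rho^a > \rho = 1$ for every letter $a$, where $\rho^a$ is the radius of convergence of the generating series $G^a_{\beta,\gamma}$ of the restricted system $(\M^a, X)$. Consequently the series $\sum_{y\in\M^a_\beta}\nu_\beta(y)\,s^{\len y} = \sum_\gamma G^a_{\beta,\gamma}(s)$ converges at $s = 1$, so in particular $\sum_{y\in\M^a_\beta}\nu_\beta(y) < \infty$; its tail therefore goes to $0$, and since $\sum_{y\in\M^a_\beta\tq\height y = n}\nu_\beta(C_y) \leqslant \sum_{y\in\M^a_\beta\tq\len y\geqslant n}\nu_\beta(y) \to 0$ (using $\len y\geqslant\height y$ and $\nu_\beta(C_y)\leqslant\nu_\beta(\up y) = \nu_\beta(y)$), we conclude $\nu_\beta(\BM^a_\beta) = 0$.

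**Expected obstacle.** The only delicate point is the bookkeeping that connects the cylinder probabilities $\nu_\beta(C_y)$ to the value $\nu_\beta(y) = \nu_\beta(\up y)$ of the probabilistic valuation: one has $\nu_\beta(C_y)\leqslant\nu_\beta(\up y)$ from the inclusion $C_y\subseteq\up y$ (both hold by \eqref{eq:64}), which is all that is needed, so this is mild. The substantive input is really Theorem~\ref{thr:1} combined with the fact that $\rho = 1$ for a probabilistic valuation; once those are in hand the proof is a short summability argument. I would therefore write the proof in roughly four sentences: reduce to $x = \ve$, cover $\BM^a_\beta$ by cylinders indexed by $\M^a_\beta$, apply $\rho = 1$ and $\rho^a > 1$ to get summability over $\M^a_\beta$, and let $n\to\infty$.

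$$
\begin{proof}
By the chain rule~\eqref{eq:10}, $\nu_\alpha(x\BM^a_{\alpha\cdot x}) = \nu_\alpha(\up x)\,\nu_{\alpha\cdot x}(\BM^a_{\alpha\cdot x})$, so it suffices to prove that $\nu_\beta(\BM^a_\beta) = 0$ for every state~$\beta$ and every letter~$a$, where~$\BM^a_\beta$ consists of the infinite traces all of whose finite prefixes lie in~$\M^a_\beta$. For~$\omega\in\BM$ and~$n\geqslant1$, put~$\omega_n = C_1(\omega)\cdots C_n(\omega)$. Then~$\BM^a_\beta\subseteq\{\omega\tq\omega_n\in\M^a_\beta\}$ for every~$n$, and since the normal cylinders~$(C_y)_{y\in\M\tq\height y = n}$ partition~$\BM$ (see~\eqref{eq:64}), we get
\begin{gather*}
\nu_\beta(\BM^a_\beta)\leqslant\inf_{n\geqslant1}\ \sum_{\substack{y\in\M^a_\beta\\\height y = n}}\nu_\beta(C_y).
\end{gather*}
From~$C_y\subseteq\,\up y$ (again by~\eqref{eq:64}) and~$\len y\geqslant\height y$, it follows that for each~$n$,
\begin{gather*}
\sum_{\substack{y\in\M^a_\beta\\\height y = n}}\nu_\beta(C_y)\leqslant\sum_{\substack{y\in\M^a_\beta\\\len y\geqslant n}}\nu_\beta(\up y)=\sum_{\substack{y\in\M^a_\beta\\\len y\geqslant n}}\nu_\beta(y).
\end{gather*}
Now, since the system is irreducible and~$\nu$ is a probabilistic valuation, Corollary~\ref{cor:1} gives~$\rho = 1$; and Theorem~\ref{thr:1} applied to the irreducible system~$(\M,X)$ equipped with~$\nu$ yields~$\rho^a > 1$. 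Hence the series~$\sum_{y\in\M^a_\beta}\nu_\beta(y)\,s^{\len y} = \sum_{\gamma\in X}G^a_{\beta,\gamma}(s)$ converges at~$s = 1$, so that~$\sum_{y\in\M^a_\beta}\nu_\beta(y) < \infty$, and its tail tends to~$0$ as~$n\to\infty$. Combining the three displays, $\nu_\beta(\BM^a_\beta) = 0$, which proves the corollary.
\end{proof}
$$
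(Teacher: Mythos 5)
Your proof is correct and relies on exactly the same substantive inputs as the paper's: reduce to $x=\ve$ via the chain rule, then use Corollary~\ref{cor:1} ($\rho=1$) together with Theorem~\ref{thr:1} ($\rho^a>1$) to obtain $\sum_{y\in\M^a_\beta}\nu_\beta(y)<\infty$. The only (minor) difference is at the very end: the paper bounds $\sum_n\nu_\beta(Z_n)$ and invokes Borel--Cantelli, whereas you observe directly that $\nu_\beta(\BM^a_\beta)\leqslant\nu_\beta(Z_n)$ is dominated by a tail of the convergent series and hence tends to $0$ --- a marginally more economical finish, but the same argument in spirit.
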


\begin{proof}
It is enough to prove the result for~$x=\ve$, hence that~$\nu_\alpha(\BM_\alpha^a)=0$.

Let~$f=(f_\alpha)_{\alpha\in X}$ be the probabilistic valuation associated to the Markov measure~$\nu$.
Corollary~\ref{cor:1} states that the spectral radius of the generating series~$G_\alpha(t) = \sum_{y\in\M_\alpha}t^{\len y}f_\alpha(y)$ is~$1$, and Theorem~\ref{thr:1} proves that this radius is smaller than the radius of the generating series~$G_\alpha^a(t)=\sum_{y\in\M_\alpha^a}t^{\len y}f_\alpha(y)$; it follows that
\begin{gather}
\label{eq:49}
\sum_{y\in\M_\alpha^a}f_\alpha(y)<\infty.
\end{gather}

Finally, let~$(C_i)_{i\geqslant1}$ be the cliques of a random infinite trace~$\omega$, and consider the events~$Z_n=\{C_1\cdots C_n \in \M^a\}$ for~$n\geqslant0$.
Then, denoting by~$\height y$ the height of a trace~$y$, one has:
\begin{align*}
\sum_{n\geqslant1}\nu_\alpha(Z_n)&=\sum_{n\geqslant1}\;\sum_{y\in\M_\alpha^a}\un{\height y=n}\nu_\alpha\bigl(C_1\cdots C_n = C_1(y) \cdots C_n(y)\bigr)\\
&\leqslant\sum_{n\geqslant1}\;\sum_{y\in\M_\alpha^a}\un{\height y=n}\nu_\alpha(\up y)\\
&\leqslant\sum_{y\in\M_\alpha^a}f_\alpha(y)<\infty\qquad\text{using~\eqref{eq:49}}
\end{align*}
By Borel-Cantelli lemma, it follows that~$\nu_\alpha(Z_n\ \text{infinitely often})=0$, which is another formulation of~$\nu_\alpha(\BM_\alpha^a)=0$.
\end{proof}

The next result gives a probabilistic characterization of stable state-and-cliques. An interesting point is that this characterization does not depend on the particular probabilistic valuation.

\begin{corollary}
\label{cor:3}
Assume that the system is irreducible and that~$f$ is a probabilistic valuation.
Let~$h$ be the Möbius transform of~$f$.
Then any state-and-clique~$(\alpha,c)$ is stable if and only if~$h_\alpha(c)>0$.
In particular, the property~$h_\alpha(c)>0$ for a given state-and-clique is independent of the probabilistic valuation~$f$.
\end{corollary}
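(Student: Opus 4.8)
The plan is to deduce this from Lemma~\ref{lem:11} together with Theorem~\ref{thr:2}, exploiting the uniqueness of the probabilistic "shape" provided by the latter. First I would recall that Lemma~\ref{lem:11}, point~\ref{item:4}, already gives one implication with no further hypothesis on which probabilistic valuation is used: if $(\alpha,c)$ is stable, then $h_\alpha(c)>0$. So the only new work is the converse: if $h_\alpha(c)>0$, then $(\alpha,c)$ is stable. Point~\ref{item:6} of Lemma~\ref{lem:11} gives exactly this converse, but only for the \emph{specific} probabilistic valuation $\ftilde_\alpha(x)=f_\alpha(x)\rho^{\len x}\Delta(\alpha,\alpha\cdot x)$ produced in Lemma~\ref{lem:3} starting from an arbitrary base valuation $f$. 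The task is therefore to transfer this to an arbitrary probabilistic valuation.

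The key observation is that, by Corollary~\ref{cor:1}, if $f$ is \emph{itself} a probabilistic valuation on an irreducible system then $\rho=1$ and $\Delta=1$, so that the valuation $\ftilde$ of Lemma~\ref{lem:3} applied to this $f$ is just $f$ again. Concretely: start with the given probabilistic valuation $f$, feed it into the construction of Lemma~\ref{lem:3}; the output is the probabilistic valuation $\ftilde_\alpha(x)=f_\alpha(x)\rho^{\len x}\Delta(\alpha,\alpha\cdot x)$, which by Corollary~\ref{cor:1} equals $f_\alpha(x)$ because $\rho=1$ and $\Delta\equiv 1$. Hence the hypothesis of point~\ref{item:6} of Lemma~\ref{lem:11} is met with $\ftilde=f$ and $\htilde=h$, and point~\ref{item:6} yields: $h_\alpha(c)>0$ implies $(\alpha,c)$ is stable. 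Combining with point~\ref{item:4} gives the equivalence. The final sentence of the corollary is then immediate, since stability of a state-and-clique is defined purely in terms of the monoid action and the divisibility order (Definition~\ref{def:qwoihj}), with no reference to any valuation; so the equivalent condition $h_\alpha(c)>0$ cannot depend on which probabilistic valuation $f$ was chosen.

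I do not expect a serious obstacle here: the corollary is essentially a bookkeeping consequence of results already in hand. The only mild subtlety is making sure the normalization is invoked correctly—namely that Corollary~\ref{cor:1} applies precisely because we are assuming $f$ is \emph{already} probabilistic and the system is irreducible, which is what collapses $(\rho,\Delta)$ to $(1,1)$ and lets point~\ref{item:6} of Lemma~\ref{lem:11} be applied with $\ftilde=f$. One should also note explicitly that point~\ref{item:4} of Lemma~\ref{lem:11} was stated for an irreducible system equipped with a probabilistic valuation, so both halves of the equivalence are licensed under exactly the hypotheses of the corollary. No new estimates, no new constructions.
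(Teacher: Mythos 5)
Your proposal is correct and follows exactly the paper's own one-line argument: apply Lemma~\ref{lem:11} (points~\ref{item:4} and~\ref{item:6}) after observing via Corollary~\ref{cor:1} that $\rho=1$ and $\Delta\equiv1$, so that $\ftilde=f$. You have simply unpacked the same reasoning in more detail.
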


\begin{proof}
This follows directly from Lemma~\ref{lem:11}, which applies to~$f$ since in this case~$\ftilde=f$ by Corollary~\ref{cor:1}.
\end{proof}

The next result brings a probabilistic argument for the claim that the unstable state-and-cliques are an ``encoding artefact''. Of course, some trajectories need them to be correctly encoded; but all together, these trajectories have probability zero to occur---independentely of the chosen Markov measure.

\begin{corollary}
\label{cor:8}
Assume that the system is irreducible.
Then, relatively to any Markov measure, the Markov chain of state-and-cliques stays within the \DSCP.
\end{corollary}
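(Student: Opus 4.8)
The plan is to show that, for each initial state~$\alpha\in X$, the Markov chain of state-and-cliques on~$(\BM_\alpha,m_\alpha)$ (see Definition~\ref{def:7}) almost surely visits only \emph{stable} state-and-cliques, which is precisely the assertion that it stays within \DSCP. The only external inputs I would use are Corollary~\ref{cor:1} --- which gives~$\rho=1$ and~$\Delta=1$, so that the probabilistic valuation~$f$ associated with the Markov measure coincides with its own normalization~$\ftilde$ --- and Corollary~\ref{cor:3}, which tells us that a state-and-clique~$(\gamma,c)$ is stable if and only if~$h_\gamma(c)>0$, where~$h$ is the Möbius transform of~$f$; crucially this criterion is purely combinatorial and independent of~$m$.

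First I would treat the initial step: the initial distribution of the chain is~$\delta_{\{\alpha\}}\otimes h_\alpha$, and for every non-stable~$(\alpha,c)$ one has~$h_\alpha(c)=0$ by Corollary~\ref{cor:3} (recall~$h_\alpha\geqslant0$); summing over the finitely many non-stable cliques at~$\alpha$ shows that the first state-and-clique~$(\alpha_0,C_1)$ is~$m_\alpha$-almost surely stable. Next I would treat the transitions: from the form~\eqref{eq:34} of the transition matrix~$P$, the entry~$P_{(\gamma,c),(\delta,d)}$ can be positive only if~$h_\delta(d)>0$, that is, only if~$(\delta,d)$ is stable, again by Corollary~\ref{cor:3}; equivalently, every row of~$P$ carries its whole mass over stable state-and-cliques. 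Combining the two, a straightforward induction on~$i$ gives~$m_\alpha\bigl((\alpha_i,C_{i+1})\text{ non-stable}\bigr)=0$ for every~$i\geqslant0$, and since a countable union of~$m_\alpha$-null events is~$m_\alpha$-null, the whole trajectory~$\bigl((\alpha_i,C_{i+1})\bigr)_{i\geqslant0}$ lies in \DSCP\ for~$m_\alpha$-almost every~$\omega\in\BM_\alpha$.

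I do not expect a serious obstacle: once Corollary~\ref{cor:3} and the explicit transition matrix~\eqref{eq:34} are available, the statement is essentially immediate, the real work having been done in Lemma~\ref{lem:11}, Lemma~\ref{lem:22} and Theorem~\ref{thr:2}. The one point deserving care is the internal consistency of the argument: the~$(\gamma,c)$-row of~$P$ is defined precisely when~$g_{\gamma\cdot c}(c)>0$, i.e.\ when~$(\gamma,c)$ is stable, so the induction only ever appeals to rows of~$P$ that are genuinely defined, and point~\ref{item:9qwdqwdplkcs} of Lemma~\ref{lem:11} guarantees that from any stable state-and-clique there is at least one admissible stable successor, so that \DSCP\ supports the chain with no dead ends. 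An alternative, slightly heavier route would be to invoke Lemma~\ref{lem:22}: the restriction~$F^0$ of the matrix~$F$ of~\eqref{eq:63} to non-stable state-and-cliques has spectral radius~$<1$, so the expected number of visits to any non-stable state-and-clique is finite, which forces each such visit to have probability zero; but the direct induction above is cleaner and avoids re-deriving summability estimates.
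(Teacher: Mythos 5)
Your proof is correct and takes essentially the same approach as the paper, which simply states that the corollary ``derives at once from Corollary~\ref{cor:3} and from the structure of the transition matrix of the chain, given by~\eqref{eq:34}''; you are spelling out exactly that deduction (initial distribution supported on stable state-and-cliques, transitions that can only lead to stable state-and-cliques, and a countable union of null events). The detour through Corollary~\ref{cor:1} is harmless but unnecessary, since Corollary~\ref{cor:3} already applies directly to the probabilistic valuation of the given Markov measure.
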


\begin{proof}
Derives at once from Corollary~\ref{cor:3} and from the structure of the transition matrix of the chain, given by~\eqref{eq:34}.
\end{proof}

The previous corollary has shown that unstable state-and-cliques ``should not'' occur. On the opposite, some state-and-cliques among the stable ones have the property that they ``must'' occur if they are enabled. The next result shows that the later property does not depend on the chosen Markov measure.

\begin{corollary}
\label{cor:5}
Assume that the system is irreducible, and let\/~$(\alpha,b)\in X\times\Sigma$.
Then the following properties are equivalent:
\begin{compactenum}[\normalfont(i)]
\item\label{item:oihqw}~$f_\alpha(b)=1$ for some probabilistic valuation~$f$;
\item\label{item:qasp}~$f_\alpha(b)=1$ for any probabilistic valuation~$f$.
\end{compactenum}
\end{corollary}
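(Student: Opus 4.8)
The statement is an equivalence between ``$f_\alpha(b)=1$ for some probabilistic valuation'' and ``$f_\alpha(b)=1$ for every probabilistic valuation''. Only the implication \ref{item:oihqw}$\implies$\ref{item:qasp} needs proof, the converse being trivial since irreducible concurrent systems admit at least one probabilistic valuation (Theorem~\ref{thr:2}). So fix a probabilistic valuation $f$ with $f_\alpha(b)=1$ and let $f'$ be an arbitrary probabilistic valuation; I want to conclude $f'_\alpha(b)=1$.

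The first step is to translate $f_\alpha(b)=1$ into a combinatorial statement about the \DSC, independent of the valuation. Let $h$ be the Möbius transform of $f$. Since $f$ is probabilistic we have $h_\alpha(\ve)=0$, i.e. $1=\sum_{c\in\Cstar_\alpha}f_\alpha(c)(-1)^{\len c+1}\cdot(\dots)$; more usefully, by the chain-rule interpretation, $f_\alpha(b)=\mu_\alpha(\up b)=\mu_\alpha(C_1\ni b)$, the probability that the first clique played from $\alpha$ contains the letter $b$. So $f_\alpha(b)=1$ says that, $\mu_\alpha$-almost surely, $b\in C_1(\omega)$. The key claim is that this almost-sure event is in fact a \emph{structural} property of the \DSC: every stable state-and-clique $(\alpha,c)$ satisfies $b\in c$. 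Indeed, by Corollary~\ref{cor:8} the Markov chain of state-and-cliques stays within \DSCP, and by Corollary~\ref{cor:3} a state-and-clique $(\alpha,c)$ is reached with positive probability under $\mu_\alpha$ iff it is stable (equivalently $h_\alpha(c)>0$); hence $\mu_\alpha(C_1\ni b)=1$ forces $b\in c$ for \emph{every} stable $(\alpha,c)$. This is the heart of the argument and I expect the main obstacle to be exactly this translation --- making precise that ``almost surely $b\in C_1$'' is equivalent to ``$b\in c$ for all stable $(\alpha,c)$'', which rests on Corollary~\ref{cor:3} telling us the stable state-and-cliques are precisely the ones with positive $h$-mass, and this characterization being \emph{valuation-independent}.

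The second step is to run the argument backwards for $f'$. Let $h'$ be the Möbius transform of $f'$. By Corollary~\ref{cor:3} the stable state-and-cliques of the form $(\alpha,c)$ are the same ones (independent of the valuation), and we have just shown they all have $b\in c$. Therefore, under the Markov measure $\mu'_\alpha$ associated with $f'$, we again have $C_1(\omega)\ni b$ almost surely: the chain stays in \DSCP\ by Corollary~\ref{cor:8}, every state-and-clique it can start from is of the form $(\alpha,c)$ with $b\in c$, so $\mu'_\alpha(\up b)=\mu'_\alpha(C_1\ni b)=1$, i.e. $f'_\alpha(b)=1$. This closes the equivalence.

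Concretely I would write: ``\ref{item:qasp}$\implies$\ref{item:oihqw} is trivial. For the converse, suppose $f$ is a probabilistic valuation with $f_\alpha(b)=1$, let $h$ be its Möbius transform, and let $\mu_\alpha$ be the associated Markov measure. Then $\mu_\alpha(\up b)=f_\alpha(b)=1$, so $\mu_\alpha$-almost surely $b\in C_1(\omega)$. By Corollary~\ref{cor:8} the Markov chain of state-and-cliques stays within \DSCP, and by Corollary~\ref{cor:3} every stable state-and-clique is reached with positive probability starting from a suitable state; applying this to the initial step, $\mu_\alpha(C_1=c)=h_\alpha(c)>0$ for every stable $(\alpha,c)$. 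Hence every stable state-and-clique $(\alpha,c)$ satisfies $b\in c$. Now let $f'$ be any probabilistic valuation, with Möbius transform $h'$ and Markov measure $\mu'_\alpha$. By Corollary~\ref{cor:3} the set of stable state-and-cliques does not depend on the valuation, so still $b\in c$ for every stable $(\alpha,c)$. By Corollary~\ref{cor:8}, $\mu'_\alpha$-almost surely the first clique $C_1(\omega)$ is such that $(\alpha,C_1(\omega))$ is stable, hence contains $b$; therefore $f'_\alpha(b)=\mu'_\alpha(\up b)=\mu'_\alpha(C_1\ni b)=1$.''
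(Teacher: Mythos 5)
Your proof is correct and follows essentially the same route as the paper: the paper also reduces the corollary to the claim that, for a probabilistic valuation $f$, one has $f_\alpha(b)=1$ if and only if every stable state-and-clique $(\alpha,\gamma)$ satisfies $b\in\gamma$, and then invokes Corollary~\ref{cor:3} (valuation-independence of stability). You prove the same biconditional, just unrolled directly into the two implications of the statement rather than isolated as a separate claim.
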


\begin{proof}
In view of Corollary~\ref{cor:3}, the equivalence (\ref{item:oihqw})$\iff$(\ref{item:qasp}) follows at once from the following claim:
\begin{compactitem}
\item[(\dag)] If~$f$ is a probabilistic valuation, then~$f_\alpha(b)=1$ if and only if all state-and-cliques of the form~$(\alpha,\gamma)$ with~$b\notin\gamma$ are not stable.
\end{compactitem}

$(\Rightarrow)$\quad Let~$f$ be a probabilistic valuation such that~$f_\alpha(b)=1$ and let~$(\alpha,\gamma)$ be a state-and-clique such that~$b\notin\gamma$.
Seeking a contradiction, assume that~$(\alpha,\gamma)$ is stable.
Then, with positive~$\nu_\alpha$-probability, the first clique of~$\omega\in\partial\M_\alpha$ is~$\gamma$, and thus~$b$ does not divide~$\omega$.
But this contradicts~$\nu_\alpha(\up b)=f_\alpha(b)=1$.

$(\Leftarrow)$\quad Conversely, assume that all state-and-cliques of the form~$(\alpha,\gamma)$ with~$b\notin\gamma$ are not stable.
Then the first clique of~$\omega\in\partial\M_\alpha$ being stable with~$\nu_\alpha$-probability~$1$,~$b$~divides~$\omega$~$\nu_\alpha$-\as, that is to say~$f_\alpha(b)=\nu_\alpha(\up b)=1$.
\end{proof}

The Markov measures that we study are defined on the space of infinite trajectories. Among them, some are not maximal with respect to the partial order~$\leq$, a property which is specific to concurrent systems: this is due to the fact some branch of the concurrent system could evolve with infinitely many events while leaving another branch steady. But with a probabilistic dynamics, and if
the system is irreducible, it is quite intuitive that this situation should actually not happen; this is the topic of the next result.

\begin{corollary}
\label{cor:4}
Assume that the system is irreducible and let~$\nu=(\nu_\alpha)_{\alpha\in X}$ be a Markov measure.
Then for every state~$\alpha\in X$, the property ``$\omega$~is maximal in the partial order~$(\Mbar,\leqslant)$'' is true~$\nu_\alpha$-almost surely.
\end{corollary}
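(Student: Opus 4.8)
The plan is to establish the statement in two independent stages. First, I would show that, $\nu_\alpha$-almost surely, every letter of the alphabet $\Sigma$ occurs in infinitely many of the cliques $C_j(\omega)$ of a random trajectory $\omega$. Second, I would prove the purely combinatorial fact that any generalized trace in which every letter occurs in infinitely many cliques is maximal for the order $(\Mbar,\leqslant)$. Combining the two immediately yields the corollary.

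For the first stage, recall from~\eqref{eq:8} that $\nu_\alpha(\BM_\alpha)=1$, so it suffices to reason on trajectories $\omega\in\BM_\alpha$. The set $N=\bigcup_{x\in\M_\alpha}\bigcup_{a\in\Sigma}x\BM_{\alpha\cdot x}^a$ is a countable union of sets that are $\nu_\alpha$-null by Corollary~\ref{cor:7}, hence $\nu_\alpha(N)=0$, and I would work with an arbitrary $\omega\in\BM_\alpha\setminus N$. Writing $\omega_n=C_1(\omega)\cdots C_n(\omega)$, the point is that, since $(C_1(\omega),C_2(\omega),\ldots)$ is already in generalized normal form, the residual $\omega_n^{-1}\omega$ is literally the tail $(C_{n+1}(\omega),C_{n+2}(\omega),\ldots)$; moreover this tail lies in $\BM_{\alpha\cdot\omega_n}$, since $y\leqslant\omega_n^{-1}\omega$ implies $\omega_n y\leqslant\omega\in\BM_\alpha$, hence $(\alpha\cdot\omega_n)\cdot y\neq\bot$. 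Therefore, if some letter $a$ occurred in only finitely many cliques of $\omega$, choosing $n$ past the last such occurrence would give $\omega_n^{-1}\omega\in\BM_{\alpha\cdot\omega_n}^a$, that is $\omega\in\omega_n\BM_{\alpha\cdot\omega_n}^a\subseteq N$, a contradiction. Hence every letter occurs at arbitrarily large levels in $\omega$.

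For the second stage, suppose every letter occurs in infinitely many cliques of $\omega\in\Mbar$, and let $\xi\in\Mbar$ with $\omega\leqslant\xi$; I would show $\xi=\omega$. First note that $\omega\leqslant\xi$ with $\omega\in\BM$ forces $\xi\in\BM$, by comparing the heights of the prefixes $\omega_n$ and $\xi_n=C_1(\xi)\cdots C_n(\xi)$. Then, for each $n$, the normal-form criterion~\eqref{eq:75} applied to $\omega_n\leqslant\xi_n$ furnishes cliques $\gamma_{n,1},\ldots,\gamma_{n,n}$ with $\gamma_{n,i}\parallel C_i(\omega),\ldots,C_n(\omega)$ and $C_i(\xi)=C_i(\omega)\gamma_{n,i}$. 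Left-cancellativity of $\M$ forces $\gamma_{n,i}$ to be independent of $n\geqslant i$; calling this clique $\gamma_i$, one has $C_i(\xi)=C_i(\omega)\gamma_i$ and $\gamma_i\cap C_j(\omega)=\emptyset$ for every $j\geqslant i$. A non-empty $\gamma_i$ would then contain a letter missing from all cliques $C_j(\omega)$ with $j\geqslant i$, contradicting the fact that this letter appears in infinitely many cliques of $\omega$; hence every $\gamma_i$ is empty, $C_i(\xi)=C_i(\omega)$ for all $i$, and $\xi=\omega$. The main obstacle is precisely this second stage: one has to choose the cliques $\gamma_{n,i}$ coherently in $n$ (which is exactly what left-cancellativity provides) and to argue that being parallel to $C_i(\omega),\ldots,C_n(\omega)$ for every $n$ genuinely rules out a persistent additional letter in $\xi$.
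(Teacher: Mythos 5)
Your proof is correct and follows essentially the same route as the paper's: both reduce failure of maximality to membership in the $\nu_\alpha$-null set $\bigcup_{x\in\M_\alpha}\bigcup_{a\in\Sigma}x\,\BM^a_{\alpha\cdot x}$, which is handled by Corollary~\ref{cor:7}. The only difference is that you spell out, via the normal-form criterion~\eqref{eq:75} and left-cancellativity, the combinatorial equivalence between ``$\omega$ is maximal'' and ``every letter occurs in infinitely many cliques of $\omega$,'' a step the paper's proof simply asserts.
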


\begin{proof}
Let~$\omega\in\BM_\alpha$ be not maximal in~$\Mbar$, with~$\omega=(c_i)_{i\geqslant1}$.
Then there is letter a~$a\in\Sigma$ and an integer~$N>1$ such that~$a\notin c_i$ for all~$i\geqslant N$.
Hence, the set~$\Lambda$ of non-maximal~$\omega$ is a subset of the following countable union:
\begin{gather*}
\Lambda\subseteq \bigcup_{x\in\M_\alpha}\bigcup_{a\in\Sigma}x\BM^{a}
\end{gather*}

Each subset~$x\BM^a$ has~$\nu_\alpha$ probability~$0$ according to Corollary~\ref{cor:7};
hence~$\nu_\alpha(\Lambda)=0$.
\end{proof}

\subsection{The kernel of the Möbius matrix}
\label{sec:kernel-mobius-matrix}

Given a valuation~$f$, we seek an effective way to determine the unique positive real~$r$ and the unique positive cocycle~$\Delta$ such that the transformed valuation~$\ftilde$ defined by~$\ftilde_\alpha(x)=r^{\len x}\Delta(\alpha,\alpha\cdot x)f_\alpha(x)$ is probabilistic.
It turns out that both of them are closely related to the Möbius matrix~$\Mob(t)$ introduced in~\S~\ref{sec:concurrent-systems}.
Recall that~$\Mob(t)$ is the polynomial matrix of size~$X\times X$ defined by~$\Mob_{\alpha,\beta}(t)=\sum_{c\in\C_{\alpha,\beta}}f_\alpha(c)(-1)^{\len c}t^{\len c}$.

Let the polynomial~$\theta(t)$ be defined by:
\begin{gather*}
\theta(t)=\det\Mob(t)
\end{gather*}

In the next result, we prove in particular that the kernel of the Möbius matrix $\Mob(\rho)$ evaluated at the root $\rho$ of the system, has dimension~$1$, which is a non trivial result. As a consequence, we derive a practical way---at least for small examples---to determine the positive cocycle normalizing a given valuation to a probabilistic valuation. See an example of use of this technique below.

\begin{theorem}
\label{thr:3}
Assume that the concurrent system is irreducible, equipped with a valuation~$f$.
Let~$\rho>0$ and\/~$\Delta:X\times X\to \RR_{>0}$ be the positive real and the positive cocycle making~$\ftilde$ probabilistic.
Then:
\begin{compactenum}
\item\label{item:7}~$\rho$ is the positive root of\/~$\theta(t)$ of smallest modulus among its complex roots.
\item\label{item:8}~$\dim\bigl(\ker\Mob(\rho)\bigr)=1$ and\/~$\ker\Mob(\rho)$ is generated by a positive vector~$U$.
Furthermore:
$\Delta(\alpha,\beta)=U_\beta/U_\alpha$.
\end{compactenum}
\end{theorem}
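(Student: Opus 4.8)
The key link between the Möbius matrix $\Mob(t)$ and the machinery of Theorem~\ref{thr:2} is the identity $\Mob(t) = (G(t))^{-1}$ of matrices of formal series, which holds as long as $G(t)$ converges, i.e.\ for $t \in [0,\rho)$. So the first step is to study the behaviour of $\Mob(t) G(t) = \Id$ as $t \to \rho^-$. After the normalization provided by Theorem~\ref{thr:2}, we may assume $f$ is already probabilistic, so that $\rho = 1$ and $\Delta \equiv 1$ (Corollary~\ref{cor:1}); the general case follows by replacing $f$ with $\ftilde$, noting that $\Mob(t)$ and the Möbius matrix $\widetilde\Mob(t)$ of $\ftilde$ are related by $\widetilde\Mob(t) = D^{-1}\Mob(rt)D$ where $D = \mathrm{diag}(\Delta(\alpha_0,\alpha))_{\alpha\in X}$, so that $\theta(t) = \det\Mob(t)$ vanishes at $\rho$ exactly when $\det\widetilde\Mob(1) = 0$, and $\ker\Mob(\rho) = D\ker\widetilde\Mob(1)$. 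Thus it suffices to prove both points for a probabilistic valuation at $t = 1$.

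Next I would exhibit an explicit nonzero vector in $\ker\Mob(1)$. For a probabilistic valuation $f$ with Möbius transform $h = (h_\alpha)_{\alpha\in X}$, the definition of the Möbius transform gives, for each $\alpha$, the relation $f_\alpha(\ve) = \sum_{\gamma \in \C} h_\alpha(\gamma) = 1$ and, by grouping cliques according to the state they reach, one computes
\begin{gather*}
\sum_{\beta \in X} \Mob_{\alpha,\beta}(1) \cdot 1
= \sum_{c \in \C_\alpha} f_\alpha(c)(-1)^{\len c}
= \sum_{c\in\C} h_\alpha(c)(-1)^{\len c}\!\!\sum_{c' \geqslant c}(-1)^{\len{c'}-\len c}\cdots
\end{gather*}
the point being that plugging $t = 1$ into $\Mob(t) = G(t)^{-1}$ and taking the limit, together with $h_\alpha(\ve)=0$ (condition~\eqref{eq:11}), yields $\Mob(1)\mathbf{1} = 0$, where $\mathbf{1}$ is the all-ones vector. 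So $\mathbf{1} \in \ker\Mob(1)$, which is positive; this gives the ``generated by a positive vector'' half of point~\ref{item:8} once we know the kernel is one-dimensional, and after undoing the normalization the positive generator becomes $U_\alpha = \Delta(\alpha_0,\alpha)$, whence $\Delta(\alpha,\beta) = U_\beta/U_\alpha$ as claimed.

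The main obstacle is proving $\dim\ker\Mob(\rho) = 1$ (equivalently, that $\rho$ is a \emph{simple} root of $\theta$ in the sense of algebraic multiplicity for the block structure). The plan here is to transfer the problem to the matrix $F^+$ of Lemma~\ref{lem:22}, which has spectral radius $1$, is an umbrella matrix, and whose basic-=-final components each carry a one-dimensional invariant space by Perron--Frobenius. Concretely, there is a standard ``loop-counting'' identity relating $G(t)$ to the resolvent $(\Id - F(t))^{-1}$ of the state-and-clique matrix: summing over trajectories decomposed into their successive cliques, one gets, for $t < \rho$,
\begin{gather*}
G_{\alpha,\beta}(t) = \sum_{c,d}\bigl[(\Id - F(t))^{-1}\bigr]_{(\alpha,c),(\beta,d)} \cdot (\text{boundary terms}),
\end{gather*}
so that the smallest pole of $G$, which is $\rho$, coincides with the smallest $t$ for which $\Id - F(t)$ is singular, and the order of the pole / dimension of the kernel is governed by the Jordan structure of $F(\rho) = F_1$ at eigenvalue $1$. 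By Lemma~\ref{lem:22} this eigenvalue comes only from $F_1^+$ (since $F_1^0$ has spectral radius $<1$), and the crux is to show that across all basic components of $F_1^+$ the invariant vectors glue into a \emph{single} line: this is where I expect to need that the concurrent system is irreducible in an essential way (not merely that $F_1^+$ is an umbrella matrix), using the uniqueness part of Theorem~\ref{thr:2} — two probabilistic valuations produce proportional invariant vectors on every component, and the argument in the proof of Theorem~\ref{thr:2} shows the proportionality constants all agree, forcing $\dim = 1$. Finally, $\det\Mob(\rho) = 0$ with $\rho$ the smallest such positive value gives point~\ref{item:7}, and simplicity of the root as a complex root of $\theta$ follows from the kernel being one-dimensional together with the fact (again via $F_1^+$ being umbrella, hence having no nontrivial Jordan block at its spectral radius) that the algebraic and geometric multiplicities coincide.
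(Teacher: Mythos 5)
Your reduction to a probabilistic valuation with $\rho=1$, $\Delta\equiv1$ is fine, and your argument for point~\ref{item:7} (pole of $\det G$ versus zero of $\det\Mob$) matches the paper's. The exhibition of $\mathbf{1}\in\ker\Mob(1)$ is also essentially what the paper does: after undoing the normalization, $U_\alpha=\Delta(\alpha_0,\alpha)$ is a positive null vector, and $(\Mob U)_\alpha=\Delta(\alpha_0,\alpha)\,\htilde_\alpha(\ve)=0$.

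The gap is in your plan for $\dim\ker\Mob(\rho)=1$. You propose to transfer the problem to the state-and-clique matrix $F_1^+$ and argue that ``across all basic components of $F_1^+$ the invariant vectors glue into a \emph{single} line.'' That claim is false: the invariant eigenspace of $F_1^+$ at its spectral radius has dimension equal to the number of basic components, which can be strictly greater than~$1$ even for an irreducible concurrent system --- the paper gives exactly such an example in \S~\ref{sec:an-example-with}, where the \DSCP\ has two distinct basic components. So a Perron--Frobenius/Jordan-block argument on $F_1^+$ by itself does not yield one-dimensionality of anything. What is true is that the \emph{projection} of these several invariant vectors onto $\RR^X$ is one-dimensional, but that projection step is precisely the content to be proved, not a reformulation of Perron--Frobenius. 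You also appeal vaguely to ``the argument in the proof of Theorem~\ref{thr:2}'' to make the proportionality constants agree, which is circular: Theorem~\ref{thr:2} is about uniqueness of $(r,\Gamma)$ as a pair making the valuation probabilistic, not about invariant vectors of $F_1^+$ or about $\ker\Mob(\rho)$, and the bridge between the two is the missing idea.

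The paper closes this gap differently: assume $W\in\ker\Mob(\rho)$ is not proportional to $U$, set $V=U+yW>0$ for $y$ small, form the cocycle $\Gamma(\alpha,\beta)=V_\beta/V_\alpha$ and the perturbed valuation $\varphi_\alpha(x)=\rho^{\len x}\Gamma(\alpha,\alpha\cdot x)f_\alpha(x)$ with Möbius transform $\psi$. Then one verifies condition~\eqref{eq:11} for $\psi$: $\psi_\alpha(\ve)=0$ uses $\Mob W=0$; $\psi_\alpha(c)>0$ on \emph{stable} state-and-cliques for small $y$ by continuity and $\htilde_\alpha(c)>0$; and the crucial step is that $\psi_\alpha(c)=0$ \emph{automatically} on non-stable state-and-cliques (regardless of $y$), which is obtained from Lemma~\ref{lem:8} combined with the block decomposition $F=\left(\begin{smallmatrix}F^+&X\\0&F^0\end{smallmatrix}\right)$ and the fact that $F^0$ has spectral radius $<1$. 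This shows $\varphi$ is probabilistic, so by the uniqueness in Theorem~\ref{thr:2} the cocycle $\Gamma$ equals $\Delta$, contradicting non-proportionality. You would need to supply an argument of this kind; the Jordan-structure route, as written, does not close.
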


\begin{proof}
\ref{item:7}.\quad Since~$\rho$ is the radius of convergence of the matrix series~$G(t)$ with non-negative coefficients, and since~$G(t)\Mob(t)=\Id$,~$\rho$~is indeed a root of smallest modulus of~$\theta(t)$.

\ref{item:8}.\quad We shorten the notation by putting~$\Mob=\Mob(\rho)$.
Let~$\htilde$ be the Möbius transform of~$\ftilde$.
Pick an arbitrary state~$\alpha_0\in X$ and put~$U_\alpha=\Delta(\alpha_0,\alpha)$ for~$\alpha\in X$.
Using the cocycle property of~$\Delta$ and the definition of~$\Mob$ yields:
\begin{gather*}
(\Mob U)_\alpha=\Delta(\alpha_0,\alpha)\sum_{\beta\in X}\sum_{c\in\C_{\alpha,\beta}}(-1)^{\len c}\rho^{\len c}f_\alpha(c)\Delta(\alpha,\beta)=\Delta(\alpha_0,\alpha)\htilde_\alpha(\ve)=0
\end{gather*}
Hence, the vector~$U=(U_\alpha)_{\alpha\in X}$ is a positive null vector of~$\Mob$.

Seeking a contradiction, assume that~$\dim(\ker\Mob)>1$.
Let~$W$ be a null vector of~$\Mob$ not proportional to~$U$, and let~$V=U+yW$ where~$y$ is a real small enough such that~$V>0$;
let~$\Gamma$ be the cocycle~$\Gamma(\alpha,\beta)=V_\beta/V_\alpha$, let~$\varphi$ be the valuation~$\varphi_\alpha(x)=\rho^{\len x}\Gamma(\alpha,\alpha\cdot x)f_\alpha(x)$ and let~$\psi$ be the Möbius transform of~$\varphi$.
Then the definition of the Möbius transform yields, for every~$\alpha\in X$ and~$c\in\C_\alpha$:
\begin{gather}
\label{eq:37}
\psi_\alpha(c)=\frac1{V_\alpha}\Bigl(\Delta(\alpha_0,\alpha)\htilde_\alpha(c)+y
\sum_{c'\in\C\tq c\leqslant c'}(-1)^{\len {c'}-\len c}\rho^{\len {c'}}W_{\alpha\cdot c'}f_\alpha(c')\Bigr)
\end{gather}
We claim that:
\begin{compactenum}
\item[(\ddag)] For~$y$ small enough,~$\psi$~is a probabilistic valuation.
\end{compactenum}
Firstly, from~\eqref{eq:37} follows~$\psi_\alpha(\ve)=(\Delta(\alpha_0,\alpha)\htilde_\alpha(\ve)+y(\Mob W)_\alpha)/V_\alpha=0$.
Secondly, if~$(\alpha,c)$ is a stable state-and-clique, then~$\htilde_\alpha(c)>0$ according to Lemma~\ref{lem:11}, and thus~\eqref{eq:37} implies that~$\psi_\alpha(c)>0$ for~$y$ small enough.
Thirdly, we prove that~$\psi_\alpha(c)=0$ if~$(\alpha,c)$ is not a stable state-and-clique.
Lemma~\ref{lem:8} applies to the valuation~$\varphi$ since~$\psi_\alpha(\ve)=0$ for all~$\alpha\in X$.
Therefore,  the vector~$v_{(\alpha,c)}=\Gamma(\alpha_0,\alpha)\psi_\alpha(c)$ satisfies~$Fv=v$, where~$F$ is the matrix defined by~$F_{(\alpha,c),(\beta,d)}=\rho^{\len c}f_\alpha(c)$.

We know from Lemma~\ref{lem:22} that~$F$ has the block shape~$F=\left(\begin{smallmatrix}
F^+ &X\\0&F^0
\end{smallmatrix}
\right)$ where stable state-and-cliques are put first.
Hence, if~$v=\left(\begin{smallmatrix}
v^+\\v^0
\end{smallmatrix}
\right)$ is the corresponding decomposition of~$v$ then~$F^0v^0=v^0$;
but~$F^0$ has spectral radius~$<1$ hence~$v^0=0$ and thus~$\psi_\alpha(c)=0$ if~$(\alpha,c)$ is not a stable state-and-clique, which was to be proved.

We have obtained that~$\psi_\alpha(\ve)=0$ for all~$\alpha\in X$ and that~$\psi_\alpha(c)\geqslant0$ for all state-and-cliques~$(\alpha,c)$;
these are the conditions~\eqref{eq:11} to ensure that~$\psi$ is a probabilistic valuation, proving the claim~(\ddag).

By the uniqueness statement of Theorem~\ref{thr:2}, we conclude that~$\Delta=\Gamma$, but this contradicts that~$W$ has been chosen not proportional to~$U$, and thus~$\dim(\ker\Mob)=1$.

Since~$U_\alpha=\Delta(\alpha_0,\alpha)$, the cocycle property of~$\Delta$ yields~$\Delta(\alpha,\beta)=U_\beta/U_\alpha$, as claimed.
\end{proof}

\paragraph{Example.}

Let us give a new look at the example introduced in~\S~\ref{sec:concurrent-systems}, the action of which is depicted on Figure~\ref{fig:qwqwdqaazaxcpow}.
Starting from the counting valuation, the Möbius matrix, indexed by the states~$0$,~$1$~and~$2$ in this order, is given by:
\begin{gather*}
\Mob(t)=
\begin{pmatrix}
1&-t&-t+t^2\\
-t&1&-t+t^2\\
-t&0&1-2t+t^2
\end{pmatrix}
\quad\theta(t)=\det\Mob(t)=(1-t^2)(1-2t)
\end{gather*}
hence~$\rho=\frac12$.
Then~$\ker\Mob(\frac12)$ is generated by the positive vector~$U=\left(
\begin{smallmatrix}
1\\1\\2
\end{smallmatrix}
\right)$, which determines the cocycle~$\Delta$.
The computation yields the values for~$\lambda_\alpha(x)=\rho^{\len x}\Delta(\alpha,\alpha\cdot x)$ given in~\eqref{eq:36} for~$x\in\Sigma$.

In this example, the \DSC\ contains 10 state-and-cliques, already depicted on Figure~\ref{fig:qwdqqwfgkojngfffff}.
Two of them are not stable:
$(0,a)$ and~$(1,a)$.
Indeed, with~$h$ the Möbius transform of~$\lambda$:
\begin{align*}
h_0(a)&=\lambda_0(a)-\lambda_0(a)\lambda_1(b)=0
&h_1(a)&=\lambda_1(a)-\lambda_1(a)\lambda_0(b)=0
\end{align*}

The \DSCP\ contains the 8 remaining state-and-cliques, all stable.
It has a unique basic component with 6 state-and-cliques: see Figure~\ref{fig:asdopijqa}.

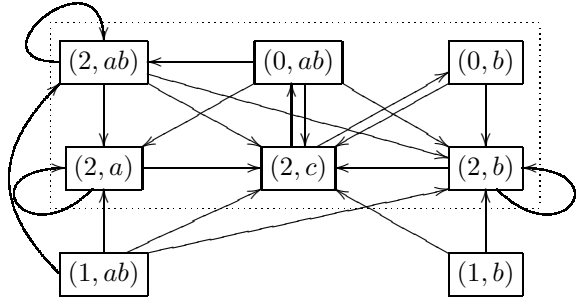
\begin{figure}
  \begin{gather*}
        \xymatrix@C=4em{
      *+[F]{(2,ab)}\POS!L\ar@(l,u)[]!U\POS[]\ar[d]\POS!D!R\ar[dr]!U!L\POS[]\ar[drr]&
      *+[F]{(0,ab)}\ar[l]\POS!D!L\ar[dl]!U!R\POS[]\ar@<.6ex>[d]\ar@<-.6ex>@{<-}[d]\POS!D!R\ar[dr]!U!L&
      *+[F]{(0,b)}\ar[d]\POS!D!L\ar[dl]!U!R\POS[]\POS!L!D(.5)\ar@{<-}[dl]!U!R(.5)\\
      *+[F]{(2,a)}\ar[r]\POS!D!L(.3)\ar@(dl,l)[]!L&
      *+[F]{(2,c)}&
      *+[F]{(2,b)}\ar[l]\POS!D!R(.3)\ar@(dr,r)[]!R\\
      *+[F]{(1,ab)}\ar[u]\POS!U!R(.5)\ar[ur]!D!L\POS!U!R\ar[urr]!D!L\POS[]\POS!L\ar@(ul,dl)[uu]!D!L&
      &
      *+[F]{(1,b)}\ar[u]\ar[ul]!D!R
      \save"1,1"+<-2em,1.5em>.{"2,3"+<2em,-1.5em>}*[F.]\frm{}\restore
}
\end{gather*}
\caption{The \DSCP\ of our running example.
The single basic component is framed.
Compare with the \DSC\ depicted on Figure~\ref{fig:qwdqqwfgkojngfffff}.}
\label{fig:asdopijqa}
\end{figure}

\begin{remark}
Since the valuation~$\lambda$ previously defined in~\eqref{eq:36} is probabilistic, and since~$\lambda_0(b)=1$ and~$\lambda_1(b)=1$, it follows from Corollary~\ref{cor:5} that~$f_0(b)=1$ and~$f_1(b)=1$ both hold for \emph{any} probabilistic valuation~$f$.
\end{remark}

\section{Ergodic properties of irreducible concurrent systems}
\label{sec:ergod-prop-irred}

\subsection{An ergodic property}
\label{sec:an-ergod-prop}

We now come to the study of the ergodic properties of irreducible concurrent systems.
Since the Markov chain of state-and-cliques is not ergodic in general, as its digraph may not be strongly connected, the ergodic properties must rely on the intrinsic concurrent systems itself, and not on its combinatorial representation.

At the scale of the intrinsic system, there is however no global time available.
Henceforth the standard formulation of ergodicity does not apply in a straightforward way, whence the following definition.

\begin{definition}
Let~$\SS=(\M,X)$ be a concurrent system equipped with a Markov measure~$\nu$, and let~$Z=(Z_\alpha)_{\alpha \in X}$ be a family of measurable functions~$Z_\alpha:\partial\M_\alpha\to X$.
It is \emph{shift-invariant} if for all~$\alpha\in X$ and for~$\nu_\alpha$-\as\ every~$\omega\in\BM_\alpha$:
\begin{gather}
\label{eq:38}
\forall x\in\M_\alpha\quad x\leqslant\omega\implies Z_\alpha(\omega)=Z_{\alpha\cdot x}(x^{-1}\omega)
\end{gather}
where~$x^{-1}\omega$ is the unique~$\xi\in\BM_{\alpha\cdot x}$ such that~$\omega=x\xi$.
\end{definition}

\begin{theorem}
\label{thr:4}
Let~$\SS=(\M,X)$ be an irreducible concurrent system equipped with a Markov measure~$\nu$.
If\/~$Z=(Z_\alpha)_{\alpha\in X}$ is a family of non-negative or bounded functions, and if\/~$Z$ is shift-invariant, then there is a constant~$z$ such that\/~$Z_\alpha(\omega)=z$ for all~$\alpha\in X$ and for~$\nu_\alpha$-\as\ every~$\omega\in\BM_\alpha$.
\end{theorem}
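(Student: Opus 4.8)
The plan is to reduce the statement to the ergodicity of the Markov chain of state-and-cliques on each basic component of $\DSCP$, and then glue the constants across basic components by using the irreducibility of $\SS$. The key point is that, although $\DSC$ and even $\DSCP$ need not be strongly connected, each basic component is an irreducible recurrent class of the underlying Markov chain, and the Markov chain does reach $\DSCP$ and then a basic component almost surely (Corollary~\ref{cor:8} and the block shape from Lemma~\ref{lem:22}).

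First I would translate shift-invariance of $Z=(Z_\alpha)_{\alpha\in X}$ into a statement about the Markov chain of state-and-cliques. Fix $\alpha\in X$ and let $(\alpha_i,C_{i+1})_{i\geqslant0}$ be the Markov chain of state-and-cliques on $(\BM_\alpha,\nu_\alpha)$, with $\alpha_0=\alpha$. For a finite trace $x\leqslant\omega$ with $\height x=n$, one has $\alpha\cdot x=\alpha_n$ and the shifted generalized trace $x^{-1}\omega$ is, up to the first clique, governed by the same chain started from $(\alpha_n,C_{n+1})$; more precisely, applying~\eqref{eq:38} along the prefixes $\omega_n=C_1(\omega)\cdots C_n(\omega)$ shows that the sequence $Z_{\alpha_n}(\omega_n^{-1}\omega)$ is $\nu_\alpha$-a.s.\ constant in $n$, equal to $Z_\alpha(\omega)$. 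Define $\widetilde Z(\alpha,c)$, for a stable state-and-clique $(\alpha,c)$, as the $\nu_\alpha$-essential value of $Z_{\alpha}$ conditioned on $C_1=c$; by the Markov property this is well defined, and the a.s.\ constancy just obtained says that along almost every trajectory the value $\widetilde Z(\alpha_n,C_{n+1})$ is eventually constant. Hence $\widetilde Z$ is constant on each set of state-and-cliques that communicate, i.e.\ on each strongly connected subset of $\DSCP$; in particular $\widetilde Z$ is constant, say equal to $z_N$, on each basic component $N$ of $F^+$.

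Next I would show all the constants $z_N$ coincide and equal a common value $z$ independent of $\alpha$. Since $F^+$ is an umbrella matrix (Lemma~\ref{lem:22}, point~\ref{item:9}), each basic component is final, so the Markov chain of state-and-cliques started from any $(\alpha,c)\in\DSCP$ eventually gets absorbed into a single basic component; combined with Corollary~\ref{cor:8}, which says the chain stays in $\DSCP$ for $\nu_\alpha$-a.e.\ trajectory regardless of the initial state, we get that for every $\alpha\in X$ the random variable $Z_\alpha(\omega)$ equals $z_{N(\omega)}$ a.s., where $N(\omega)$ is the (random) basic component absorbing $\omega$. Now use irreducibility of $\SS$: given two basic components $N,N'$, pick $(\beta,d)\in N$ with a protection $x$ of $(\beta,d)$, pick a state $\alpha$ and a trace $y\in\M_\alpha$ with $\alpha\cdot y=\beta$, and pick a further trace $w$ so that the trajectory continues into $N'$ with positive probability after having passed through $(\beta,d)$; since $N$ is final this is impossible unless $N=N'$. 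Thus all basic components reachable from a common state carry the same constant, and transitivity of $\SS$ forces a single global constant $z$.

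The main obstacle I expect is the first, translational step: carefully justifying that the "$\nu_\alpha$-a.s." quantifier in~\eqref{eq:38} can be applied simultaneously along \emph{all} prefixes $\omega_n\leqslant\omega$ and transported to the shifted measures $\nu_{\alpha_n}=\nu_\alpha\circ(\omega_n\cdot)^{-1}$ in a way that is compatible with the Markov chain structure. The clean way is to invoke the chain rule~\eqref{eq:10} to identify $\nu_\alpha(\,\cdot\mid \up{\omega_n})$ with the pushforward of $\nu_{\alpha_n}$ under $\xi\mapsto\omega_n\xi$, restricted to $\BM_{\alpha_n}$; shift-invariance applied to each $\alpha_n$ then yields, by a countable union over $n$ and over traces, a single null set outside of which $n\mapsto Z_{\alpha_n}(\omega_n^{-1}\omega)$ is constant. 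Once this is in place, the remaining steps are routine finite-state Markov chain arguments (every finite chain is a.s.\ absorbed into a recurrent class, on which it is ergodic) together with the combinatorial reachability afforded by the irreducibility hypothesis on $\SS$, exactly as encoded in Lemmas~\ref{lem:11} and~\ref{lem:22}.
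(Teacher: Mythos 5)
Your first two steps are essentially correct but only invoke shift-invariance~\eqref{eq:38} along the \emph{normal} prefixes $\omega_n = C_1(\omega)\cdots C_n(\omega)$. Since the normal form of $\omega_n^{-1}\omega$ is exactly the tail $(C_{n+1}(\omega), C_{n+2}(\omega), \dots)$, this correctly shows that $Z_\alpha$ is a tail-measurable function of the Markov chain of state-and-cliques, hence $Z_\alpha(\omega) = z_{N(\omega)}$ a.s., where $N(\omega)$ is the (random) final component of $\DSCP$ that absorbs the chain and $z_N$ is a constant for each basic component $N$. The real difficulty, contrary to your closing paragraph, is the gluing step, and your argument there breaks. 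You try to push a trajectory already in $N$ (after passing through $(\beta,d)$, protected by $x$) onward into $N'$, and observe this ``is impossible unless $N=N'$''; but the impossibility stems solely from $N$ being final, and $N=N'$ does not follow --- \S\ref{sec:an-example-with} exhibits an irreducible system whose $\DSCP$ has two non-isomorphic basic components. So you have not shown that the constants $z_N$ coincide, which is exactly the content of the theorem beyond the routine finite-Markov-chain tail triviality.

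What is missing is the observation that the recentering relating $z_N$ to $z_{N'}$ must be performed by a \emph{non-normal} prefix $x\leqslant\omega$: a trajectory whose clique decomposition lives forever in $N$ can, after left-division by a suitable $x$ that does not respect the Cartier--Foata cut points, have a clique decomposition living in $N'$ (this is precisely Proposition~\ref{prop:1}, whose proof itself relies on the stopping-time machinery). Your translational step discards this extra strength of~\eqref{eq:38}. The paper sidesteps all basic-component bookkeeping by a different route: it introduces the hitting time $T^a$ of a fixed letter $a$ --- by construction not adapted to the height filtration --- proves in Lemma~\ref{lem:4} that the induced chain $X_n = \alpha\cdot(T^a)^n$ on \emph{states} has a unique closed class, and then combines shift-invariance, the Strong Markov property at successive returns to a recurrent state $\alpha_0$, and martingale convergence along $\bigvee_n\G_n = \F$ to obtain $Z_\alpha = \esp_{\alpha_0}(Z_{\alpha_0})$ directly. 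The uniqueness of the closed class for $X_n$ does exactly the work that your absent gluing argument was supposed to do.
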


\begin{proof}
The proof proceeds in three parts.
We first establish a theory of stopping times for concurrent systems, yielding to a Strong Markov property for concurrent systems.
The second part introduces hitting times and a new Markov chain attached to the irreducible concurrent system.
The last part applies these results to conclude the proof.

\emph{First part:
stopping times and the Strong Markov property.}\quad Define a \emph{stopping time} as a family~$T=(T_\alpha)_{\alpha\in X}$ of mappings~$T_\alpha:\BM_\alpha\to\Mbar_\alpha$ such that, for all~$\alpha\in X$:
\begin{gather}
\label{eq:39}
\forall \omega\in\BM_\alpha\quad T_\alpha(\omega)\leqslant\omega\\
\label{eq:40}
\forall \omega,\omega'\in\BM_\alpha\quad T_\alpha(\omega)\leqslant\omega'\implies T_\alpha(\omega')=T_\alpha(\omega)
\end{gather}
We say that~$T$ is~$\nu$-\as\ finite if~$T_\alpha(\omega)\in\M_\alpha$ for~$\nu_\alpha$-\as\ every~$\omega\in\BM_\alpha$.

If~$T$ is~$\nu$-\as\ finite, the \emph{shift operator} associated to~$T$ is the family of mappings~$\theta_T$ defined, for~$\alpha\in X$ and~$\omega\in\BM_\alpha$, by~$\theta_T(\omega)=(T(\omega))^{-1}\omega$, where the dependence to~$\alpha$ is dropped to shorten the notation.

If~$S$ and~$T$ are two stopping times, the stopping time~$S\circ T$ is defined, for~$\alpha\in X$, on~$\{T_\alpha(\omega)\in\M_\alpha\}$, by:
\begin{gather}
\label{eq:41}
(S\circ T)_\alpha(\omega)=T_\alpha(\omega)S_{\alpha\cdot T(\omega)}\bigl(\theta_T(\omega)\bigr)
\end{gather}
In particular, the iterates~$T^n$ of a stopping time~$T$ are defined, for all~$n\geqslant0$, by:
\begin{gather}
\label{eq:42}
T^0=\ve,\qquad\forall n\geqslant0\quad T^{n+1}=T\circ T^n
\end{gather}

If~$T$ is a stopping time, let~$\T_\alpha=\{T_\alpha(\omega)\tq \omega\in\BM_\alpha\}$.
The family of \emph{\slgb s associated with~$T$} is the family~$\F=(\F_\alpha)_{\alpha\in X}$ where~$\F_\alpha$ is the \slgb\ generated by the at most countable collection of visual cylinders of the form~$\up x$, for~$x$ ranging over~$\T_\alpha\cap\M_\alpha$.

The \emph{Strong Markov property} then states as follows:
if~$Z=(Z_\alpha)_{\alpha\in X}$ is a collection of mappings~$Z_\alpha\to\RR$, either bounded or non-negative, and if~$T$ is a stopping time, then:
\begin{gather}
\label{eq:43}
\forall\alpha\in X\quad\text{$\nu_\alpha$-\as\quad} \esp_\alpha(Z_{\alpha\cdot T_\alpha}\circ T_\alpha|\F_\alpha)=\esp_{T_\alpha}(Z_{\alpha\cdot T_\alpha})
\end{gather}
where~$\esp_\alpha$ denotes the expectation with respect to~$\nu_\alpha$, and where both members are conventionally~$0$ where~$T_\alpha$ is not finite.
The proof is straightforward from the definitions, hence we omit it.

\medskip
\emph{Second part:
hitting times.}\quad In Markov chain theory, one is accustomed with the notion of hitting time of a state.
Here, there is no natural notion of ``first hitting time of a state'';
for, if~$x,y\leqslant\omega$ are such that~$\alpha\cdot x=\beta$ and~$\alpha\cdot y=\beta$, nothing ensures that~$x\wedge y$ would satisfy~$\alpha\cdot (x\wedge y)=\beta$.
Hence, there is no well-defined notion of ``first hitting'' a given state.
By contrast, hitting a letter of the base alphabet of the monoid is a well-defined notion.

Let~$a\in\Sigma$.
For~$\omega\in\partial\M_\alpha$, if there is at least some~$x\leqslant\omega$ with an occurrence of~$a$, then we define the first hitting time of~$a$ by:
\begin{gather}
\label{eq:44}
T^a_\alpha(\omega)=\min\{x\in\M_\alpha\tq x\leqslant\omega\text{ and }\len x_a>0\}
\end{gather}
where~$\len x_a$ denotes the number of occurrences of~$a$ in~$x$, which is indeed a trace quantity;
Then~$T^a=(T^a_\alpha)_{\alpha\in X}$ is a stopping time in the sense of~\eqref{eq:39} and~\eqref{eq:40}.

\begin{lemma}
\label{lem:5}
Assume that~$\SS$ is irreducible and equipped with a Markov measure~$\nu$.
For any~$a\in\Sigma$,~$T^a$~and all its iterates~$(T^a)^n$ are~$\nu$-\as\ finite, and:
\begin{gather*}
  \forall\alpha\in X\quad\forall k\geqslant0\quad\esp_\alpha\bigl[\len{T^a_\alpha}^k\bigr]<\infty
\end{gather*}

Furthermore, if~$\F^n$ is the \slgb\ associated with~$(T^a)^n$, then:
\begin{gather}
\label{eq:45}
\bigvee_{n\geqslant0}\F^n=\F
\end{gather}
And for every~$\alpha\in X$ and for~$\nu_\alpha$-\as\ every~$\omega\in\BM_\alpha$:
\begin{gather}
\label{eq:51}
\omega=\bigvee\bigl\{(T^a)^n(\omega)\tq n\geqslant0\bigr\}
\end{gather}
\end{lemma}

\begin{proof}
If~$T_\alpha^a(\omega)\notin\M$ then~$a$ does not occur in~$\omega$.
Hence,~$\{T_\alpha^a\notin\M\}\subseteq\BM_\alpha^a$, but~$\nu_\alpha(\BM^a)=0$ by Corollary~\ref{cor:7}, hence~$T_\alpha^a$ is~$\nu_\alpha$-\as\ finite.
It follows by induction, and with the Strong Markov property~\eqref{eq:43}, that all the iterates~$(T_\alpha^a)^n$ are~$\nu_\alpha$-\as\ finite as well.

Let~$\alpha\in X$, and let~$\Omega\subseteq\BM_\alpha$ such that~$\nu_\alpha(\Omega)=1$, and~$T_\alpha^a$ and all its iterates are finite on~$\Omega$.
Define~$\T^a_\alpha=\{T^a_\alpha(\omega)\tq\omega\in\Omega\}$, the set of finite values of~$T_\alpha^a$ on~$\BM_\alpha$.
Then the stopping time properties~\eqref{eq:39} and~\eqref{eq:40} imply that~$\Omega$ can be written as the disjoint union:
\begin{gather*}
  \Omega=\bigsqcup_{x\in\T^a_\alpha}\up x
\end{gather*}
Henceforth, for every integer~$k\geqslant0$, the~$k^\text{th}$ moment of~$\len{T^a_\alpha}$ is:
\begin{gather}
\label{eq:68}
\esp_\alpha\bigl[\len{T^a_\alpha}  ^k\bigr]=\sum_{x\in\T^a_\alpha}{\len x}^kf_\alpha(x)
\end{gather}
where~$f=(f_\alpha)_{\alpha\in X}$ is the probabilistic valuation associated to the Markov measure~$\nu$.
Every~$x\in\T_\alpha^a$ writes as~$x=ya$ with~$y\in\M_\alpha^a$.
Therefore~\eqref{eq:68} implies:
\begin{gather*}
  \esp_\alpha\bigl[\len{T^a_\alpha}  ^k\bigr]\leqslant\sum_{y\in\M_\alpha^a}(1+\len y)^kf_\alpha(y)<\infty
\end{gather*}

Indeed, the series converges by the following argument:
\begin{inparaenum}[1:]
\item the radius of convergence of the series~$\sum_{y\in\M_\alpha}s^{\len y}f_\alpha(y)$ is~$1$ by Corollary~\ref{cor:1}; and
\item the radius of convergence of the series~$\sum_{y\in\M_\alpha^a}s^{\len y}f_\alpha(y)$ is thus greater than~$1$, thanks to Theorem~\ref{thr:1}.
\end{inparaenum}

We now prove~\eqref{eq:51}.
For~$\omega\in\Omega$, let~$\xi(\omega)=\bigvee_{n\geqslant0}(T_\alpha^a )^n(\omega)$, which is well defined thanks to Proposition~\ref{prop:3}, point~\ref{item:4987}.
We wish to prove that~$\xi=\omega$ for~$\nu_\alpha$-\as\ every~$\omega\in\Omega$.
Let~$N=\inf\{n\geqslant0\tq C_n(\omega)\neq C_n(\xi)\}$.
Then~$N<\infty$ on~$\{\xi\neq\omega\}$, and there is a letter~$b\in C_N(\omega)$ such that~$b\notin C_N(\xi)$.
We claim that:
\begin{gather}
  \label{eq:74}
  \text{~$b\notin C_n(\xi)$ for all~$n\geqslant N$}
\end{gather}

Indeed, let~$n\geqslant N$.
Referring to the definition~\eqref{eq:73} of~$\xi\leqslant\omega$, one has~$C_1(\xi)\cdots C_n(\xi)\leqslant C_1(\omega)\cdots C_n(\omega)$, and thus:
\begin{gather*}
C_{N}(\xi)\cdots C_n(\xi)\leqslant C_{N}(\omega)\cdots C_n(\omega)  
\end{gather*}
From the characterization~\eqref{eq:75} of the order~$(\M,\leqslant)$ follows the implication~$b\notin C_N(\omega)\implies b\notin C_{N+1}(\omega),\ldots, C_n(\omega)$, which proves~\eqref{eq:74}.
Consequently:

\begin{gather*}
\{\xi\neq\omega\}\subseteq\bigcup_{(x,b)\in\M_\alpha\times \Sigma} x\BM_{\alpha\cdot x}^b
\end{gather*}
a countable collection of~$\nu_\alpha$-null sets according to Corollary~\ref{cor:7}, and thus~$\xi=\omega$,~$\nu_{\alpha}$-\as, which proves~\eqref{eq:51}.

Finally, to prove~\eqref{eq:45}, let~$x\in\M_\alpha$.
Define the stopping time~$S_\alpha$ by
\begin{gather*}
  R_\alpha(\omega)=(T^a)^{N(\omega)}(\omega),\quad N(\omega)=\inf\{n\geqslant0\tq x\leqslant (T^a)^n(\omega)\}
\end{gather*}
if~$N(\omega)<\infty$, and by~$R_\alpha(\omega)=\omega$ otherwise.
Then~\eqref{eq:51} shows that~$N(\omega)<\infty$ for~$\nu_\alpha$-\as\ every~$\omega\in\up x$.
Therefore~$\up x$ can be written as a countable and disjoint union of the form:
\begin{gather*}
  \up x=\bigsqcup_{y\in\R_\alpha^a}\up y
\end{gather*}
where~$\R^a_\alpha=\{R_\alpha(\omega)\tq\omega\in\BM_\alpha,\quad N(\omega)<\infty\}$.
Hence,~$\up x\in\bigvee_{n\geqslant0}\F_\alpha^n$ and thus~$\F=\bigvee_{n\geqslant0}\F^n$ since~$\F_\alpha=\sigma\langle\up x\tq x\in\M_\alpha\rangle$.

\end{proof}

We now introduce a new Markov chain induced by the system.
With the letter~$a\in\Sigma$ still fixed, pick an initial state~$\alpha\in X$ and let~$\seq Xn$ be defined by:
\begin{gather}
\label{eq:53}
\forall n\geqslant0\quad X_{n}=\alpha\cdot (T^a)^n
\end{gather}
which is~$\nu_\alpha$-\as\ well defined.
In particular~$X_0=\alpha$, and then~$X_n$ is the state reached by the process at its~$n^{\text{th}}$~hitting of the letter~$a$.

\begin{lemma}
\label{lem:4}
The process~$\seq Xn$ defined by~\eqref{eq:53} is a Markov chain with a unique closed class.
\end{lemma}

\begin{proof}
For~$\alpha,\beta\in X$, let~$\T_\alpha^\beta=\{T^a(\omega)\tq \omega\in\BM_\alpha,\ \alpha\cdot T^a(\omega)=\beta\}$.
For any integer~$n\geqslant0$ and any sequence~$x_0=\alpha$ and~$x_1,\ldots,x_{n+1}\in X$, we have:
\begin{align*}
\nu_\alpha(X_0=x_0,\ldots,X_{n+1}=x_{n+1})&=\sum_{\substack{t_i\in\T_{x_{i-1}}^{x_i},\;1\leqslant i\leqslant n+1}}\nu_\alpha\bigl(\up(t_1\cdots t_{n+1})\bigr)\\
&=\sum_{\substack{t_i\in\T_{x_{i-1}}^{x_i},\;1\leqslant i\leqslant n+1}}\nu_\alpha\bigl(\up(t_1\cdots t_{n})\bigr)\nu_{x_n}(\up t_{n+1})\\
&=\nu_\alpha(X_0=x_0,\ldots,X_{n}=x_{n})\nu_{x_n}(X_1=x_{n+1})
\end{align*}
which proves that~$\seq Xn$ is a Markov chain, with transition probability from~$\alpha$ to~$\beta$ given by~$\nu_\alpha(X_1=\beta)=\nu_\alpha(\alpha\cdot T_\alpha^a=\beta)$.

To prove that the chain~$\seq Xn$ has a unique closed class, it is enough to prove the following confluence property:
for every two states~$\alpha,\beta\in X$, there is a state~$\gamma\in X$ such that both~$\alpha$ and~$\beta$ lead to~$\gamma$ in the graph of the chain.
Indeed, let~$x\in\M_\beta$ such that~$\beta\cdot x=\alpha$.
It follows from~\eqref{eq:51} that there is a trace~$y\in\M_\beta$ of the form~$y=(T^a)^n$ such that~$x\leqslant y$.
And then~$\gamma=\beta\cdot y$ is suitable since~$x^{-1}y\in\M_\alpha$ is also of the form~$(T^a)^m$.
\end{proof}

\begin{corollary}
\label{cor:6}
For every letter~$a\in\Sigma$, there exists at least a state~$\alpha_0\in X$ such that, for every~$\alpha\in X$ and for~$\nu_\alpha$-\as\ every~$\omega\in\BM_\alpha$, the sequence~$((T^a)^n)_{n\geqslant0}$ satisfies~$\alpha\cdot (T^a)^n=\alpha_0$ for infinitely many~$n\geqslant0$.
\end{corollary}

\begin{proof}
The Markov chain~$\seq Xn$ associated to~$a$ has a unique closed class according to Lemma~\ref{lem:4}.
Then any state~$\alpha_0$ of this class is recurrent and is reached infinitely often by \as\ every trajectory, hence it is suitable.
\end{proof}

\emph{Third part:
conclusion.}\quad
Assume that~$Z=(Z_\alpha)_{\alpha\in X}$ is a shift-invariant family of non-negative or bounded random variables.
Pick~$a\in\Sigma$ an arbitrary letter and let~$((T^a)^n)_{n\geqslant0}$ be the sequence of iterated hitting times of~$a$.
Let also~$\alpha_0\in X$ be chosen according to Corollary~\ref{cor:6}, and introduce the stopping time
\begin{gather*}
S(\omega)=(T^a)^{N(\omega)}(\omega)\qquad\text{with }N(\omega)=\inf\{n\geqslant0\tq\alpha\cdot (T^a)^n(\omega)=\alpha_0\}
\end{gather*}
It follows from Corollary~\ref{cor:6} that~$S$ and all its iterates~$S^n$ are~$\nu_\alpha$-\as\ finite.

Let~$\G_n$ be the \slgb\ associated to the stopping time~$S^n$.
By the Strong Markov property~\eqref{eq:43} on the one hand, and by using the shift invariance of~$Z$ on the other hand, one has:
\begin{gather}
\label{eq:46}
\text{$\nu_\alpha$-\as}\quad\esp_{\alpha}(Z_{\alpha}|
\G_n)=\esp_{\alpha\cdot S^n}(Z_{\alpha\cdot S^n})
\end{gather}
And since~$\alpha\cdot S^n=\alpha_0$ by construction of~$S$, we obtain~$\esp_\alpha(Z_\alpha|\G_n)=\esp_{\alpha_0}(Z_{\alpha_0})$.
Since~$\bigvee\G_n=\F$ as a consequence of~\eqref{eq:45}, the martingale convergence theorem implies~$Z_\alpha=\esp_{\alpha_0}(Z_{\alpha_0})$,~$\nu_\alpha$-\as, completing the proof.
\end{proof}

\subsection{A Strong Law of Large Numbers}
\label{sec:law-large-numbers}

Let us call a \emph{test function} any family of functions~$\varphi=(\varphi_\alpha)_{\alpha\in X}$ with~$\varphi_\alpha:\M_\alpha\to \RR$ such that, either:
\begin{compactenum}
\item~$\varphi$ is additive, meaning:\quad~$\forall\alpha\in X\quad\forall x\in\M_\alpha\quad\forall y\in\M_{\alpha\cdot x}\quad\varphi_\alpha(xy)=\varphi_\alpha(x)+\varphi_{\alpha\cdot x}(y)$;
or
\item~$\varphi$ is non-negative, non-decreasing and sub-additive, the latter meaning, for all~$\alpha\in X$,~$x\in\M_\alpha$ and~$y\in\M_{\alpha\cdot x}$:
  \begin{gather*}
\varphi_\alpha(x)\leqslant\varphi_\alpha(xy)\leqslant\varphi_\alpha(x)+\varphi_{\alpha\cdot x}(y)    
  \end{gather*}
\end{compactenum}

Any additive function~$\varphi$ is entirely determined by the values~$\varphi_\alpha(a)$ for~$\alpha\in X$ and~$a\in\Sigma$ such that~$\alpha\cdot a\neq\bot$, and provided that~$\varphi_\alpha(a)+\varphi_{\alpha\cdot a}(b)=\varphi_\alpha(b)+\varphi_{\alpha\cdot b}(a)$ whenever~$ab\in\M_\alpha$ and~$ab=ba$.
An interesting example of a sub-additive function is the height function~$\varphi_\alpha(x)=\height x$.

The \emph{ergodic means} associated to a test function~$\varphi$ are defined, for any non-empty trace~$x\in\M_\alpha$, by:
\begin{gather*}
M\varphi(x)=\frac{\varphi_\alpha(x)}{\len x}
\end{gather*}

We say that the ergodic means associated to a test function~$\varphi$ \emph{converge \as\ toward~$L=(L_\alpha)_{\alpha\in X}$} if, for every~$\alpha\in X$:
\begin{gather*}
\text{$\nu_\alpha$-\as}\quad\lim_{x\in\M,\ x\to\omega} M\varphi(x)=L_\alpha(\omega)
\end{gather*}
To make the definition of the limit explicit:
\begin{gather}
  \label{eq:59}
  \forall\ve>0\quad\exists x\in\M_\alpha\quad\forall y\in\M_\alpha\quad(x\leqslant y\leqslant\omega\implies|M\varphi(y)-L_\alpha(\omega)|<\ve)
\end{gather}

In particular, for every sequence~$\seq xn$ of traces converging in~$\Mbar$ toward~$\omega$, one has~$M\varphi(x_n)\xrightarrow{n\to\infty}L_\alpha(\omega)$.
Indeed, for a given~$\ve>0$ and~$x\in\M_\alpha$ as in~\eqref{eq:59}, there exists an integer~$N$ such that~$x\leqslant x_n$ for all~$n\geqslant N$.

\begin{theorem}
\label{thr:6}
Assume that the concurrent system is irreducible and equipped with a Markov measure~$\nu$.
\begin{enumerate}
\item\label{item:14} For every test function~$\varphi$, there is a constant~$k_\varphi$ such that the ergodic means~$M\varphi$ converge \as\ toward the constant~$L=(L_\alpha)_{\alpha\in X}$ where~$L_\alpha=k_\varphi$ for all~$\alpha\in X$.
\item\label{item:15} If~$\varphi$ is additive, the limit~$k_\varphi$ can be computed as follows.
Let~$J$ be any final component of\/ \DSCP.
Let~$Q$ be the restriction to~$J$ of the transition matrix of the Markov chain of state-and-cliques, and let~$\pi_J$ be the unique invariant probability distribution of~$Q$.
Then:
\begin{gather}
  \label{eq:69}
  k_\varphi=\Bigl(\sum_{(\alpha,c)\in J}\pi_J(\alpha,c)\,\len c
  \Bigr)^{-1}\sum_{(\alpha,c)\in J}\pi_J(\alpha,c)\varphi_\alpha(c)
\end{gather}
\end{enumerate}
\end{theorem}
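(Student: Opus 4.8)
The plan is to exploit the renewal structure provided by iterated hitting times of a fixed letter, and then to fill in the gaps between consecutive renewal times. Fix a letter $a\in\Sigma$ and an initial state $\alpha$. By Lemma~\ref{lem:5} the iterates $(T^a)^n$ are $\nu_\alpha$-\as\ finite and all moments of $\len{T^a_\alpha}$ are finite; by Lemma~\ref{lem:4} the chain $X_n=\alpha\cdot(T^a)^n$ has a unique closed class $R$, with a unique stationary distribution $\pi_R$ on $R$. Decompose $(T^a)^n=\eta_1\cdots\eta_n$ into hitting-time blocks; the Strong Markov property~\eqref{eq:43} shows that, conditionally on the past, $\eta_{n+1}$ has the law of $T^a_{X_n}$ under $\nu_{X_n}$, so $(\len{\eta_{n+1}},\varphi_{X_n}(\eta_{n+1}))$ is a Markov-modulated reward sequence with finite conditional moments (using $|\varphi_\beta(y)|\leqslant C\len{y}$ when $\varphi$ is additive). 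The strong law for Markov-additive sequences — the $O(1)$ contribution of the finitely many blocks preceding the entry into $R$ being negligible — then gives, $\nu_\alpha$-\as,
\begin{gather*}
\tfrac1n\len{(T^a)^n}\longrightarrow\ell^*:=\sum_{\beta\in R}\pi_R(\beta)\,\esp_\beta\len{T^a_\beta}\geqslant1,\qquad
\tfrac1n\varphi_\alpha\bigl((T^a)^n\bigr)\longrightarrow w^*:=\sum_{\beta\in R}\pi_R(\beta)\,\esp_\beta\bigl[\varphi_\beta(T^a_\beta)\bigr].
\end{gather*}
For a non-negative, non-decreasing, sub-additive $\varphi$ the second limit is replaced by $\varphi_\alpha((T^a)^n)/n\to c$ for a constant $c$, via Kingman's subadditive ergodic theorem (after a standard coupling to the stationary regime of $X_n$). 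In all cases $M\varphi\bigl((T^a)^n(\omega)\bigr)$ converges $\nu_\alpha$-\as\ to a deterministic constant $k_{\varphi,a}$, independent of $\alpha$.

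The core step is to upgrade this to the unrestricted convergence~\eqref{eq:59}. Given $y\in\M_\alpha$ with $y\leqslant\omega$, set $k=\len{y}_a$. Because the $a$-events of $\omega$ are totally ordered, $(T^a)^k(\omega)$ is the down-closure of their first $k$, hence $(T^a)^k(\omega)\leqslant y$; the complementary part $y'=\bigl((T^a)^k(\omega)\bigr)^{-1}y$ lies in $\M^a_{X_k}$ and left-divides $\omega^{(k)}:=\bigl((T^a)^k(\omega)\bigr)^{-1}\omega$, so $\len{y'}\leqslant\len{\xi^{(k)}}$ where $\xi^{(k)}$ is the maximal left-divisor of $\omega^{(k)}$ lying in $\M^a$. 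Now the quantitative form of Corollary~\ref{cor:7} — the bound $\sum_{y\in\M^a_\beta}f_\beta(y)<\infty$, together with the fact that this series has radius $>1$ by Theorem~\ref{thr:1} — yields uniform moment bounds on the length of the maximal $a$-free left-divisor of a $\nu_\beta$-random trajectory; combined with the Strong Markov property (which makes $\omega^{(k)}$ conditionally $\nu_{X_k}$-distributed) and Borel--Cantelli, this forces $\len{\xi^{(k)}}<\sqrt{k}$ for all large $k$, $\nu_\alpha$-\as. Consequently $\len{y}=\len{(T^a)^k(\omega)}+o(k)$, and $\varphi_\alpha(y)$ differs from $\varphi_\alpha\bigl((T^a)^k(\omega)\bigr)$ by at most $C\len{y'}=o(k)$ in both cases (additivity; resp.\ monotonicity and sub-additivity). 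Since $\len{y}_a\to\infty$ as $y\to\omega$ (no trajectory is eventually $a$-free, by Corollary~\ref{cor:7}), combining with the first paragraph gives $M\varphi(y)\to k_{\varphi,a}$ in the sense of~\eqref{eq:59}. This proves part~\ref{item:14} with $L_\alpha\equiv k_{\varphi,a}$; since a filter-limit is unique, $k_{\varphi,a}$ cannot depend on $a$, so $k_\varphi:=k_{\varphi,a}$ is well defined.

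For part~\ref{item:15} I would apply this convergence along the canonical truncations $x_n=C_1(\omega)\cdots C_n(\omega)$, which tend to $\omega$ in $\Mbar$. Under $\nu_\alpha$ the sequence $(\alpha_i,C_{i+1})_{i\geqslant0}$ is the Markov chain of state-and-cliques, which by Corollary~\ref{cor:8} stays inside \DSCP; since the adjacency matrix of \DSCP\ is an umbrella matrix, the chain is eventually absorbed into one of its final components, each of which is an irreducible class, and every final component $J$ is reached with positive probability from a suitable initial state (any $(\beta,c)\in J$ is stable, so $h_\beta(c)>0$, and starting from state $\beta$ the chain sits in the closed class $J$ from time $0$ with positive probability). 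Writing $\len{x_n}=\sum_{i=1}^n\len{C_i}$ and, by additivity, $\varphi_\alpha(x_n)=\sum_{i=1}^n\varphi_{\alpha_{i-1}}(C_i)$, the Markov ergodic theorem on $J$ gives $\len{x_n}/n\to\sum_{(\beta,c)\in J}\pi_J(\beta,c)\len{c}$ and $\varphi_\alpha(x_n)/n\to\sum_{(\beta,c)\in J}\pi_J(\beta,c)\varphi_\beta(c)$, so $M\varphi(x_n)$ converges to the right-hand side of~\eqref{eq:69}; but by part~\ref{item:14} it also converges to $k_\varphi$, which proves~\eqref{eq:69} and shows the value is independent of the chosen final component.

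The delicate point is the overhang estimate $\len{\xi^{(k)}}=o(k)$: one must exclude that a finite prefix $y\leqslant\omega$ advances arbitrarily far in a sub-system avoiding $a$ while $\len{y}_a$ stays bounded. This is precisely where irreducibility is used, through the strict inequality of radii of Theorem~\ref{thr:1}, which turns the qualitative $\nu_\alpha(\BM^a_\alpha)=0$ of Corollary~\ref{cor:7} into summable tails with finite moments. The secondary technicalities are the passage to the stationary regime of $X_n$ (and the verification that Kingman's theorem applies in the sub-additive case), and, in part~\ref{item:15}, that each final component of \DSCP\ is genuinely visited. Note that this route yields constancy of $L$ directly, from the renewal law of large numbers together with uniqueness of the filter-limit, so it does not need to invoke Theorem~\ref{thr:4}.
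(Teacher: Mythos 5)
Your proposal follows essentially the same route as the paper's own proof: convergence of~$M\varphi$ along iterated hitting times~$(T^a)^n$ of a fixed letter~$a$, an overhang estimate forcing~$\len{u_j}\leqslant\sqrt{j}$ eventually (obtained exactly as in the paper's Lemmas~7 and~9, via finite moments plus Borel--Cantelli on top of Theorem~1 and Corollary~7), the decomposition~$y=(T^a)^k(\omega)\,y'$ with~$y'\in\M^a$ to upgrade to the filter-limit statement~\eqref{eq:59}, and for Part~2 the Markov chain of state-and-cliques restricted to a final component of \DSCP\ combined with the already-established constancy.

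The one genuine divergence is your closing claim that Theorem~4 is not needed, the constancy of the limit coming instead from the ergodic theory of the derived chain~$(X_n,\Delta_{n+1})$. For additive~$\varphi$ this is clean: the strong law for Markov-additive functionals of a chain with a unique closed class gives a deterministic constant directly, so in that case you do bypass Theorem~4. For sub-additive~$\varphi$, however, the ``standard coupling to the stationary regime'' does not quite close the argument. Sub-additivity gives~$a_{0,n}\leqslant a_{0,\tau}+a_{\tau,n}$ and hence only~$\limsup_n a_{0,n}/n\leqslant c$; the matching lower bound does not follow from the test-function hypotheses (one would want~$\varphi_\alpha(xy)\geqslant\varphi_{\alpha\cdot x}(y)$, which holds for~$\varphi=\tau(\cdot)$ but is not assumed in general). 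There is also a periodicity issue: exact couplings for Markov chains typically require aperiodicity, which the chain~$X_n$ need not satisfy. The paper sidesteps both by observing that the Kingman limit along~$(T^a)^n$ is a shift-invariant function of~$\omega$ in the sense of Definition~5.1 (it is a Cesàro-type limit, so insensitive to a finite prefix), and then invoking Theorem~4. So you still need Theorem~4 (or an equivalent statement about triviality of the shift-invariant~$\sigma$-algebra of the augmented chain, valid from an arbitrary initial state) to settle the sub-additive case. Apart from this overstatement, which you correctly flagged as the weak point, the proof is sound and aligned with the paper's.
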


We first establish three lemmas.

\begin{lemma}
  \label{lem:6}
  Let~$\varphi$ be a test function.
For any letter~$a\in\Sigma$, there is a constant~$\ell\in\RR$ such that, for all~$\alpha\in X$, the sequence~$\bigl(M\varphi((T^a)^n)\bigr)_{n\geqslant0}$ converges~$\nu_\alpha$-\as\ toward~$\ell$.
\end{lemma}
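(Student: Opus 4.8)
The plan is to reduce the statement to classical limit theorems for \iid\ sequences, via a regeneration argument along the iterated hitting times of $a$.

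\emph{Regeneration.}\quad Keep $a$ fixed and recall from Lemma~\ref{lem:4} and Corollary~\ref{cor:6} the Markov chain $X_n=\alpha\cdot(T^a)^n$ on $X$, which has a single closed class, together with a state $\alpha_0$ in it that is, $\nu_\alpha$-\as, visited infinitely often whatever the initial state $\alpha$. Let $\sigma_0<\sigma_1<\cdots$ be the successive times $n$ with $X_n=\alpha_0$, put $R=(T^a)^{\sigma_0}$ and, for $j\geqslant1$, $W_j=\bigl((T^a)^{\sigma_{j-1}}\bigr)^{-1}(T^a)^{\sigma_j}$, the trace accumulated between two consecutive visits of $(X_n)$ to $\alpha_0$. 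By the Strong Markov property~\eqref{eq:43}, the $(W_j)_{j\geqslant1}$ are \iid, valued in $\M_{\alpha_0,\alpha_0}$ and independent of $R$; moreover $R$ and $W_1$ have integrable length, since $\sigma_0$ and $\sigma_1-\sigma_0$ are a hitting time and a return time of the finite chain $(X_n)$ hence integrable, while a single increment $\bigl((T^a)^k\bigr)^{-1}(T^a)^{k+1}$ has conditionally bounded expected length (Lemma~\ref{lem:5} and Wald's identity). Finally $\len{W_1}\geqslant1$, so $\bar m:=\esp[\len{W_1}]\in(0,\infty)$.

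\emph{Starting from $\alpha_0$.}\quad Here $\sigma_0=0$, $R=\ve$ and $(T^a)^{\sigma_m}=W_1\cdots W_m$. By the strong law of large numbers, $\len{W_1\cdots W_m}/m\to\bar m$. If $\varphi$ is additive, $\varphi_{\alpha_0}(W_1\cdots W_m)=\sum_{j=1}^m\varphi_{\alpha_0}(W_j)$ is a sum of \iid\ integrable terms (as $|\varphi_\beta(x)|\leqslant C\len x$ with $C=\max_{\beta,b}|\varphi_\beta(b)|$), so it is equivalent to $m\,\esp[\varphi_{\alpha_0}(W_1)]$. If $\varphi$ is non-negative, non-decreasing and sub-additive, then, using $\alpha_0\cdot W_i=\alpha_0$, the array $Z_{i,j}=\varphi_{\alpha_0}(W_{i+1}\cdots W_j)$ is stationary and sub-additive with integrable increments ($0\leqslant Z_{0,1}\leqslant C\len{W_1}$), and Kingman's sub-additive ergodic theorem gives $\varphi_{\alpha_0}(W_1\cdots W_m)/m\to\lambda^\ast$ \as, a constant by the \iid\ structure. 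Either way $\varphi_{\alpha_0}(W_1\cdots W_m)/m\to\lambda$, so $M\varphi\bigl((T^a)^{\sigma_m}\bigr)\to\ell:=\lambda/\bar m$. For $\sigma_m\leqslant n<\sigma_{m+1}$ one has $(T^a)^{\sigma_m}\leqslant(T^a)^n\leqslant(T^a)^{\sigma_{m+1}}$, hence in the additive case $M\varphi((T^a)^n)$ lies between the two corresponding ratios, and in the sub-additive case non-decreasingness gives $\varphi_{\alpha_0}(W_1\cdots W_m)/\len{W_1\cdots W_{m+1}}\leqslant M\varphi((T^a)^n)\leqslant\varphi_{\alpha_0}(W_1\cdots W_{m+1})/\len{W_1\cdots W_m}$; since $m(n)\to\infty$ (as $\sigma_m\geqslant m$), both bounds tend to $\ell$, so $M\varphi((T^a)^n)\to\ell$, $\nu_{\alpha_0}$-\as.

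\emph{Arbitrary initial state.}\quad Fix $\alpha$. The composition of stopping times~\eqref{eq:41}--\eqref{eq:42} gives $(T^a)^n(\omega)=R(\omega)\cdot(T^a)^{n-\sigma_0(\omega)}(\eta)$ for $n\geqslant\sigma_0(\omega)$, where $\eta=R(\omega)^{-1}\omega\in\BM_{\alpha_0}$ and $R(\omega)$ is \as\ finite (Corollary~\ref{cor:6}); conditionally on the past, $\eta$ has law $\nu_{\alpha_0}$, so the previous step applies to $\eta$ for $\nu_\alpha$-\as\ every $\omega$. If $\varphi$ is additive, $\varphi_\alpha\bigl((T^a)^n(\omega)\bigr)=\varphi_\alpha(R(\omega))+\varphi_{\alpha_0}\bigl((T^a)^{n-\sigma_0}(\eta)\bigr)$, and likewise for lengths, so $M\varphi((T^a)^n(\omega))\to\ell$ after dividing and letting $n\to\infty$. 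If $\varphi$ is sub-additive, the same decomposition and $\varphi_\alpha(R\xi)\leqslant\varphi_\alpha(R)+\varphi_{\alpha_0}(\xi)$ give $\limsup_n M\varphi((T^a)^n(\omega))\leqslant\ell$; for the reverse inequality, fix once and for all a trace $y_\ast\in\M_{\alpha_0}$ with $\alpha_0\cdot y_\ast=\alpha$, apply the case of $\alpha_0$ to the trajectory $y_\ast\omega$ (whose law under $\nu_\alpha$ is absolutely continuous with respect to $\nu_{\alpha_0}$), and observe that $(T^a)^n(y_\ast\omega)=y_\ast\cdot(T^a)^{n-\len{y_\ast}_a}(\omega)$ for every $n$ with $y_\ast\leqslant(T^a)^n(y_\ast\omega)$, which holds for all large $n$ by the argument proving~\eqref{eq:45}; then $\varphi_{\alpha_0}\bigl((T^a)^n(y_\ast\omega)\bigr)\leqslant\varphi_{\alpha_0}(y_\ast)+\varphi_\alpha\bigl((T^a)^{n-\len{y_\ast}_a}(\omega)\bigr)$, and since the left-hand side divided by $\len{(T^a)^n(y_\ast\omega)}$ tends to $\ell$, this forces $\liminf_n M\varphi((T^a)^n(\omega))\geqslant\ell$. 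The constant $\ell$ is the same for every $\alpha$, which is the assertion.

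\emph{Expected main difficulty.}\quad The non-routine points are, first, producing the \iid\ regeneration in usable form (integrability of $R$ and $W_1$, exact \iid-ness of the $W_j$) so that Kingman's theorem can be invoked, and second, the sub-additive case with an arbitrary initial state: because the divisibility order of a trace monoid has incomparable elements, one cannot sandwich the truncations of $\omega$ between those of $\eta$, so the lower bound must be obtained by the ``steering'' device above — prepending a fixed trace $y_\ast$ so as to use sub-additivity in the only direction it is available. Filling the gaps between regeneration times is then elementary.
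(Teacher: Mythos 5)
Your proof is correct but follows a genuinely different route from the paper. The paper's proof is much shorter: it decomposes $(T^a)^n=\Delta_1\cdots\Delta_n$ with $\Delta_i=T^a\circ\theta_{(T^a)^{i-1}}$, observes that $(X_n,\Delta_{n+1})$ is a Markov chain by Lemma~\ref{lem:4}, invokes Kingman's ergodic theorem for the two factors of $M\varphi((T^a)^n)=\bigl(\varphi((T^a)^n)/n\bigr)\big/\bigl((\len{\Delta_1}+\cdots+\len{\Delta_n})/n\bigr)$, and then deduces that the limit is constant by asserting it is shift-invariant (``Cesaro-like limit'') and applying Theorem~\ref{thr:4}. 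Your regeneration approach, by contrast, decomposes along successive visits of $(X_n)$ to a fixed recurrent state $\alpha_0$, obtains genuinely \iid\ excursion traces $W_j$, and then needs only the classical SLLN and \iid\ Kingman theorem; constancy of the limit is then automatic from the Kolmogorov $0$--$1$ law rather than from the ergodicity result Theorem~\ref{thr:4}. The price you pay is the final reduction from an arbitrary initial state $\alpha$ to $\alpha_0$, which in the sub-additive case requires your ``prepend a fixed $y_\ast$'' device for the $\liminf$ bound — and that is precisely the delicate point that the paper elides in its ``Cesaro-like limit'' remark (the sub-additive inequality only gives one direction of the shift-invariance needed for Theorem~\ref{thr:4}, and recovering the other direction takes a supermartingale or a steering argument). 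In short: the paper buys brevity by leaning on Theorem~\ref{thr:4} and on the Markov-chain form of Kingman's theorem, while you buy self-containedness and elementarity at the cost of a longer reduction; your version is arguably the more complete one on the shift-invariance point, and it makes explicit an argument (that $x\leqslant(T^a)^n(\omega)$ forces $(T^a)^n(\omega)=x\cdot(T^a)^{n-\len x_a}(x^{-1}\omega)$) that is only implicit in the paper's proof of \eqref{eq:45}.

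One small remark on the regeneration step: for the $W_j$ to be \iid\ one must apply the Strong Markov property \eqref{eq:43} at the stopping time $(T^a)^{\sigma_{j-1}}$, not merely use the Markov property of the finite chain $(X_n)$; you do invoke \eqref{eq:43}, which is right, since $\sigma_{j-1}$ is an $(\F^n)$-measurable index and hence $(T^a)^{\sigma_{j-1}}$ is again a stopping time in the sense of the paper. Likewise your integrability bound on $\len{W_1}$ is best obtained by conditioning on the finite chain rather than by Wald's identity strictly speaking, since the increments $\Delta_i$ are Markov and not \iid; but as $\esp_\beta\len{T^a_\beta}$ is uniformly bounded over the finite set of states $\beta$ and $\esp_{\alpha_0}[\sigma_1]<\infty$, the estimate $\esp[\len{W_1}]\leqslant\bigl(\max_\beta\esp_\beta\len{T^a_\beta}\bigr)\esp_{\alpha_0}[\sigma_1]$ goes through and the conclusion stands.
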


\begin{proof}
By construction,~$(T^a)^n$ can be written as a concatenation of traces:
$(T^a)^n=\Delta_1\cdots\Delta_n$ with~$\Delta_i=T^a(\theta_{(T^a)^{i-1}})$, and thus:
\begin{gather}
\label{eq:47}
M\varphi((T^a)^n)=
\left.\Bigl(\frac{\varphi((T^a)^n)}{n}\Bigr)\middle/
\Bigl(\frac {\len{\Delta_1}+\dots+\len{\Delta_n}}n\Bigr)\right.
\end{gather}
Since~$X_n=\alpha\cdot(T^a)^n$ is a Markov chain according to Lemma~\ref{lem:4}, so is~$(X_n,\Delta_{n+1})$.
Therefore both factors in the right hand member of~\eqref{eq:47} converge, according to Kingman ergodic theorem in case where~$\varphi$ is sub-additive.
But the limit is shift-invariant: indeed it a Cesaro-like limit; hence it is constant thanks to Theorem~\ref{thr:4}.
\end{proof}

\begin{lemma}
\label{lem:7}
For~$\omega\in\BM_\alpha$, let~$U_\alpha^a(\omega)=\M_\alpha^a\wedge\omega$ be the largest left divisor of~$\omega$ with no occurrence of~$a$:
\begin{gather*}
  \M_\alpha^a\wedge\omega=\bigvee\{x\in\M_\alpha^a\tq x\leqslant\omega\}
\end{gather*}
Then\/~$\esp_\alpha(\len {U_\alpha^a}^k)<\infty$ for every~$\alpha\in X$ and for every integer~$k\geqslant0$.
\end{lemma}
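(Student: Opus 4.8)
The plan is to mimic the moment estimate carried out for the hitting time $T^a$ in Lemma~\ref{lem:5}. The mechanism is that, for a fixed $a$-free trace $y\in\M_\alpha^a$, the event $\{U_\alpha^a(\omega)=y\}$ is contained in the visual cylinder $\up y$, whose $\nu_\alpha$-probability equals $f_\alpha(y)$, where $f=(f_\alpha)_{\alpha\in X}$ is the probabilistic valuation of $\nu$; and the series $\sum_{y\in\M_\alpha^a}\len y^k f_\alpha(y)$ converges thanks to the spectral gap of Theorem~\ref{thr:1}.

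First I would observe that $U_\alpha^a$ is $\nu_\alpha$-\as\ finite. For $n\geqslant1$, set $A_n=\bigcup_{y\in\M_\alpha^a,\ \len y=n}\up y$. A length-$n$ left divisor of an $a$-free left divisor of $\omega$ is again an $a$-free left divisor of $\omega$, so the sequence $(A_n)_{n\geqslant1}$ is non-increasing; moreover, if $U_\alpha^a(\omega)$ is infinite then $\omega$ admits $a$-free left divisors of every length, so $\{U_\alpha^a\text{ infinite}\}\subseteq\bigcap_{n\geqslant1}A_n$. Now $\nu_\alpha(A_n)\leqslant\sum_{y\in\M_\alpha^a,\ \len y=n}\nu_\alpha(\up y)=\sum_{y\in\M_\alpha^a,\ \len y=n}f_\alpha(y)$, and this tends to $0$ because $\sum_{y\in\M_\alpha^a}f_\alpha(y)<\infty$: indeed, by Corollary~\ref{cor:1} the series $\sum_{y\in\M_\alpha}s^{\len y}f_\alpha(y)$ has radius of convergence $1$, hence by Theorem~\ref{thr:1} the sub-series $\sum_{y\in\M_\alpha^a}s^{\len y}f_\alpha(y)$ has radius of convergence $>1$ and in particular converges at $s=1$ (this is already \eqref{eq:49}). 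By continuity from above, $\nu_\alpha\bigl(\bigcap_nA_n\bigr)=0$. Finally, when it is finite, $U_\alpha^a(\omega)$ lies in $\M_\alpha^a$: the least upper bound in $\Mbar$ of a bounded family of $a$-free generalized traces is $a$-free, as one sees from the fact that the letter-erasing retraction $\M\to\M^a$ is monotone for left-divisibility, which, together with a comparison of lengths, forces a finite such supremum to coincide with its own image.

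Then, partitioning the \as\ event $\{U_\alpha^a\text{ finite}\}$ according to the value $y=U_\alpha^a(\omega)\in\M_\alpha^a$, I would write, for every integer $k\geqslant0$,
\begin{gather*}
\esp_\alpha\bigl[\len{U_\alpha^a}^k\bigr]=\sum_{y\in\M_\alpha^a}\len y^k\,\nu_\alpha\bigl(U_\alpha^a=y\bigr)\leqslant\sum_{y\in\M_\alpha^a}\len y^k\,\nu_\alpha(\up y)=\sum_{y\in\M_\alpha^a}\len y^k\,f_\alpha(y),
\end{gather*}
using $\{U_\alpha^a=y\}\subseteq\up y$ and $\nu_\alpha(\up y)=f_\alpha(y)$. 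The right-hand series is finite by the same radius-of-convergence argument: multiplying the coefficients of a power series by the polynomial $\len y^k$ does not change its radius of convergence, which here exceeds $1$, so the series converges at $s=1$. This gives finiteness of all moments of $\len{U_\alpha^a}$, and the computation closely parallels \eqref{eq:68} in the proof of Lemma~\ref{lem:5}.

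The only step that is not a direct replay of Lemma~\ref{lem:5} is the \as\ finiteness of $U_\alpha^a$ together with the (easy but worth stating) fact that a finite value of $U_\alpha^a$ is an $a$-free trace; I expect the short verification that the supremum of $a$-free left divisors is itself $a$-free to be the one point deserving care, the rest being routine.
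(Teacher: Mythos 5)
Your proof is correct, but it takes a different route from the paper's. You decompose the expectation directly according to the value of~$U^a_\alpha$, via the inclusion~$\{U^a_\alpha=y\}\subseteq\;\up y$, while the paper instead conditions on the value of the hitting time~$T^a_\alpha$ (which is already known to be~\as\ finite with all moments controlled, by Lemma~\ref{lem:5}) and then, on each event~$\{T^a_\alpha=x\}=\;\up x$, dominates~$\len{U^a_\alpha}$ by~$\len{xy}$ for a suitable~$a$-free~$y$; this yields the looser-looking bound~$\sum_{x\in\T^a_\alpha}\nu_\alpha(\up x)\sum_{y\in\M^a_{\alpha\cdot x}}\len{xy}^k f_{\alpha\cdot x}(y)$, to which the elementary inequality~$\len{xy}^k\leqslant 2^k(\len x^k+\len y^k)$ is applied. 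What the paper's route buys is that it avoids having to argue directly that~$U^a_\alpha$ is \as\ finite and lands in~$\M_\alpha^a$; that is folded into the already-established stopping-time machinery. What your route buys is a cleaner final estimate, namely the single series~$\sum_{y\in\M_\alpha^a}\len y^k f_\alpha(y)$, at the price of the two preliminary verifications you flag: the Borel--Cantelli/continuity-from-above argument giving~$\nu_\alpha(U^a_\alpha\text{ infinite})=0$ (this is genuinely needed and not an immediate consequence of Corollary~\ref{cor:7}: a trajectory may have occurrences of~$a$ and still have arbitrarily long~$a$-free left divisors), and the fact that a finite~$U^a_\alpha(\omega)$ is itself an~$a$-free trace. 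For the latter, your monotone-retraction-plus-length argument works: writing~$\pi:\M\to\M^a$ for the letter-erasing morphism, every~$x\in\M^a_\alpha$ with~$x\leqslant U^a_\alpha(\omega)$ satisfies~$x=\pi(x)\leqslant\pi\bigl(U^a_\alpha(\omega)\bigr)$, so~$\pi\bigl(U^a_\alpha(\omega)\bigr)$ is an upper bound, and combining~$U^a_\alpha(\omega)\leqslant\pi\bigl(U^a_\alpha(\omega)\bigr)$ with~$\len{\pi(\cdot)}\leqslant\len{\cdot}$ forces equality. (Alternatively and perhaps more simply: the greatest~$a$-free left divisor of the finite trace~$U^a_\alpha(\omega)$ is also an upper bound of the family, hence equals~$U^a_\alpha(\omega)$.) Both proofs ultimately rest on exactly the same key input, the spectral gap from Theorem~\ref{thr:1} combined with Corollary~\ref{cor:1}, which makes~$\sum_{y\in\M_\alpha^a}\len y^k f_\alpha(y)$ converge.
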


\begin{proof}
Let~$f_\alpha(x)=\nu_\alpha(\up x)$ be the probabilistic valuation associated to the Markov measure~$\nu$.
For~$\alpha\in X$, consider the generating series
\begin{gather*}
S_\alpha(t)=\sum_{y\in\M_\alpha^a}t^{|y|}f_\alpha(y)
\end{gather*}
where the sum is taken over traces~$y$ that do not have any occurrences of~$a$.
It follows from Theorem~\ref{thr:1} and Corollary~\ref{cor:1} that the radius of convergence of~$S_\alpha(t)$ is greater than~$1$.
Henceforth, for all~$\alpha\in X$ and all~$k > 0$:
\begin{gather}
\label{eq:48}
\sum_{y\in\M_\alpha^a}|y|^k f_\alpha(y)<\infty.
\end{gather}

Now let~$\T_\alpha^a$ denote the set of finite values taken by the first hitting time of~$a$, starting from~$\alpha$.
On the one hand,~$\len{T_\alpha^a}<\infty$~$\nu_\alpha$-\as\ according to Lemma~\ref{lem:5}; on the other hand, for every~$x\in\T_\alpha^a$, the stopping time property of~$T^a_\alpha$ implies~$\{T^a_\alpha=x\}=\up x$.
Therefore, decomposing according to the values of~$T_\alpha^a$ yields:
\begin{align*}
\esp_\alpha\bigl(\len {U_\alpha^a}^k\bigr)&=\sum_{x\in\T_\alpha^a}\esp_\alpha\bigl(\mathbf{1}_{\up\, x}\,\len {U_\alpha^a}^k\bigr)
\leqslant\sum_{x\in\T_\alpha^a}\nu_\alpha(\up x)\sum_{y\in\M^a_{\alpha\cdot x}}\len{xy}^kf_{\alpha\cdot x}(y)
\end{align*}
Next:~$\len{xy}^k=(\len x+\len y)^k\leqslant 2^k(\len x^k+\len y^k)$.
Hence, if~$M_k$ is a common bound of the series in~\eqref{eq:48} for~$\alpha$ ranging over~$X$, one has:
\begin{gather*}
\esp_\alpha\bigl(\len {U_\alpha^a}^k\bigr)\leqslant 2^k \sum_{x\in\T^a_\alpha}\nu_\alpha(\up x)\bigl(M_0 \len x^k+M_k\bigr)
\leqslant 2^k(M_k+ M_0 \esp_\alpha(\len{T_\alpha^a}^k)\bigr)<\infty
\end{gather*}
using Lemma~\ref{lem:5}, which completes the proof.
\end{proof}

Before proceeding with the proof of Theorem~\ref{thr:6}, we need one more lemma.
For this, assume given a letter~$a\in\Sigma$, and let~$\omega\in\BM_\alpha$ be an infinite trajectory such that~$(T_\alpha^a)^n(\omega)$ are finite for all~$n\geqslant0$.
Consider the following sequences:
\begin{align}
  \label{eq:67}
x_j&=(T_\alpha^a)^j(\omega)  &\xi_j&=(x_j)^{-1}\omega& \alpha_j&=\alpha\cdot x_j&u_j=U^a_{\alpha_j}(\xi_j)
\end{align}
Seen as random variables, it follows from Lemma~\ref{lem:5} that  all the sequences in~\eqref{eq:67} are~$\nu_\alpha$-\as\ well defined.

\begin{lemma}
\label{lem:9}
  For~$\nu_\alpha$-\as\ every~$\omega\in\BM_\alpha$, there exists an integer~$J$ such that:
  \begin{gather}
    \label{eq:76}
    \forall j\geqslant0\quad\bigl( j\geqslant J\implies |u_j|\leqslant\sqrt \jmath\,\bigr)
  \end{gather}
\end{lemma}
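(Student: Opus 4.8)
The plan is to deduce the estimate from the moment bounds of Lemma~\ref{lem:7} together with the Strong Markov property and a Borel--Cantelli argument. First I would fix the letter $a\in\Sigma$ and set, for each integer $k\geqslant0$,
\begin{gather*}
M_k=\max_{\beta\in X}\esp_\beta\bigl(\len{U_\beta^a}^k\bigr),
\end{gather*}
which is finite by Lemma~\ref{lem:7} since the maximum runs over the finite set $X$. I would then observe that, in the notation of~\eqref{eq:67}, the random trace $u_j$ is exactly $U_{\alpha_j}^a$ evaluated along the shifted trajectory: putting $Z_\beta(\zeta)=\len{U_\beta^a(\zeta)}^k$ for $\beta\in X$, one has $\len{u_j}^k=Z_{\alpha\cdot(T^a)^j}\circ(T^a)^j$.

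Next I would apply the Strong Markov property~\eqref{eq:43} to the stopping time $(T^a)^j$, which is $\nu_\alpha$-\as\ finite by Lemma~\ref{lem:5}, and to the non-negative family $Z=(Z_\beta)_{\beta\in X}$ above. Denoting by $\F^j_\alpha$ the \slgb\ associated with $(T^a)^j$, this gives
\begin{gather*}
\esp_\alpha\bigl(\len{u_j}^k\,\big|\,\F^j_\alpha\bigr)=\esp_{\alpha_j}\bigl(\len{U_{\alpha_j}^a}^k\bigr)\leqslant M_k,
\end{gather*}
the inequality because $\alpha_j\in X$; taking expectations yields $\esp_\alpha\bigl(\len{u_j}^k\bigr)\leqslant M_k$ for every $j\geqslant0$. (One may reach the same bound more directly by decomposing $\BM_\alpha$ into the disjoint visual cylinders $\up x$, with $x$ ranging over the finite values of $(T^a)^j$, and applying the chain rule~\eqref{eq:10} state by state.)

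Finally I would take $k=4$ and use Markov's inequality: for every $j\geqslant1$,
\begin{gather*}
\nu_\alpha\bigl(\len{u_j}>\sqrt j\,\bigr)=\nu_\alpha\bigl(\len{u_j}^4>j^2\bigr)\leqslant\frac{\esp_\alpha\bigl(\len{u_j}^4\bigr)}{j^2}\leqslant\frac{M_4}{j^2}.
\end{gather*}
Since $\sum_{j\geqslant1}M_4/j^2<\infty$, the Borel--Cantelli lemma gives $\nu_\alpha\bigl(\len{u_j}>\sqrt j\text{ for infinitely many }j\bigr)=0$, so that for $\nu_\alpha$-\as\ every $\omega$ there is an integer $J\geqslant1$ with $\len{u_j}\leqslant\sqrt j$ for all $j\geqslant J$, which is precisely~\eqref{eq:76}. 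The only genuinely delicate point is securing the uniform-in-$j$ control of $\esp_\alpha(\len{u_j}^k)$: this rests on Lemma~\ref{lem:7} supplying finite moments of the ``letter-$a$-free prefix'' $U_\beta^a$ at each of the finitely many states, and on the Strong Markov property transporting that control across the iterated hitting times; everything else is a routine application of Markov's inequality and Borel--Cantelli.
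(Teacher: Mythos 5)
Your proposal is correct and follows essentially the same route as the paper: identify $u_j$ as $U_{\alpha_j}^a$ evaluated along the shifted trajectory $\xi_j=\theta_{(T^a)^j}(\omega)$, transport the moment bound of Lemma~\ref{lem:7} across the iterated hitting times via the Strong Markov property~\eqref{eq:43}, and conclude with Markov's inequality and Borel--Cantelli. The only (immaterial) difference is that you use the fourth moment, giving a $\sum j^{-2}$ tail, whereas the paper uses the third moment and the summable tail $\sum j^{-3/2}$.
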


\begin{proof}
The random variable~$\xi_j$ can be written as~$\xi_j(\omega)=\theta_{(T^a_\alpha)^j(\omega)}(\omega)$.
It follows thus from the Strong Markov property~\eqref{eq:43} on the one hand, and from Lemma~\ref{lem:9} on the other hand, that~$\esp_\alpha(\len {u_j}^3)\leqslant M$ for some constant~$M$, for all~$\alpha\in X$ and for all~$j\geqslant0$.
Hence, by Markov inequality:
\begin{align*}
  \sum_{j>0}\nu_\alpha\bigl(|u_j|>\sqrt \jmath\bigr)\leqslant M\sum_{j>0}\frac1{j^{3/2}}<\infty
\end{align*}
which proves~\eqref{eq:76} \emph{via} the Borel-Cantelli lemma.
\end{proof}

\begin{proof}[Proof of Theorem~\ref{thr:6}.]
\emph{Point~\ref{item:14}.}\quad  We only consider the case where~$\varphi$ is non-decreasing and sub-additive, since the case where~$\varphi$ is additive is more straightforward.
Pick an arbitrary letter~$a\in\Sigma$ and let~$\T^a$ be the set of traces of the form~$T^a(\omega)$ for some infinite trajectory~$\omega$.
Let~$\ell$ be the limit of the ergodic means~$M\varphi((T^a_\alpha)^n)$, which exists according to Lemma~\ref{lem:6}.
Let also~$\Omega_\alpha$ be a subset of~$\BM_\alpha$ with~$\nu_\alpha(\Omega_\alpha)=1$ and for which~\eqref{eq:76} holds for all~$\omega\in\Omega$.

Let~$\omega\in\Omega_\alpha$.
We prove that~$\lim_{x\in\M_\alpha,\ x\to\omega}M\varphi(x)=\ell$.
Consider the sequences~$\seq xj$,~$\seq\xi j$,~$\seq\alpha j$ and~$\seq uj$ introduced in~\eqref{eq:67}.
Let~$L$ be a constant such that:
\begin{align}
  \label{eq:77}
  \forall (\beta,b)\in X\times\Sigma\quad\varphi_\beta(b)\leqslant L
\end{align}
The subadditivity of~$\varphi$ implies in particular:
\begin{gather}
\label{eq:58}
  \forall \beta\in X\quad\forall x\in\M_\beta\quad \varphi_\beta(x)\leqslant L\len x
\end{gather}

Fix~$\varepsilon>0$ and let~$N$ be an integer such that, for all~$n\geqslant N$:
\begin{align}
  \label{eq:78}
  |M\varphi(x_n)-\ell|&<\frac\ve2&
                             \len {u_n}&\leqslant\sqrt n&
                                                     \frac L{\sqrt N}&<\frac\ve4
\end{align}

Let~$y\in\M_\alpha$ be such that~$x_N\leqslant y\leqslant\omega$.
We prove that~$|M\varphi(y)-\ell|<\ve$.

Let~$j$ be the number of occurrences of~$a$ in~$y$.
Then~$y=x_jz$ where~$z\in\M^a$, and~$j\geqslant N$ since~$x_N$ has~$N$ occurrences of~$a$ and~$x_N\leqslant y$.
Furthermore,~$z\leqslant u_j$ by the definition of~$u_j$.
\begin{align*}
M\varphi(y)-M\varphi(x_j)
=
  \frac{\varphi_\alpha(x_jz)}{\len{x_j}+\len z}-\frac{\varphi_\alpha(x_j)}{\len{x_j}}=
  \underbrace{\frac{\varphi_\alpha(x_jz)-\varphi(x_j)}{\len{x_j}+\len z}}_A-
  \underbrace{\frac{\len z}{\len{x_j}+\len z}\cdot\frac{\varphi_\alpha(x_j)}{\len{x_j}}}_B
\end{align*}

For controlling the term~$A$, we use the fact that~$\varphi$ is sub-additive and non-decreasing, then~$z\leqslant u_j$ then~\eqref{eq:58} and~$\len{u_j}\leqslant\sqrt \jmath$ from~\eqref{eq:78}, and finally~$\len{x_j}\geqslant j$ since~$x_j$ has at least its~$j$ occurrences of~$a$ to get:
\begin{align*}
  0\leqslant A&\leqslant\frac{\varphi_{\alpha\cdot x_j}(z)}{\len {x_j}}
\leqslant\frac{\varphi_{\alpha\cdot x_j}(u_j)}{\len{x_j}}
\leqslant L\frac {\sqrt \jmath}{\len{x_j}}\leqslant L\frac1{\sqrt \jmath}<\frac\ve4
\end{align*}

For controlling the term~$B$, we use the bound~$L$ for~$M\varphi(x_j)$ from~\eqref{eq:77},~$\len z\leqslant\len{u_j}\leqslant\sqrt \jmath$ and~$\len{x_j}\geqslant j$ to get~$0\leqslant -B\leqslant L/\sqrt \jmath<\ve/4$.

Combined with the previous estimate for~$A$, we obtain~$|M\varphi(y)-M\varphi(x_j)|<\ve/2$, yielding finally~$|\M\varphi(y)-\ell|<\ve$, which was to be proved.

\emph{Point~\ref{item:15}.}\quad Assume that~$\varphi$ is additive.
Let~$(\alpha_i,C_{i+1})_{i\geqslant0}$ be the Markov chain of state-and-cliques.
We compute the ergodic means~$M\varphi$ using the exhaustive sequence~$Y_n=C_1\cdots C_n$, yielding:
\begin{gather}
  \label{eq:70}
  M\varphi(Y_n)=\frac n{\len {C_1}+\dots+\len{C_n}}\cdot\frac{\varphi_{\alpha_0}(C_1)+\ldots+\varphi_{\alpha_{n-1}}(C_n)}{n}
\end{gather}

Let~$J$ be a basic component of \DSCP.
Let~$(\beta_0,d_0)$ be a state-and-clique of~$J$, and let~$x_0$ be a protection of~$(\beta_0,d_0)$.
All infinite trajectories~$\omega\in\up {x_0}$ have their cliques in~$J$ since~$J$ is final in \DSCP\ and since~$C_1(\omega)=d_0$.
On~$\up{x_0}$, the ergodic means~\eqref{eq:70} converge~$\nu_{\beta_0}$-\as\ toward the constant~$k_\varphi$ defined in~\eqref{eq:69}; but the~$\nu_\alpha$-\as\ limit of~$M\varphi$ is independent of~$\alpha$ according to the point~\ref{item:14} already proved, whence the result.
\end{proof}

\subsection{Computing the speedup}
\label{sec:aver-conc-rate}

As a non-negative, non-decreasing and sub-additive function on traces, the height function is a test function.
Hence, Theorem~\ref{thr:6} allows to introduce the following definition.

\begin{definition}
  Let\/~$\SS=(\M,X)$ be an irreducible concurrent system equipped with its uniform measure~$\nu$.
  The \emph{speedup} of~$\SS$ is the limit:
  \begin{gather}
    s=\lim \frac{\len x}{\height x}
  \end{gather}
as~$x\to\omega\in\BM_\alpha$, which is a constant independent of~$\alpha\in X$ and~$\nu_\alpha$-\as\ independent of~$\omega$.
\end{definition}

The speedup is an average rate of parallelism of concurrent executions; the highest the speedup, the more there is concurrency.
An effective way of computing it is as follows.

\begin{proposition}
  \label{prop:2}
  Let~$\SS=(\M,X)$ be an irreducible concurrent system equipped with a Markov measure.
Let~$J$ be a final component of\/ \DSCP.
Let~$Q$ be transition matrix of the Markov chain of state-and-cliques, restricted to~$J$, and let~$\pi_J$ be the unique invariant probability distribution of~$Q$.
Then the speedup is:
  \begin{gather}
    \label{eq:60}
s=    \sum_{(\alpha,d)\in J}\len d\,\pi_J\bigl((\alpha,d)\bigr)
  \end{gather}
and this quantity is independent of~$J$.
\end{proposition}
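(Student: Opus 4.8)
The plan is to specialize the computation of the ergodic mean $k_\varphi$ furnished by Theorem~\ref{thr:6}, point~\ref{item:15}, to the additive test function $\varphi_\alpha(x)=\len x$, and then relate $s$ to the reciprocal. First, I would observe that the length function $\varphi_\alpha(x)=\len x$ is clearly additive: $\len{xy}=\len x+\len y$. Applying Theorem~\ref{thr:6}\ref{item:15} to this $\varphi$, with $J$ any final component of $\DSCP$, $Q$ the restriction of the transition matrix of the Markov chain of state-and-cliques to $J$, and $\pi_J$ its unique invariant probability distribution (which exists and is unique because $J$, being a final component of $\DSCP$, is a closed communicating class of the chain by Corollary~\ref{cor:8} and Perron--Frobenius theory), we obtain
\begin{gather*}
k_\varphi=\Bigl(\sum_{(\alpha,c)\in J}\pi_J(\alpha,c)\,\len c\Bigr)^{-1}\sum_{(\alpha,c)\in J}\pi_J(\alpha,c)\,\len c=1.
\end{gather*}
This merely says that the ergodic mean $M\varphi(x)=\len x/\len x$ is identically $1$, which is a consistency check rather than the content we want.

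The substantive step is to run Theorem~\ref{thr:6} instead with the test function $\psi_\alpha(x)=\height x$, which the excerpt explicitly notes is a non-negative, non-decreasing, sub-additive test function but is \emph{not} additive; so point~\ref{item:15} does not apply directly and we cannot simply quote~\eqref{eq:69}. Instead I would argue as follows. By Theorem~\ref{thr:6}\ref{item:14}, the ergodic means $M\psi(x)=\height x/\len x$ converge $\nu_\alpha$-a.s., for every $\alpha$, toward a constant $k_\psi$. On the other hand, using the exhaustive sequence $Y_n=C_1\cdots C_n$ of a random infinite trajectory, one has $\height{Y_n}=n$ exactly (since $(C_1,\dots,C_n)$ is the Cartier--Foata normal form of $Y_n$, each $C_i$ being a non-empty clique), so
\begin{gather*}
M\psi(Y_n)=\frac n{\len{C_1}+\dots+\len{C_n}}.
\end{gather*}
As in the proof of Theorem~\ref{thr:6}\ref{item:15}, restricting to a visual cylinder $\up{x_0}$ where $x_0$ is a protection of some state-and-clique $(\beta_0,d_0)\in J$, the chain $(\alpha_i,C_{i+1})_{i\geqslant0}$ stays in the irreducible finite class $J$ from time $0$ on, and the ergodic theorem for Markov chains gives $\frac1n\sum_{i=1}^n\len{C_i}\to\sum_{(\alpha,d)\in J}\len d\,\pi_J(\alpha,d)$ $\nu_{\beta_0}$-a.s.; hence $M\psi(Y_n)\to\bigl(\sum_{(\alpha,d)\in J}\len d\,\pi_J(\alpha,d)\bigr)^{-1}$ on $\up{x_0}$. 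Since this limit along the exhaustive subsequence $Y_n$ must agree with the a.s.\ limit $k_\psi$ of the full net $M\psi(x)$, $x\to\omega$ (by the remark following the definition of a.s.\ convergence of ergodic means, any sequence of traces converging to $\omega$ has $M\psi$-limit $L_\alpha(\omega)$), we conclude $k_\psi=\bigl(\sum_{(\alpha,d)\in J}\len d\,\pi_J(\alpha,d)\bigr)^{-1}$.

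Finally, the speedup is by definition $s=\lim \len x/\height x = 1/k_\psi$ (the limit of reciprocals of $M\psi(x)$, valid since $k_\psi>0$ because each clique has length $\geqslant1$ so $\len{C_i}\geqslant1$ and the denominator sum is $\geqslant1$, hence $k_\psi\leqslant1$ and in particular finite and positive). This yields exactly~\eqref{eq:60}. Independence of $J$ is automatic: by Theorem~\ref{thr:6}\ref{item:14} the constant $k_\psi$ does not depend on $\alpha$, and the argument above produced the same value of $k_\psi$ starting from \emph{any} final component $J$; therefore the right-hand side of~\eqref{eq:60} has the same value for every final component $J$ of $\DSCP$. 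The main obstacle is the mild mismatch that the height function is sub-additive but not additive, so one cannot invoke~\eqref{eq:69} verbatim; the fix is to redo the short computation of the ergodic mean along the canonical exhaustive sequence $Y_n$ using the Markov chain ergodic theorem on the finite irreducible class $J$, exactly as in the proof of point~\ref{item:15}, and to take reciprocals at the very end. \qed
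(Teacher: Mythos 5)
Your proof is correct and follows essentially the same route as the paper: apply Theorem~\ref{thr:6}, point~\ref{item:14}, to the sub-additive height function, compute the ergodic mean along the exhaustive sequence~$Y_n=C_1\cdots C_n$ on a protection cylinder so the Markov chain of state-and-cliques stays in~$J$, invoke the Markov chain ergodic theorem on~$J$, and take the reciprocal at the end. Your bookkeeping is if anything slightly cleaner than the paper's, whose display~\eqref{eq:66} writes~$M\varphi(Y_j)=\len{Y_j}/\height{Y_j}$, which is literally the reciprocal of the ergodic mean~$\height{Y_j}/\len{Y_j}$, whereas you keep~$M\psi$ and~$1/M\psi$ consistently distinguished.
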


\begin{proof}
Let~$\varphi_\alpha(x)=\height x$.
We compute the limit of~$M\varphi$ using the exhaustive sequence given by~$Y_j=C_1\cdots C_j$:
\begin{align}
  \label{eq:66}
M\varphi(Y_j)&=  \frac{\len {Y_j}}{\height{Y_j}}=\frac{\len {Y_j}}j=\frac1j\sum_{k=1}^j\len{C_k}
\end{align}

Let~$J$ be a basic component of \DSCP.
We proceed along the same line of proof as in the proof of Theorem~\ref{thr:6}, point~\ref{item:15}.
Let~$(\alpha_0,c_0)$ be a state-and-clique of~$J$, and let~$x_0\in\M_{\alpha_0}$ be a protection of~$(\alpha_0,c_0)$.
All infinite trajectories~$\omega\in\up x_0$ have their cliques in~$J$ since~$J$ is final and~$C_1(\omega)=(\alpha_0,x_0)$.
But, on~$\up x_0$, the ergodic means~\eqref{eq:66} converge \as\ toward~$s$ defined in~\eqref{eq:60}.

Since the limit of ergodic means is \as\ constant according to Theorem~\ref{thr:6}, the result follows.
\end{proof}

\begin{remark}
  \label{rem:3}
  It is a combinatorial result that the quantity~$s$ defined in~\eqref{eq:60} is independent of the final component~$J$.
Yet, it is obtained with probabilistic arguments.

\end{remark}

\section{Additional examples}
\label{sec:applications}

\subsection{About the basic components of the \DSC}
\label{sec:an-example-with}

What does it represent for a concurrent system that its \DSC\ has several basic components? Without concurrency, hence if~$\Sigma$ is a free monoid, then the \DSC\ is strongly connected and coincides with the \DSCP; the presence of several basic components of \DSC\ is thus closely related to the concurrency features of the model.

Consider an infinite trajectory~$\omega\in\BM_\alpha$.
Almost surely, the cliques of~$\omega$ will fall within one of the basic components of~$\DSCP$.
Now consider a finite prefix~$x\leqslant\omega$ and~$\xi=x^{-1}\omega$.
Then the cliques of~$\xi$ will also fall with probability~$1$ within a basic components of~$\DSCP$, but maybe within an other one, as the following result shows.

\begin{proposition}
\label{prop:1}
  Let~$\alpha\in X$ be an initial state.
Then for~$\nu_\alpha$-almost every~$\omega\in\BM_\alpha$ and for every basic component~$J$ of\/~$\DSCP$, there is a prefix~$x\leqslant\omega$ such that all cliques of~$x^{-1}\omega$ belong to~$J$.
\end{proposition}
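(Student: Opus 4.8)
The plan is to reduce the statement to a property of the Markov chain of state-and-cliques, using the stopping-time machinery of Section~\ref{sec:ergod-prop-irred}. Fix $\alpha\in X$ and a basic component $J$ of $\DSCP$. By Corollary~\ref{cor:8}, relatively to $\nu_\alpha$ the Markov chain of state-and-cliques stays within $\DSCP$, and since $\DSCP$ has finitely many vertices, almost every trajectory $\omega$ eventually enters (and stays in) a single final component of $\DSCP$. The key point to establish is that, regardless of which final component $\omega$ settles in, there is almost surely a prefix $x\leqslant\omega$ after which the remaining trajectory $x^{-1}\omega$ lives entirely in $J$; in other words, every basic component is ``visited as a residual'' with probability one.

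First I would pick a state-and-clique $(\beta_0,d_0)\in J$ together with a protection $x_0\in\M_{\beta_0}$ of $(\beta_0,d_0)$, which exists by definition of stability. By Lemma~\ref{lem:11}(\ref{item:5}) and the fact that $J$ is final in $\DSCP$, every infinite trajectory $\zeta\in\up x_0\cap\BM_{\beta_0}$ has all its cliques in $J$: indeed $C_1(\zeta)=d_0$ forces the first state-and-clique into $J$, and $J$ being final means one cannot leave it while remaining among stable state-and-cliques, while Corollary~\ref{cor:3}/Corollary~\ref{cor:8} guarantee one does remain stable a.s. So it suffices to show that for $\nu_\alpha$-a.e.\ $\omega\in\BM_\alpha$ there is a finite prefix $x\leqslant\omega$ with $\alpha\cdot x=\beta_0$ and $x_0\leqslant x^{-1}\omega$.

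To produce such a prefix I would use a stopping time argument analogous to the one in the proof of Theorem~\ref{thr:4}. Since $\SS$ is irreducible, $\M_{\gamma,\beta_0}\neq\emptyset$ for every $\gamma\in X$, so fix, for each $\gamma$, a trace $w_\gamma\in\M_{\gamma,\beta_0}$, and set $v=x_0$. Pick a letter $a\in\Sigma$ and consider the iterated hitting times $(T^a)^n$, which are $\nu_\alpha$-a.s.\ finite by Lemma~\ref{lem:5}, together with the associated Markov chain $X_n=\alpha\cdot(T^a)^n$ of Lemma~\ref{lem:4}, which has a unique closed class; let $\alpha_0$ be a recurrent state of that class (Corollary~\ref{cor:6}), reached infinitely often a.s. At each of the infinitely many (random) times where the trajectory has reached state $\alpha_0$ via a prefix $p_n=(T^a)^{k_n}$, the residual trajectory $p_n^{-1}\omega$ lies in $\BM_{\alpha_0}$; by the Strong Markov property \eqref{eq:43}, conditionally on $\F$ at that time the residual trajectory is distributed as $\nu_{\alpha_0}$, which gives positive mass $\nu_{\alpha_0}(\up(w_{\alpha_0}v))>0$ (by \eqref{eq:50}, since $w_{\alpha_0}v\in\M_{\alpha_0}$) to the event ``$w_{\alpha_0}v\leqslant p_n^{-1}\omega$''. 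A conditional Borel--Cantelli / Lévy zero--one argument along these infinitely many independent-looking trials then forces that event to occur for some $n$, a.s. When it does, take $x=p_n w_{\alpha_0}$: then $\alpha\cdot x=\beta_0$ and $x_0=v\leqslant x^{-1}\omega$, so all cliques of $x^{-1}\omega$ are in $J$.

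The main obstacle is the last step: turning ``each of infinitely many residual trials independently has a fixed positive success probability'' into ``success happens a.s.'' in a way that is rigorous with the custom stopping times. The cleanest route is to define a single stopping time $S$ that, at each successful return to $\alpha_0$, checks whether $w_{\alpha_0}v$ prefixes the residual, and to iterate $S$; the Strong Markov property makes the successive ``failure'' events a.s.\ have product probability $\prod(1-c)=0$ with $c=\nu_{\alpha_0}(\up(w_{\alpha_0}v))>0$, so some iterate succeeds. One must also check that $S$ and its iterates are $\nu_\alpha$-a.s.\ finite, which follows from a.s.\ finiteness of $(T^a)^n$ (Lemma~\ref{lem:5}) and a.s.\ infinitely many returns to $\alpha_0$ (Corollary~\ref{cor:6}), exactly as in the proof of Theorem~\ref{thr:4}. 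Finally, taking a countable intersection over the finitely many basic components $J$ preserves the a.s.\ statement, yielding the proposition.
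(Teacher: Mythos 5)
Your proposal is correct and follows the same route as the paper's proof: pick a state-and-clique $(\beta_0,d_0)\in J$ with a protection $x_0$, use the finality of $J$ in $\DSCP$ together with Corollaries~\ref{cor:3} and~\ref{cor:8} to see that an a.s.\ residual trajectory starting at $(\beta_0,d_0)$ stays in $J$, and invoke the stopping-time/Strong-Markov machinery of Section~\ref{sec:ergod-prop-irred} to produce, a.s., a prefix $x\leqslant\omega$ with $\alpha\cdot x=\beta_0$ and $x_0\leqslant x^{-1}\omega$. In fact you flesh out considerably more of the ``it is routine to check'' step than the paper does (returns to a recurrent state $\alpha_0$ of the chain $X_n=\alpha\cdot(T^a)^n$, a conditional Borel--Cantelli argument, and the finite intersection over basic components), which is exactly the content the paper leaves implicit.
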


\begin{proof}
Let~$(\beta,c)$ be a state-and-clique of~$J$, and let~$y\in\M_\beta$ be one of its protections.
Then, working with the stopping times as in~\S~\ref{sec:ergod-prop-irred}, it is routine to check that for~$\nu_\alpha$-\as\ every~$\omega\in\BM_\alpha$, there is a prefix~$x\leqslant\omega$ such that~$\alpha\cdot x=\beta$ and~$y\leqslant x^{-1}\omega$ (note:
that does not mean that the cliques~$(c_i)_{i\geqslant1}$ of~$\omega$ satisfy~$c_1\ldots c_n=x$ for some integer~$n\geqslant1$).
Then~$\xi=x^{-1}\omega$ has~$(\beta,c)$ as its first state-and-clique, and then all the state-and-cliques of~$(\beta,\xi)$ belong to~$J$ since~$J$ is final in \DSCP.
\end{proof}

Hence, the presence of several basic components of~$F$ reveals an artefact of the combinatorial encoding by state-and-cliques;
indeed, a same infinite trajectory is essentially ``seen'' from within all the basic components, according to the ``moment'' one decides to start the encoding.

\begin{figure}[t]
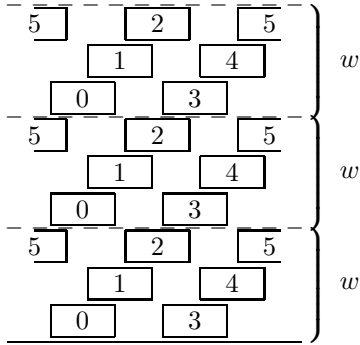

  \centering
\begin{gather*}
\xy
<.1em,0em>:
0="B",
"B"+0="G",
"G"+(12,6)*{0},
"G";"G"+(24,0)**@{-};"G"+(24,12)**@{-};"G"+(0,12)**@{-};"G"**@{-},
"B"+(42,0)="G",
         "G"+(12,6)*{3},
"G";"G"+(24,0)**@{-};"G"+(24,12)**@{-};"G"+(0,12)**@{-};"G"**@{-},
"B"+(14,14)="G";         
         "G"+(12,6)*{1},
"G";"G"+(24,0)**@{-};"G"+(24,12)**@{-};"G"+(0,12)**@{-};"G"**@{-},
"B"+(56,14)="G";         
         "G"+(12,6)*{4},
"G";"G"+(24,0)**@{-};"G"+(24,12)**@{-};"G"+(0,12)**@{-};"G"**@{-},
"B"+(28,28)="G";         
         "G"+(12,6)*{2},
"G";"G"+(24,0)**@{-};"G"+(24,12)**@{-};"G"+(0,12)**@{-};"G"**@{-},
"B"+(70,28)="G";         
         "G"+(12,6)*{5},
         "G"+(16,0);"G"**@{-};"G"+(0,12)**@{-};"G"+(16,12)**@{-},
         "B"+(-6,28)="G";
         "G"+(0,6)*{5},
         "G";"G"+(12,0)**@{-};"G"+(12,12)**@{-};"G"+(0,12)**@{-},
(0,42)="B",
"B"+0="G",
"G"+(12,6)*{0},
"G";"G"+(24,0)**@{-};"G"+(24,12)**@{-};"G"+(0,12)**@{-};"G"**@{-},
"B"+(42,0)="G",
         "G"+(12,6)*{3},
"G";"G"+(24,0)**@{-};"G"+(24,12)**@{-};"G"+(0,12)**@{-};"G"**@{-},
"B"+(14,14)="G";         
         "G"+(12,6)*{1},
"G";"G"+(24,0)**@{-};"G"+(24,12)**@{-};"G"+(0,12)**@{-};"G"**@{-},
"B"+(56,14)="G";         
         "G"+(12,6)*{4},
"G";"G"+(24,0)**@{-};"G"+(24,12)**@{-};"G"+(0,12)**@{-};"G"**@{-},
"B"+(28,28)="G";         
         "G"+(12,6)*{2},
"G";"G"+(24,0)**@{-};"G"+(24,12)**@{-};"G"+(0,12)**@{-};"G"**@{-},
"B"+(70,28)="G";         
         "G"+(12,6)*{5},
         "G"+(16,0);"G"**@{-};"G"+(0,12)**@{-};"G"+(16,12)**@{-},
         "B"+(-6,28)="G";
         "G"+(0,6)*{5},
         "G";"G"+(12,0)**@{-};"G"+(12,12)**@{-};"G"+(0,12)**@{-},
(0,84)="B",
"B"+0="G",
"G"+(12,6)*{0},
"G";"G"+(24,0)**@{-};"G"+(24,12)**@{-};"G"+(0,12)**@{-};"G"**@{-},
"B"+(42,0)="G",
         "G"+(12,6)*{3},
"G";"G"+(24,0)**@{-};"G"+(24,12)**@{-};"G"+(0,12)**@{-};"G"**@{-},
"B"+(14,14)="G";         
         "G"+(12,6)*{1},
"G";"G"+(24,0)**@{-};"G"+(24,12)**@{-};"G"+(0,12)**@{-};"G"**@{-},
"B"+(56,14)="G";         
         "G"+(12,6)*{4},
"G";"G"+(24,0)**@{-};"G"+(24,12)**@{-};"G"+(0,12)**@{-};"G"**@{-},
"B"+(28,28)="G";         
         "G"+(12,6)*{2},
"G";"G"+(24,0)**@{-};"G"+(24,12)**@{-};"G"+(0,12)**@{-};"G"**@{-},
"B"+(70,28)="G";         
         "G"+(12,6)*{5},
         "G"+(16,0);"G"**@{-};"G"+(0,12)**@{-};"G"+(16,12)**@{-},
         "B"+(-6,28)="G";
         "G"+(0,6)*{5},
         "G";"G"+(12,0)**@{-};"G"+(12,12)**@{-};"G"+(0,12)**@{-},
(-16,-2);(94,-2)**@{-},
(-16,41);(94,41)**@{--},
(-16,83);(94,83)**@{--},
(-16,125);(94,125)**@{--},
(100,0)="G";
"G"+(12,20)*{w},
"G";"G"+(0,41)**\frm{)},
(100,42)="G";
"G"+(12,20)*{w},
"G";"G"+(0,41)**\frm{)},
(100,84)="G";
"G"+(12,20)*{w},
"G";"G"+(0,41)**\frm{)},
\endxy
\end{gather*}
  \caption{The trace~$w^3$ with~$w=031425$ represented as a heap of pieces with a cyclic base (identify half-pieces labeled~`$5$' with the same altitude)}
  \label{fig:qwpokqwdqmqaszx}
\end{figure}

\paragraph{An example with several non-isomorphic basic components for the \DSCP.}

Consider the irreducible trace monoid~$\M = \M(\Sigma,I)$ where~$\Sigma = \mathbb{Z}/6\mathbb{Z}$ and~$I = \{(a,b) \tq a-b \notin \{-1,0,1\}\}$.
Let~$w$ be the trace~$031425$, and let~$\sim$ be the equivalence relation on traces~$x,y \in \M$ defined by~$x \sim y$ when there exist integers~$k,\ell \geqslant 0$ such that~$w^k x = w^\ell y$;
each equivalence class contains a unique trace~$x$ such that~$w \not\leqslant x$.

Let~$\W$ be the set of traces~$x$ that left-divide some power of~$w$, \ie,~$\W = \{x \in \M \tq \exists k \geqslant 0 \quad x \leqslant w^k\}$.
Each equivalence class for~$\sim$ that contains an element of~$\W$ is included in~$\W$; hence, let~$X$ be the set of these equivalence classes, \ie,~$X = \mathcal{W} / \sim$.

The set~$X$ is finite.
More precisely, for every~$x\in\W$, there exists~$x'\in\W$ such that~$x\sim x'$ and~$x'\leqslant w^2$.
To prove it, let~$x\in\W$ such that~$x\leqslant w^3$.
Without loss of generality, we may assume that~$w\not\leqslant x$.
Then either~$\height x\leqslant 6$, and then~$x\leqslant w^2$, or~$\height x>6$.
In the latter case,~$x$~must contain the occurrence of~$0$ or of~$3$ of the seventh layer of~$w^2$, as depicted on Fig.~\ref{fig:qwpokqwdqmqaszx}.
But then it is apparent on Fig.~\ref{fig:qwpokqwdqmqaszx} that~$w\leqslant x$, a contradiction; hence~$x\leqslant w^2$, as claimed.

Moreover, the relation~$\sim$ is stable by right product, \ie, if~$x \sim y$, then~$x z \sim y z$.
Consequently, denoting by~$[x]$ the equivalence class of a trace~$x$, we can let~$\M$ act on~$X$ by setting~$[x] \cdot a = [x a]$ when~$x a \in \W$ and~$[x] \cdot a = \bot$ when~$x a \notin \W$.
This yields the concurrent system~$\mathcal{S} = (\M,X)$ depicted on Fig.~\ref{fig:kljaskh}, each class being represented by its unique element of smallest length.

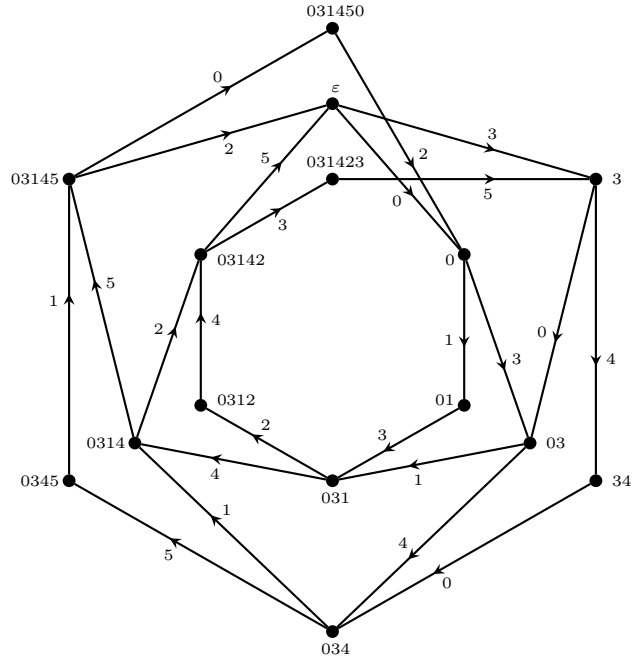
\begin{figure}
\centering
\begin{tikzpicture}[decoration={markings,mark=at position 0.625 with {\arrow{stealth} \node at (-0.1,\p 0.2) {\tiny \c};}}]
\foreach[count=\x] \a/\r/\l/\d/\y in {0/3/\varepsilon/south/1,60/2/0/east/-2,60/4/3/west/0,120/2/01/east/2,120/4/34/west/0,
120/3/03/west/0,180/2/031/north/-1,180/4/034/north/-1,240/2/0312/west/2,240/4/0345/east/0,
240/3/0314/east/0,300/2/03142/west/-2,300/4/03145/east/0,0/2/031423/south/1,0/4/031450/south/1}{
 \node[circle,fill=black,draw=black,thick,inner sep=0pt,minimum size=4pt] (n\x) at (90-\a:\r) {};
 \node[anchor=\d,yshift=\y pt] at (90-\a:\r) {\tiny~$\l$};
}
\foreach \a/\b/\c/\p in {1/2/0/-,1/3/3/+,2/6/3/+,3/6/0/-,6/7/1/+,6/8/4/-,7/11/4/+,8/11/1/-,11/12/2/+,11/13/5/-,12/1/5/+,13/1/2/-,
2/4/1/-,4/7/3/-,7/9/2/-,9/12/4/-,12/14/3/-,14/3/5/-,
3/5/4/+,5/8/0/+,8/10/5/+,10/13/1/+,13/15/0/+,15/2/2/+}{
 \draw[thick,postaction={decorate}] (n\a) -- (n\b);
}
\end{tikzpicture}
\caption{Action of~$(\M,X)$ where each class in~$X$ is represented by its unique element~$\leqslant w^2$}
\label{fig:kljaskh}
\end{figure}

From this representation, one sees readily that~$\mathcal{S}$ satisfies all the criteria for being irreducible and that, for each state~$\alpha \in X$, the only clique~$c \in \M$ for which the state-and-clique~$(\alpha,c)$ admits a protection is the clique containing all the letters~$a \in \Sigma$ such that~$\alpha \cdot a \neq \bot$.
Thus, its \DSCP{} is represented on Fig.~\ref{fig:pokjdwfn}: its basic components are cycles of lengths~$3$ and~$6$.

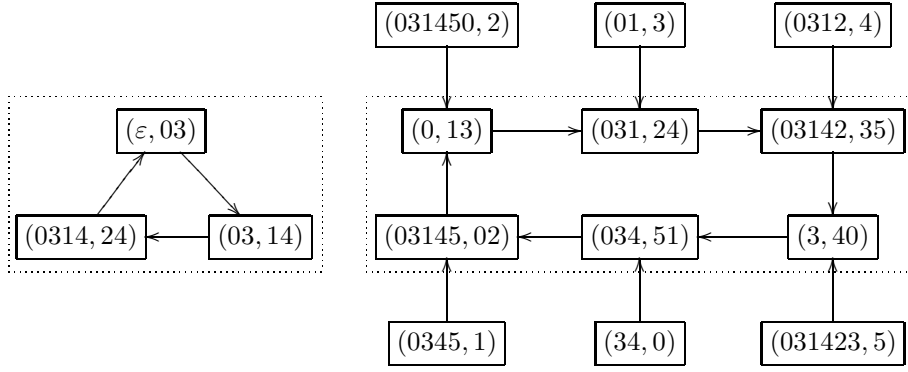
\begin{figure}
\begin{gather*}
  \xymatrix{
&&    *+[F]{(031450,2)}\ar[d]&*+[F]{(01,3)}\ar[d]&*+[F]{(0312,4)}\ar[d]\\
 \save[]+<3em,0em>*+[F]{(\ve,03)}\ar@{<-}[d]\ar[dr]\restore&&    *+[F]{(0,13)}\ar[r]&*+[F]{(031,24)}\ar[r]&*+[F]{(03142,35)}\ar[d]\\
 *+[F]{(0314,24)}      &*+[F]{(03,14)}\ar[l]&    *+[F]{(03145,02)}\ar[u]&*+[F]{(034,51)}\ar[l]&*+[F]{(3,40)}\ar[l]\\
&&    *+[F]{(0345,1)}\ar[u]&*+[F]{(34,0)}\ar[u]&*+[F]{(031423,5)}\ar[u]
\save{"3,3"+<-3em,-1.3em>}.{"2,5"+<3em,1.3em>}*[F.]\frm{}\restore
\save{"2,1"+<-2.7em,1.3em>}.{"3,2"+<2.3em,-1.3em>}*\frm{.}\restore  
  }
\end{gather*}

  \caption{The \DSCP\ with two basic components (framed)}
  \label{fig:pokjdwfn}
\end{figure}

\begin{remark}
  \label{rem:2}
  On this example, the probabilistic structure is trivial since, for every state~$\alpha$, there is a unique infinite trajectory starting from~$\alpha$.
\end{remark}

\subsection{Doubled trace monoids}
\label{sec:doubled-trace-monoid}

The structure of a given concurrent system can give short ways of describing the uniform measure.
Doubled trace monoids are an example of this kind.

Consider an irreducible trace monoid~$\M=\M(\Sigma,I)$, put~$X=\C$ the set of cliques of~$\M$, and define a mapping~$X\times\Sigma\to X\cup\{\bot\},\quad (\gamma,a)\mapsto\gamma\cdot a$ as follows:
\begin{gather*}
\gamma\cdot a=
\begin{cases}
\gamma\cup\{a\},&\text{if~$a\notin\gamma$ and~$\gamma\cup\{a\}\in\C$}\\
\gamma\setminus\{a\},&\text{if~$a\in\gamma$}\\
\bot,&\text{otherwise}
\end{cases}
\end{gather*}
This mapping extends in a unique way to an action~$X\times\M\to X\cup\{\bot\}$ making~$(\M,X)$ an irreducible concurrent system.

For instance, if~$\Sigma=\{a_0,\ldots,a_4\}$ with~$a_ia_j=a_ja_i$ when~$i-j\neq\pm1\mod 5$, the concurrent system corresponds to the so-called ``five dining philosophers'' model introduced by Dijkstra in concurrency theory~\cite{dijkstra71}.

The probabilistic valuation~$f_\alpha(x)=\rho^{\len x}\Delta(\alpha,\alpha\cdot x)$ associated with the uniform measure has a simple expression for these models:
\begin{compactitem}
\item[(\dag)] Let~$r$ be the root of smallest modulus of the Möbius polynomial of the monoid~$\M$.
Then~$\rho=\sqrt r$ and~$\Delta(\alpha,\beta)=r^{\frac12(\len\beta-\len\alpha)}$.
\end{compactitem}

\begin{proof}
Let~$\nu$ be the uniform measure of the concurrent system, and let~$f$ be the corresponding probabilistic valuation.
Let~$\gamma\in X$, hence~$\gamma$ is a clique of~$\M$.
It follows from Corollary~\ref{cor:7} that~$\nu_\gamma(\up\gamma)=1$;
indeed, otherwise some letter would be never played.
Therefore~$f_\gamma(\gamma)=1$, and since~$\gamma\cdot\gamma=\ve$ in the action, it yields~$\rho^{\len\gamma}\Delta(\gamma,\ve)=1$ and therefore, using the cocycle property of~$\Delta$:
$\Delta(\ve,\gamma)=\rho^{\len\gamma}$, and finally~$\Delta(\gamma,\gamma')=\rho^{\len{\gamma'}-\len\gamma}$.
As a result,~$f_\ve(\gamma)=\rho^{2\len\gamma}$.

It suffices now to prove that~$\rho=\sqrt r$.
Let~$\psi$ be the Möbius transform of the uniform valuation~$\varphi(x)=\rho^{2\len x}$ of the monoid~$\M$.
Given the form already found for the cocycle~$\Delta$, the condition~\eqref{eq:11} for~$f$ being probabilistic, used at state~$\ve$, shows that~$\rho^2$ satisfies:
\begin{align*}
\psi(\ve)&=0
&\forall \gamma\in\Cstar\quad\psi(\gamma)&>0
\end{align*}
But only the root of smallest modulus~$r$ has this property (short proof:
by the uniqueness of Theorem~\ref{thr:2}, since it allows to construct a uniform measure on~$\BM$).
Hence,~$r=\rho^2$, which completes the proof of~(\dag).
\end{proof}

\printbibliography
\end{document}